\newtheorem{theo}{\textbf Theorem}[section]
\newtheorem{lem}{\textbf Lemma}[section]
\newtheorem{re}{\textbf Remark}[section]
\newcommand{\lbl}[1]{\label{#1}}
\newcommand{\be}{\begin{equation}}
\newcommand{\ee}{\end{equation}}
\newcommand\bes{\begin{eqnarray}}
\newcommand\ees{\end{eqnarray}}
\newcommand{\bess}{\begin{eqnarray*}}
\newcommand{\eess}{\end{eqnarray*}}
\begin{document}

\title[Two Diffusive SIS Epidemic Models with Mass Action Infection Mechanism]{Novel Spatial Profiles of Population Distribution of Two Diffusive SIS Epidemic Models with Mass Action Infection Mechanism and Small Movement Rate for the Infected Individuals}
\author[R. Peng, Z.-A. Wang, G. Zhang and M. Zhou
]{Rui Peng, Zhi-an Wang, Guanghui Zhang and Maolin Zhou}

\thanks{{\bf R. Peng}: Department of Mathematics, Zhejiang Normal University, Jinhua, Zhejiang, 321004, China. {\bf Email}: {\tt pengrui\_seu@163.com}}

\thanks{{\bf Z.-A. Wang}: Department of Applied Mathematics, The Hong Kong Polytechnic University,
Hung Hom, Kowloon, Hong Kong. {\bf Email}: {\tt mawza@polyu.edu.hk}}

\thanks{{\bf G. Zhang}: School of Mathematics and Statistics, Huazhong University of Science and Technology, Wuhan,
430074, China. {\bf Email}: {\tt guanghuizhang@hust.edu.cn}}

\thanks{{\bf M. Zhou}: Chern Institute of Mathematics and LPMC, Nankai University, Tianjin, 300071, China. {\bf Email}: {\tt zhouml123@nankai.edu.cn}}

\thanks{R. Peng was partially supported by NSF of China (Nos. 12271486, 12171176), Z.-A. Wang was partially supported by the Hong Kong Scholars Program (Project ID P0031250) and an internal grant from the Hong Kong Polytechnic University (Project ID P0031013), G. Zhang was partially supported by NSF of China (No. 12171176, 11971187) and the Fundamental Research Funds for the Central Universities (No. 5003011008), and M. Zhou was partially supported by the Nankai Zhide Foundation and NSF of China (No. 11971498).}

\date{\today}
\keywords{Reaction-diffusion SIS epidemic model; mass action infection mechanism; spatial profile; small movement rate; heterogeneous environment.}

\subjclass[2010]{35J57, 35B40, 35Q92, 92D30}

\begin{abstract} In this paper, we are concerned with two SIS epidemic reaction-diffusion models with mass action infection mechanism of the form $SI$, and study the spatial profile of population distribution as the movement rate of the infected individuals is restricted to be small. For the model with a constant total population number, our results show that the susceptible population always converges to a positive constant which is indeed the minimum of the associated risk function, and the infected population either concentrates at the isolated highest-risk points or aggregates  only on the highest-risk intervals  once  the highest-risk locations contain at least one interval. In sharp contrast, for the model with a varying total population number which is caused by the recruitment of the susceptible individuals  and death of the infected individuals, our results reveal that the susceptible population converges to a positive function which is non-constant unless the associated risk function is constant, and the infected population may concentrate only at some isolated highest-risk points, or aggregate at least in a neighborhood of the highest-risk locations or occupy the whole habitat, depending on the behavior of the associated risk function and even its smoothness at the highest-risk locations. Numerical simulations are performed to support and complement our theoretical findings.
\end{abstract}
\maketitle

\setcounter{equation}{0}

\section{Introduction and existing results}

The outbreak of the novel coronavirus disease 2019 (COVID-19) continues to spread rapidly around the world, and it
has caused tremendous impacts on public health and the global economy. As it is commonly recognized, population movement is a significant factor in the spread of many reported infectious diseases including COVID-19 \cite{Bal,BH,Jia}, and the lockdown and quarantine has turned out to be one of the most effective measures to reduce or even eliminate the infection \cite{KYG,TLL}. On the other hand, the importance of
the population heterogeneity has also been observed in the complicated dynamical behaviour of the transmission of COVID-19 \cite{BBT1,BBT2,Di}.

To gain a deeper understanding of the impact of population movement and heterogeneity on the transmission of epidemic diseases from a mathematically theoretical viewpoint, in the present work we are concerned with two SIS reaction-diffusion systems with mass action infection mechanism in a heterogeneous environment. We aim to study the spatial profile of population distribution as the movement rate of the infected individuals is controlled to be sufficiently small. Such kind of information may be useful for decision-makers to predict the pattern of disease occurrence and henceforth to conduct more effective strategies of disease eradication. The mass action infection mechanism was first proposed in the seminal work of Kermack and McKendrick \cite{KM1}, in which the disease transmission was assumed to be governed by a bilinear incidence function $SI$ (one may also refer to \cite{KM911, KM912, KM913} or \cite{PW}). The systems under consideration in this paper are possibly the simplest yet basic SIS epidemic models.

The first model we will deal with in this work is the following coupled reaction-diffusion equations in one-dimensional space:
\begin{equation}\label{tSIS}
\begin{cases}
S_t-d_{S}S_{xx}=-\beta(x) SI+\gamma(x) I,
\ \ \ &0<x<L,\ \ t>0,\vspace{1mm} \\
I_t-d_{I}I_{xx}=\beta(x) SI-\gamma(x) I,
\ \ \ &0<x<L,\ \ t>0,\vspace{1mm} \\
S_{x}=I_{x}=0,
\ \ \ &x=0,\,L,\ \ t>0,\vspace{1mm} \\
S(x,0)=S_0(x)\ge 0,\ I(x,0)=I_0(x)\ge,\not\equiv 0,\ &0<x<L.
\end{cases}
\end{equation}
Here, $S(x,t)$ and $I(x,t)$ are respectively the population density of the susceptible and infected individuals at position $x \in [0,L]$ and time $t$; the homogeneous Neumann boundary condition means that no population flux crosses the boundary $x=0,\,L$; $d_S$ and $d_I$ are positive constants measuring the motility of susceptible and infected individuals, respectively; and the functions $\beta$ and $\gamma$ are H\"older continuous positive functions in $[0,L]$  representing the disease transmission rate and the disease recovery rate, respectively.

Integrating the sum of the equations of \eqref{tSIS}, combined with the homogeneous Neumann boundary value conditions, we observe that
$$
\int^{L}_{0}(S(x,t)+I(x,t))\,{\rm d}x=\int^{L}_{0}(S_0(x)+I_0(x))\,{\rm d}x=:N,\ \ \ \forall t\ge 0.
$$
Thus, the total population number in \eqref{tSIS} is conserved all the time.

The system \eqref{tSIS} was investigated in the recent works \cite{DW,WZ,WJL}; in particular, when
the movement of either the susceptible or infected population is restricted to be slow, the authors explored the profile of the spatial distribution of the disease modelled by \eqref{tSIS}. The understanding of such a profile amounts to determine the behavior of the so-called endemic equilibrium with respect to the small diffusion rate $d_S$ or $d_I$. The endemic equilibrium of \eqref{tSIS} is a positive steady state solution, which satisfies the following elliptic system:
\begin{equation}\label{SIS}
\begin{cases}
-d_{S}S_{xx}=-\beta(x) SI+\gamma(x) I,
\ \ \ &0<x<L,\vspace{1mm} \\
-d_{I}I_{xx}=\beta(x) SI-\gamma(x) I,
\ \ \ &0<x<L,\vspace{1mm} \\
S_{x}=I_{x}=0,
\ \ \ &x=0,L,\vspace{1mm} \\
\displaystyle
\int^{L}_{0}(S(x)+I(x))\,{\rm d}x=N.
\end{cases}
\end{equation}

According to \cite{DW,WZ,WJL}, if $\min_{x\in[0,L]}\frac{\gamma(x)}{\beta(x)}<\frac{N}{L}$, for any small $d_I>0$,
\eqref{SIS} admits at least one positive solution $(S,I)$, which is called an endemic equilibrium (EE for abbreviation) in terms of epidemiology; moreover, $(S,I)$ satisfies $S,\,I\in C^2([0,L])$ and $S,\,I>0$ on $[0,L]$.

As remarked in \cite{WZ}, it is a challenging problem to study the spatial profile of EE of \eqref{SIS} with respect to the small movement rate $d_I$ of the infected population; in \cite{WJL}, the authors provided a first result in this research direction. Indeed,
they proved the following conclusion.

\begin{theo}\cite[Theorem B]{WJL}\label{mainthm1} Assume that $\min_{x\in[0,L]}\frac{\gamma(x)}{\beta(x)}<\frac{N}{L}$. Then as $d_I\rightarrow0$, the EE $(S,I)$ of \eqref{SIS} satisfies (up to a sequence of $d_I$)
that $S\to \hat S$ uniformly on $[0,L]$, where $\hat S\in C([0,L])$ with
$\min_{[0,L]}\frac{\gamma(x)}{\beta(x)}\leq\hat S(x)\leq\max_{[0,L]}\frac{\gamma(x)}{\beta(x)}$,
and $I\to\mu$ weakly for some Radon measure $\mu$ with nonempty support in the sense of
 \bes\label{wks}
 \int_0^L I(x)\zeta(x){\rm d}x\longrightarrow\int_{[0,L]}\zeta(x)\mu({\rm d}x),\ \ \ \forall \zeta\in C([0,L]).
 \ees
\end{theo}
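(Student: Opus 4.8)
The plan is to derive all the needed information from a priori estimates that are uniform in $d_I$, and then pass to the limit along a subsequence. The structural starting point is the first equation of \eqref{SIS}, rewritten as $-d_S S_{xx}+\beta(x)IS=\gamma(x)I$. Since the EE satisfies $I>0$ on $[0,L]$, I would evaluate at an extremum of $S$: at an interior maximum $S_{xx}\le0$ forces $(\gamma-\beta S)I\ge0$, hence $S\le\gamma/\beta$ there, while the Neumann boundary points are handled by the one-sided Taylor expansion (using $S_x=0$ there, so that a boundary maximum still gives $S_{xx}\le0$), and symmetrically at a minimum. This yields the pointwise two-sided bound
\[
\min_{[0,L]}\frac{\gamma}{\beta}\le S(x)\le\max_{[0,L]}\frac{\gamma}{\beta}\qquad\text{for every }x\in[0,L],
\]
valid for \emph{all} $d_I>0$. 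A single further integration of the same equation from $0$, using $S_x(0)=0$, gives $S_x(x)=-d_S^{-1}\int_0^x(\gamma-\beta S)I\,{\rm d}t$; since the bound above makes $|\gamma-\beta S|\le M$ for a constant $M$, and since the mass constraint together with $S>0$ gives $\int_0^L I=N-\int_0^L S\le N$, we obtain the uniform gradient bound $\|S_x\|_\infty\le MN/d_S$, again independent of $d_I$.

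With these estimates in hand I would run the two compactness arguments. For the infected population, the measures $I(x)\,{\rm d}x$ have uniformly bounded total mass ($\le N$), so weak-$*$ compactness of bounded Radon measures on $[0,L]$ produces a subsequence $d_I\to0$ along which $I\,{\rm d}x\rightharpoonup\mu$ in precisely the tested sense of \eqref{wks}. For the susceptible population, the uniform $L^\infty$ and Lipschitz bounds let Arzel\`a--Ascoli furnish, after passing to a further subsequence, the uniform convergence $S\to\hat S$ with $\hat S\in C([0,L])$; the two-sided bound passes to this uniform limit, giving $\min_{[0,L]}\gamma/\beta\le\hat S\le\max_{[0,L]}\gamma/\beta$ as claimed.

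The remaining, and genuinely delicate, point is that $\mu$ has nonempty support, i.e.\ that the disease does not vanish in the limit. I would argue by contradiction: if $\int_0^L I\to0$ along the subsequence, then the gradient identity above gives $\|S_x\|_\infty\le Md_S^{-1}\int_0^L I\to0$, so $S$ converges uniformly to its average $L^{-1}\int_0^L S=N/L-L^{-1}\int_0^L I\to N/L$, whence $S\to N/L$ uniformly. Now, because $I>0$ solves $-d_I I_{xx}-(\beta S-\gamma)I=0$, the number $0$ is exactly the principal (Neumann) eigenvalue of $-d_I\partial_{xx}-(\beta S-\gamma)$. As $S\to N/L$ uniformly its potential converges in $L^\infty$ to $\beta N/L-\gamma$, and the hypothesis $\min_{[0,L]}\gamma/\beta<N/L$ yields $\max_{[0,L]}(\beta N/L-\gamma)>0$, so the principal eigenvalue of $-d_I\partial_{xx}-(\beta N/L-\gamma)$ is bounded above by a fixed negative constant for all small $d_I$ (it is monotone in $d_I$ and tends to $-\max_{[0,L]}(\beta N/L-\gamma)<0$). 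By the $L^\infty$-Lipschitz dependence of the principal eigenvalue on the potential, the eigenvalue $0$ above would then have to be negative, a contradiction. Hence $\liminf_{d_I\to0}\int_0^L I>0$ and $\mu\neq0$.

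I expect this last step to be the main obstacle: all the preceding bounds are soft and uniform, whereas ruling out extinction requires the quantitative instability of the disease-free state encoded in the threshold $\min_{[0,L]}\gamma/\beta<N/L$, which is cleanest to extract through the principal-eigenvalue comparison. By contrast, the finer issues of \emph{where} $\mu$ concentrates and of the precise value of $\hat S$ are beyond the present statement and would require the delicate rescaling/spike analysis carried out subsequently.
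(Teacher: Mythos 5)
This statement is not proved in the paper at all: it is quoted verbatim from \cite{WJL} (Theorem B there), so there is no in-paper proof to compare your argument against; the paper simply imports it and refines it later into Theorem \ref{th2.1}. Judged on its own merits, your proof is correct and self-contained. Your two-sided bound on $S$ (maximum principle at interior extrema, one-sided Taylor expansion at the Neumann boundary) is exactly the estimate $k_{min}\leq S\leq \max_{[0,L]}k$ that the paper imports from \cite{WJL} and records as \eqref{2.1}; your gradient identity $S_x(x)=-d_S^{-1}\int_0^x(\gamma-\beta S)I\,{\rm d}t$ combined with $\int_0^L I\leq N$ gives a $d_I$-independent Lipschitz bound, after which Arzel\`a--Ascoli and weak-$*$ compactness of bounded nonnegative Radon measures deliver the two convergences in precisely the sense of \eqref{wks}. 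You also correctly identified, and correctly handled, the only genuinely delicate point, namely that $\mu\neq 0$: if $\mu([0,L])=0$ then $\int_0^L I\to 0$ along the subsequence, your gradient bound forces $S\to N/L$ uniformly, and the identity $\lambda_1(d_I,\gamma-\beta S)=0$ (positivity of $I$ plus uniqueness of the principal eigenvalue --- the same identity the paper uses at the start of its proof of Theorem \ref{th2.1}) then contradicts, for small $d_I$, the fact that $\lambda_1(d_I,\gamma-\beta N/L)\to\min_{[0,L]}(\gamma-\beta N/L)<0$ given by Lemma \ref{a2.1}, together with the $L^\infty$-Lipschitz dependence of $\lambda_1$ on the potential; the hypothesis $k_{min}<N/L$ enters exactly and only here, as it should. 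In short, your route reconstructs the cited result using the same toolkit (the a priori bound, the eigenvalue identity, Lemma \ref{a2.1}, measure-theoretic compactness) that this paper itself deploys when sharpening Theorem \ref{mainthm1} into Theorem \ref{th2.1}, with the eigenvalue-comparison exclusion of extinction supplying the part the paper leaves to \cite{WJL}.
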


Obviously, Theorem \ref{mainthm1} does not give a precise description for $\hat S$ and $\mu$ and hence the spatial profile of the susceptible and infected populations remains obscure. From the aspect of disease control, it becomes imperative to know an informative behavior of $\mu$. In this paper, we manage to give a satisfactory result on the profile of $\hat S$ and $\mu$.

In \eqref{tSIS}, some important factors such as the death and recruitment rates of population are ignored so that the total population number is a constant. In order to take into account the death and recruitment rates of population, the following reaction-diffusion epidemic system was proposed in \cite{LPW1}:
 \begin{equation}
 \left\{ \begin{array}{llll}
  S_t-d_SS_{xx}=\Lambda(x)-S-\beta(x)SI+\gamma(x)I,&0<x<L,\,t>0,\\
  I_t-d_II_{xx}=\beta(x)SI-\left[\gamma(x)+\eta(x)\right] I,&0<x<L,\,t>0,\\
  S_{x}=I_{x}=0,&x=0,L,\,t>0,\\
  S(x,0)=S_0(x)\geq0,\,I(x,0)= I_0(x)\geq,\not\equiv 0, &0<x<L.
 \end{array}\right.
 \label{model}
 \end{equation}
The recruitment term of the susceptible population is represented by the function $\Lambda(x)-S$ so that the susceptible is subject to the linear growth/death (\cite{AM, Het2}); $\eta(x)$ accounts for the death rate of the infected. Here, $\Lambda,\,\eta$ are assumed to be positive H\"{o}lder continuous functions on $[0,L]$. All other parameters have the same interpretation as in \eqref{tSIS}.

It is easily seen that the following elliptic problem
\bes\label{dfe}
 -d_SS_{xx}=\Lambda(x)-S,\ \ 0<x<L;\ \  \ S_{x}(0)=S_{x}(L)=0
\ees
admits a unique positive solution $\tilde S$. Then $(\tilde S,0)$ is a unique disease-free equilibrium of \eqref{model}. An EE of \eqref{model} satisfies the following ODE system:
\begin{equation}\label{SIS-2}
\begin{cases}
-d_{S}S_{xx}=\Lambda(x)-S-\beta(x)SI+\gamma(x)I,
\ \ \ &0<x<L,\vspace{1mm} \\
-d_{I}I_{xx}=\beta(x)SI-\left[\gamma(x)+\eta(x)\right] I,
\ \ \ &0<x<L,\vspace{1mm} \\
S_{x}=I_{x}=0,
\ \ \ &x=0,L.
\end{cases}
\end{equation}
As one of the main results of \cite{LPW1},  the following conclusion on the profile of EE of \eqref{SIS-2} with respect to small $d_I$ was established.

\begin{theo}\cite[Theorem 3.2]{LPW1}\label{mainthm2} Assume that the set $\{x\in [0,L]: \ \beta(x)\tilde S(x)>\gamma(x)+\eta(x)\}$ is non-empty. As $d_I\rightarrow0$, then any EE $\left(S,I\right)$
of \eqref{SIS-2} satisfies (up to a subsequence of $d_I$) that
$S\to \hat S\ \ \mbox{uniformly on}\ [0,L],$ where $\hat S\in C([0,L])$ and $\hat S>0$ on $[0,L]$, and
$\int_0^L I{\rm d}x\to \hat I$ for some positive constant $\hat I$.
\end{theo}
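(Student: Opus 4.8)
The plan is to combine uniform-in-$d_I$ a priori estimates with a compactness/weak-convergence argument, and then to rule out extinction of the infected population by a principal-eigenvalue argument that invokes the non-emptiness hypothesis. First I would record the two basic bounds. Integrating the sum of the two equations of \eqref{SIS-2} over $(0,L)$ and using the Neumann conditions yields $\int_0^L S\,{\rm d}x+\int_0^L \eta I\,{\rm d}x=\int_0^L\Lambda\,{\rm d}x$; since $\eta$ is continuous and positive, $\eta\ge\eta_{\min}>0$, this gives a uniform bound $\int_0^L I\,{\rm d}x\le \eta_{\min}^{-1}\int_0^L\Lambda\,{\rm d}x$ independent of $d_I$. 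A maximum-principle argument on the first equation (inspecting interior extrema, with Hopf's lemma handling boundary extrema under the Neumann condition) gives a uniform two-sided bound $0<c_0\le S\le C_0$: at a maximum the sign of $\gamma-\beta S$ forces $S\le\max\{\max\Lambda,\max(\gamma/\beta)\}$, and at a minimum it forces $S\ge\min\{\min\Lambda,\min(\gamma/\beta)\}=:c_0>0$.

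Next I would extract the limits. Because we are in one space dimension, writing $-d_S S_{xx}=f$ with $f:=\Lambda-S-\beta SI+\gamma I$ bounded in $L^1$ (by the previous step) and integrating once from $0$ with $S_x(0)=0$ gives $S_x(x)=-d_S^{-1}\int_0^x f$, hence $\|S_x\|_\infty\le d_S^{-1}\|f\|_{L^1}\le C$; thus $S$ is uniformly Lipschitz and Arzel\`a--Ascoli yields $S\to\hat S$ uniformly along a subsequence, with $\hat S$ continuous and $\hat S\ge c_0>0$. Since $\{I\,{\rm d}x\}$ is bounded in total variation, a further subsequence gives $I\,{\rm d}x\rightharpoonup\mu$ weakly-$*$ for a Radon measure $\mu\ge0$; testing against $\zeta\equiv1$ then gives $\int_0^L I\,{\rm d}x\to\mu([0,L])=:\hat I$, which establishes convergence of the total infected mass and leaves only $\hat I>0$ to be proved.

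The main obstacle is ruling out extinction, and this is exactly where the hypothesis $\{x:\beta\tilde S>\gamma+\eta\}\ne\emptyset$ enters. I would argue by contradiction: if $\hat I=0$ then $I\to0$ in $L^1$, so $f\to\Lambda-S$ in $L^1$ and the Lipschitz estimate (applied to the difference) forces $S\to\tilde S$ uniformly, where $\tilde S$ solves \eqref{dfe}. Now observe that the positive component $I$ satisfies $-d_I I_{xx}-(\beta S-\gamma-\eta)I=0$ with Neumann data, so $I$ is a positive principal eigenfunction and hence the principal eigenvalue $\lambda_1\big(d_I,\beta S-\gamma-\eta\big)$ of $-d_I\phi_{xx}-(\beta S-\gamma-\eta)\phi=\lambda\phi$ equals $0$ for every $d_I$. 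On the other hand, the variational characterization gives $-\max_{[0,L]}(\beta S-\gamma-\eta)\le\lambda_1\le-\max_{[0,L]}(\beta S-\gamma-\eta)+o(1)$ as $d_I\to0$, the upper bound coming from a test function concentrating on a scale $\delta=\delta(d_I)\to0$ with $d_I/\delta^2\to0$. Combined with $S\to\tilde S$ this yields $\lambda_1\to-\max_{[0,L]}(\beta\tilde S-\gamma-\eta)<0$, since the hypothesis says precisely that $\max_{[0,L]}(\beta\tilde S-\gamma-\eta)>0$. This contradicts $\lambda_1\equiv0$, so $\hat I>0$.

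Together with the positivity $\hat S\ge c_0>0$ already obtained, these steps deliver all the assertions of the theorem along the chosen subsequence. The delicate points I expect to check carefully are the uniformity of the eigenvalue asymptotics when the potential $\beta S-\gamma-\eta$ itself varies with $d_I$ (so that the two limits $d_I\to0$ and $S\to\tilde S$ can be taken simultaneously), and the boundary cases in the maximum-principle bounds on $S$; the compactness of $S$ and the convergence of $\int_0^L I\,{\rm d}x$ are, by contrast, comparatively routine once the $L^1$ bound on $I$ is in hand.
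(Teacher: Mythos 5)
The paper does not actually prove this statement---it is quoted verbatim from \cite[Theorem 3.2]{LPW1}---so your argument can only be measured against the toolkit this paper deploys for its own results. Measured that way, your proposal is correct and runs on essentially the same circle of ideas: the integrated identity $\int_0^L S\,{\rm d}x+\int_0^L\eta I\,{\rm d}x=\int_0^L\Lambda\,{\rm d}x$ giving the uniform $L^1$ bound on $I$; maximum-principle bounds for $S$; integrating the $S$-equation once from $x=0$ to get a uniform Lipschitz bound on $S$ (the same device the paper uses, via \eqref{3.21-aa}, in Claim 1 of the proof of Theorem \ref{th2.2}); weak-$*$ compactness of the measures $I\,{\rm d}x$; and, for persistence, the identity $\lambda_1(d_I,\beta S-\gamma-\eta)=0$ combined with the small-diffusion limit of the principal eigenvalue (Lemma \ref{a2.1}) and continuous dependence on the weight---precisely the argument that opens the paper's proof of Theorem \ref{th2.1}. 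Two points deserve tightening, though neither is a genuine gap: (i) your step ``$I\to0$ in $L^1$ forces $S\to\tilde S$'' is stated loosely; the clean route is either to pass to the limit in the weak formulation of the $S$-equation and invoke uniqueness for \eqref{dfe}, or to use the energy estimate $d_S\int_0^L(S-\tilde S)_x^2\,{\rm d}x+\int_0^L(S-\tilde S)^2\,{\rm d}x=-\int_0^L(\beta S-\gamma)I\,(S-\tilde S)\,{\rm d}x\le C\int_0^L I\,{\rm d}x\to0$, combined with the uniform Lipschitz bound to upgrade $L^2$ to uniform convergence; (ii) the uniformity issue you flag in the eigenvalue asymptotics (the potential $\beta S-\gamma-\eta$ varies with $d_I$) is indeed resolved by the uniform convergence $S\to\tilde S$, because the variational characterization gives $|\lambda_1(d,m_1)-\lambda_1(d,m_2)|\le\|m_1-m_2\|_{L^\infty(0,L)}$ for every fixed $d>0$, so one may replace the moving potential by the fixed one $\beta\tilde S-\gamma-\eta$ and then apply Lemma \ref{a2.1}, whose limit $-\max_{[0,L]}(\beta\tilde S-\gamma-\eta)$ is negative exactly by the hypothesis of the theorem.
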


As in Theorem \ref{mainthm1}, Theorem \ref{mainthm2} does not characterize the precise distribution of the susceptible and infected populations. In this paper, we will also provide a clear picture of the population distributions for \eqref{SIS-2} as the movement rate $d_I$ tends to zero. It turns out that the spatial profiles of the disease distribution modelled by \eqref{SIS} and \eqref{SIS-2} are rather different.

The rest of paper is organized as follows. In section 2, we state the main theoretical results,
and section 3 is devoted to their proofs. In section 4, we carry out the numerical simulations and discuss the implications of our results in terms of disease control. In the appendix, we recall some known facts which will be used in the paper.

\section{Statement of main results}
In this section, we state the main findings of this paper on models \eqref{SIS} and \eqref{SIS-2}. To proceed, we underline some terminologies frequently used throughout the paper. For model \eqref{SIS}, we call $\frac{\gamma(x)}{\beta(x)}$  the {\it risk function}, and call each element of the set
$\big\{x\in[0,L]:\ \frac{\gamma(x)}{\beta(x)}=\min_{x\in[0,L]}\frac{\gamma(x)}{\beta(x)}\big\}$
 the {\it highest-risk point} (or {\it location}). Similarly, for model \eqref{SIS-2}, we call $\frac{\gamma(x)+\eta(x)}{\beta(x)}$  the {\it risk function}, and call each element of the set
$\big\{x\in[0,L]:\ \frac{\gamma(x)+\eta(x)}{\beta(x)}=\min_{x\in[0,L]}\frac{\gamma(x)+\eta(x)}{\beta(x)}\big\}$  the {\it highest-risk point} (or {\it location}).

\subsection{Results for model \eqref{SIS}}
For the sake of convenience, we set
 $$
k(x)=\frac{\gamma(x)}{\beta(x)},\ \ \ \ k_{min}=\min_{x\in[0,L]} k(x),$$
and
 $$\Theta_{k}=\big\{x\in[0,L]:\ k(x)=k_{min}\big\}.
 $$
We note that when the risk function $k(x)=k$ is a positive constant, it follows from \cite{WJL} that
$S(x)\equiv k$ is a constant, and in turn by the equation of $I$, we immediately see that $I=\frac{N}{L}-k$ is also a positive constant provided that $k<\frac{N}{L}$. In what follows, we do not consider such a trivial case and assume that $k(x)$ is non-constant on $[0,L]$.

We now state our main result on the asymptotic behavior of any EE $(S,I)$ of \eqref{SIS} as $d_I\rightarrow0$ as follows.

\begin{theo}\label{th2.1} Assume that $k(x)$ is non-constant and $k_{min}<\frac{N}{L}$. Then as $d_I\rightarrow0$, the EE $(S,I)$ of \eqref{SIS} satisfies
 \bes\label{S-limt}
 S(x)\to k_{min}\ \ \ \mbox{uniformly for}\ x\in[0,L].
 \ees
The following assertions hold for the asymptotic behavior of $I$.

\begin{enumerate}
  \item[{\rm(i)}] If $\Theta_{k}=\{x_0\}$, then we have
 $$
 I(x)\to(N-Lk_{min})\delta(x_0)\ \ \mbox{weakly in the sense of\ \eqref{wks}},
 $$
 where $\delta(x_0)$ is the Dirac measure centered at $x_0$. Moreover, $I(x)\to0$ locally uniformly in $[0,L]\setminus\{x_0\}$.

   \item[{\rm(ii)}] If $\Theta_{k}=[\varrho_1,\varrho_2]$ for some $0<\varrho_1<\varrho_2<L$, then we have
   $$
   \mbox{$I(x)\to0$\ \ \ uniformly on $[0,\varrho_1]\cup[\varrho_2,L]$,}
   $$
and
 $$
 I(x)\to \hat I(x)\ \ \ \ \mbox{uniformly for}\ x\in[\varrho_1,\varrho_2],
 $$
 where $\hat I\in C^2([\varrho_1,\varrho_2])$, $\hat I>0$ in $(\varrho_1,\varrho_2)$, and $\hat I$ is the unique positive solution of
\begin{equation}\label{limit-I}
\begin{cases}
-\hat I_{xx}=\frac{\beta(x)}{d_S}(\hat a-\hat I)\hat I,
\ \ \ \  \ \varrho_1<x<\varrho_2,\vspace{1mm} \\
\hat I=0,\ \ \ \ \ \ \ \ \ \ \  \ \ \ \  \ \ \ \  \ \ \ \ \ x=\varrho_1,\,\varrho_2,\vspace{1mm} \\
 \displaystyle
\int_{\varrho_1}^{\varrho_2}\hat I\,{\rm d}x=N-Lk_{min},
\end{cases}
\end{equation}
where the positive constant $\hat a$ is uniquely determined by the integral constraint in \eqref{limit-I}.

\end{enumerate}

\end{theo}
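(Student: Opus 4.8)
The plan is to exploit the conservation structure of \eqref{SIS} to reduce the system to a single scalar equation for $I$, and then to combine the compactness provided by Theorem \ref{mainthm1} with sharp boundary-layer estimates. Adding the two equations of \eqref{SIS} and using the Neumann conditions gives $(d_SS+d_II)_{xx}\equiv0$ together with $(d_SS+d_II)_x=0$ at $x=0,L$, so that
\[
d_SS(x)+d_II(x)=C\qquad\text{on }[0,L]
\]
for a constant $C=C(d_I)>0$. Integrating this identity and using $\int_0^L(S+I)\,{\rm d}x=N$ shows $C$ is bounded, so along the subsequence of Theorem \ref{mainthm1} we may assume $C\to C_0$. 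Substituting $S=(C-d_II)/d_S$ into the $I$-equation yields the scalar problem
\[
-d_II_{xx}=\frac{\beta(x)}{d_S}\,I\,\bigl(C-d_Sk(x)-d_II\bigr),\qquad 0<x<L,
\]
which is the workhorse of the proof.

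First I would identify the limit $\hat S$. Since $\int_0^L d_II\,\zeta\,{\rm d}x=d_I\int_0^L I\zeta\,{\rm d}x\to0$ for every $\zeta\in C([0,L])$ (the weak limit $\mu$ being finite), while $d_II=C-d_SS\to C_0-d_S\hat S$ uniformly by Theorem \ref{mainthm1}, the limit $\hat S$ must be the constant $C_0/d_S=:\bar S$. To pin down $\bar S=k_{min}$, note that Theorem \ref{mainthm1} already gives $\bar S\ge k_{min}$; if $\bar S>k_{min}$, one may choose a fixed interval $U\subset(0,L)$ on which $k<\bar S$, so that $S-k\ge\epsilon_0>0$ there for all small $d_I$. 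Then the scalar equation forces $-I_{xx}\ge(\beta_{min}\epsilon_0/d_I)\,I$ on $U$ with $I>0$, and testing against the principal Dirichlet eigenfunction of $U$ yields $\beta_{min}\epsilon_0/d_I\le\lambda_1(U)$, which is impossible once $d_I$ is small. Hence $\bar S=k_{min}$, and uniqueness of the limit upgrades the convergence to the full family, proving \eqref{S-limt}; in particular $C_0=d_Sk_{min}$.

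Next I would locate $\mu$. On any compact subset of $\{k>k_{min}\}$ one has $k-S\ge\delta>0$ for small $d_I$, so the scalar equation gives $I_{xx}\ge(\beta_{min}\delta/d_I)\,I\ge0$; comparing $I$ with the explicit solution of $w''=(\beta_{min}\delta/d_I)w$ matching $I$ at the endpoints, and using the crude bound $I\le(C-d_S\min S)/d_I=o(1)/d_I$, the exponential factor $e^{-c/\sqrt{d_I}}$ beats the $1/d_I$ growth and yields $I\to0$ locally uniformly off $\Theta_k$. Thus $\mathrm{supp}\,\mu\subseteq\Theta_k$, and passing to the limit in $\int_0^L(S+I)\,{\rm d}x=N$ gives the total mass $\mu([0,L])=N-Lk_{min}>0$. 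In case (i), $\Theta_k=\{x_0\}$ forces $\mu=(N-Lk_{min})\delta(x_0)$, and the decay statement is exactly the locally uniform convergence just established.

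For case (ii) the work concentrates on $[\varrho_1,\varrho_2]$, where $k\equiv k_{min}$ and the scalar equation becomes $-I_{xx}=(\beta/d_S)\,I(\alpha-I)$ with $\alpha:=(C-d_Sk_{min})/d_I=(C-C_0)/d_I$. The main obstacle is the boundary-layer analysis at $\varrho_1,\varrho_2$: I must show that $\alpha$ stays bounded and convergent, and that the outer exponential decay forces $I(\varrho_1),I(\varrho_2)\to0$, so that the limit problem acquires the Dirichlet condition in \eqref{limit-I}. Boundedness of $\alpha$ follows from the mass constraint, since $\alpha\to\infty$ would drive $I$ toward the carrying capacity $\alpha$ throughout $(\varrho_1,\varrho_2)$ and make $\int_{\varrho_1}^{\varrho_2}I\,{\rm d}x$ unbounded, while $\alpha$ below the weighted principal Dirichlet eigenvalue would force $\int_{\varrho_1}^{\varrho_2}I\,{\rm d}x\to0$; both contradict $\int_{\varrho_1}^{\varrho_2}I\,{\rm d}x\to N-Lk_{min}>0$. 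With $\alpha$ bounded, standard elliptic estimates give $C^2$-compactness on $[\varrho_1,\varrho_2]$, and any limit $\hat I$ solves the logistic Dirichlet problem with parameter $\hat a=\lim\alpha$. Since the mass of the positive solution depends strictly monotonically on the parameter, the constraint $\int_{\varrho_1}^{\varrho_2}\hat I\,{\rm d}x=N-Lk_{min}$ determines $\hat a$, and hence $\hat I$, uniquely, giving \eqref{limit-I}; by uniqueness the full family converges, and the vanishing boundary values together with the outer decay yield the uniform convergence on $[0,\varrho_1]\cup[\varrho_2,L]$.
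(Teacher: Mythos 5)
Your treatment of \eqref{S-limt} and of case (i) is correct, and in places cleaner than the paper's own argument: the conservation identity $d_SS+d_II\equiv C$ lets you identify $\hat S$ as a constant directly (the paper instead goes through $L^1$ elliptic estimates, a $W^{1,2}$ bound and a weak formulation to conclude $-\hat S_{xx}=0$ with Neumann conditions); your Dirichlet eigenfunction test on a subinterval where $k<\bar S$ replaces the paper's appeal to the small-diffusion limit of the principal Neumann eigenvalue; and your exponential comparison gives the decay of $I$ off $\Theta_k$ quantitatively, where the paper uses convexity of $I$ plus the Neumann conditions and the measure-theoretic identity $\mu(\{\hat S<k\})=0$. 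In case (ii) your skeleton (interior compactness, logistic limit problem, mass constraint pinning down $\hat a$) is the same as the paper's; note however that your boundedness argument for $\alpha$ is only a sketch --- the claim that $\alpha\to\infty$ drives $\int_{\varrho_1}^{\varrho_2}I\,{\rm d}x\to\infty$ needs, e.g., comparison of $I$ from below with the positive Dirichlet logistic solution on a fixed subinterval (whose mass blows up with the parameter), or the paper's Harnack-plus-supersolution eigenvalue argument --- but this can be completed.

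The genuine gap is the boundary-layer step in case (ii): you assert that ``the outer exponential decay forces $I(\varrho_1),I(\varrho_2)\to0$.'' It does not. Your decay estimate on a compact interval $[a,b]\subset\{x:\,k(x)>k_{min}\}$ has rate $\exp(-c\sqrt{\delta/d_I})$ with $\delta=\min_{[a,b]}(k-S)$; since $k(\varrho_1)=k_{min}$ while $\|S-k_{min}\|_\infty\to0$ at an unquantified rate, $\delta$ degenerates as $b\uparrow\varrho_1$, so the estimate yields $I\to0$ only locally uniformly on $[0,\varrho_1)\cup(\varrho_2,L]$ and says nothing at the endpoints. Without endpoint control, the interior $C^2_{loc}$ limit $\hat I$ could a priori satisfy $\hat I(\varrho_1^+)>0$ (a transition layer just inside $(\varrho_1,\varrho_2)$), so neither the Dirichlet condition in \eqref{limit-I} nor the uniform convergence on $[0,\varrho_1]\cup[\varrho_2,L]$ asserted in the theorem follows. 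The paper closes precisely this gap with a global equicontinuity estimate: once $w=(S-k_{min})/d_I$ and $I$ are bounded on $[0,L]$ (which you do have, since $d_Sw+I=\alpha$ with $\alpha$ bounded), it multiplies the inequality $-I_{xx}\le\beta wI$ by $I$ and integrates over $(0,L)$ with the Neumann conditions to get $\int_0^L(I_x)^2\,{\rm d}x\le C$; the compact embedding $W^{1,2}(0,L)\hookrightarrow C([0,L])$ then gives uniform convergence of $I$ on all of $[0,L]$, and matching this continuous limit with the outer limit forces $\hat I(\varrho_1)=\hat I(\varrho_2)=0$. (An alternative fix inside your own framework: $d_Sw_{xx}\le\beta Iw\le C$ globally together with $w_x(0)=w_x(L)=0$ gives a uniform Lipschitz bound on $w$, hence on $I=\alpha-d_Sw$.) With such an estimate inserted, the remainder of your case (ii) --- passage of the mass constraint, positivity of $\hat I$ via Harnack, and strict monotonicity of the mass in $\hat a$ --- goes through as in the paper.
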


Regarding Theorem \ref{th2.1}, we would like to make some comments in order as follows.

\begin{re}\label{re2.2}\noindent In addition to the two cases treated in Theorem \ref{th2.1}, we can handle some  more general cases.
In particular, we would like to make the following comments.

\begin{itemize} \item[\rm{(i)}] If the set $\Theta_{k}$ contains only finitely many isolated points, say $\{x_i\}_{i=1}^j$ for some $j\geq2$, then one can slightly modify the proof of Theorem \ref{th2.1}(i) to show that
$S\to k_{min}$ uniformly on $[0,L]$, and $I\to0$ locally uniformly in $[0,L]\setminus(\{x_i\}_{i=1}^j)$, and
$$
 I(x)\to\sum_{i=1}^jc_i\delta(x_i)\ \ \mbox{weakly in the sense of\ \eqref{wks}},
 $$
where $\delta(x_i)$ is the Dirac measure centered at $x_i$ and the nonnegative constants $c_i$ fulfill $\sum_{i=1}^jc_i=N-Lk_{min}$.
Nevertheless, we can not determine the exact values of $c_i$; in other words, as $d_I\to0$, it is unclear to us whether $I$ concentrates at all $x_i\ (1\leq i\leq j)$ or only some of them. The numerical results suggest that the former alternative holds; see Figure \ref{fig1} in section 4.

\item[\rm{(ii)}] If the set $\Theta_{k}$ contains at least one proper interval of $[0,L]$,
by adapting the argument of Theorem \ref{th2.1}(ii), we can show that $S\to k_{min}$ uniformly on $[0,L]$, and $I\to\hat I$ uniformly on $[0,L]$ with
$$\hat I=0\ \ \mbox{ on}\ [0,L]\setminus\Theta_{k},\ \ \ \ \int_{\Theta_{k}}\hat I\ {\rm d}x=N-Lk_{min}.
$$
In particular, if $\Theta_{k}=\Big(\bigcup_{i=1}^{j_*}[\underline \varrho_i,\,\overline \varrho_i]\Big)\bigcup\Big(\bigcup\{x_i\}_{i=0}^{j^*}\Big)$ for some $j_*\geq1,\,j^*\geq0$,
then we can prove that
$$\hat I=0\ \ \mbox{ on}\ [0,L]\setminus(\bigcup_{i=1}^{j_*}(\underline \varrho_i,\,\overline \varrho_i)),$$
and in $(\underline \varrho_i,\,\overline \varrho_i)\ (1\leq i\leq j_*)$, either $\hat I=0$ or $\hat I>0$. Without loss of generality, assuming that $\hat I(x)>0$ for $x\in\bigcup_{i=1}^{\hat {j_*}}(\underline \varrho_i,\,\overline \varrho_i)$ for some $1\leq \hat {j_*}\leq j_*$, then in each such $(\underline \varrho_i,\,\overline \varrho_i)$, we can conclude that $\hat I$ solves
\begin{equation}\nonumber
\begin{cases}
-\hat I_{xx}=\frac{\beta(x)}{d_S}(\hat a-\hat I)\hat I,\ \ & \underline \varrho_i<x<\overline \varrho_i,\vspace{1mm} \\
\hat I=0,\ \ &x=\underline \varrho_i,\,\overline \varrho_i,
\end{cases}
\end{equation}
where the positive constant $\hat a$ is uniquely determined by
 $$
 \sum_{i=1}^{\hat {j_*}}\int_{\underline \varrho_i}^{\overline \varrho_i}\hat I\,{\rm d}x=N-Lk_{min}.
 $$
However, it seems rather challenging to prove whether $\hat I$ is positive on all intervals $(\underline \varrho_i,\,\overline \varrho_i)\ (1\leq i\leq j_*)$ or only on some of them. Our numerical results suggest that the former alternative holds; see Figure \ref{fig2} in section 4.

\item[\rm{(iii)}] The assertion in (ii) above suggests that if the highest-risk locations contain at least one interval, then the disease can not stay on any possible isolated highest-risk points once the infected individuals move slowly.
\end{itemize}
\end{re}

\begin{re}\label{re2.1}\noindent In the case (ii) of Theorem \ref{th2.1}, if $\varrho_1=0$ (or $\varrho_2=L$), the results of Theorem \ref{th2.1} still hold true if we replace the Dirichlet boundary condition of $\hat I$ in \eqref{limit-I} at $\varrho_1=0$ (or $\varrho_2=L$) by the Neumann boundary condition $\hat I_x(0)=0$ (or $\hat I_x(L)=0$). A similar remark applies to the case discussed in Remark \ref{re2.2}(ii) above.
\end{re}

\begin{re}\label{re2.1aa}\noindent After this paper was finished, we noticed the work \cite{CS} in which the authors derived \eqref{S-limt} and the convergence of the $I$-component in the case (i) of Theorem \ref{th2.1} in any spatial dimension in a more general setting; see Theorem 2.5(i) there. However, their result does not establish the convergence of the $I$-component within $\Theta_{k}$ in the case (ii) of Theorem \ref{th2.1} nor in the more general case mentioned by Remark \ref{re2.2}; on the other hand, our proof of \eqref{S-limt} and the convergence of the $I$-component outside of $\Theta_{k}$ is rather different from that of \cite{CS}.

\end{re}

\subsection{Results for model \eqref{SIS-2}} We now turn to system \eqref{SIS-2}. For the sake of simplicity, we assume that $\Lambda$ in \eqref{SIS-2} is a positive constant, and also denote
$$
h(x)=\frac{\gamma(x)+\eta(x)}{\beta(x)},\ \ \ \  h_{min}=\min_{x\in[0,L]}h(x),$$
and
$$
\Theta_h=\big\{x\in[0,L]:\ h(x)=h_{min}\big\}.
$$
Clearly, $\tilde S(x)=\Lambda$. We also enhance the existence condition of EE of \eqref{SIS-2} in Theorem \ref{mainthm2} by imposing the following condition:
\bes\label{cond}
\Lambda>h(x)\ \ \ \mbox{for all}\ \ x\in[0,L].
\ees

\vskip5pt Now we can state our main findings on the asymptotic behavior of any EE $(S,I)$ of \eqref{SIS-2} as $d_I\rightarrow0$. The first result reads as follows.

\begin{theo}\label{th2.2} Assume that \eqref{cond} holds.
As $d_I\rightarrow0$, then any EE $\left(S,I\right)$
of \eqref{SIS-2} satisfies (up to a subsequence of $d_I$) that
$S\to \hat S\ \ \mbox{uniformly on}\ [0,L]$, and $I\to\mu$ weakly in the sense of \eqref{wks}, where
$\mu$ is some Radon measure and $\hat S$ solves
weakly in $W^{1,2}(0,L)$ the free boundary problem:
\bes\label{th2.2-1}
-d_S\hat S_{xx}=\Lambda-\hat S-\eta(x)\mu(\{x\})\big|_{\{x\in[0,\,L]:\ \hat S(x)=h(x)\}},\ \ \ x\in(0,L).
\ees
Here, $\mu(\{x\})\big|_{\{x\in[0,\,L]:\ \hat S(x)=h(x)\}}$ is the restriction of $\mu$ on the set $\{x\in[0,L]:\ \hat S(x)=h(x)\}$; otherwise, $\mu(\{x\})=0$. Moreover we have the following properties for $\mu$ and $\hat S$.

\begin{enumerate}

  \item[{\rm(i)}] The Radon measure $\mu$ satisfies
\bes\label{th2.2-3}
\mu(\{x\in[0,L]:\ \hat S(x)\not=h(x)\})=0, \ \ \mu(\{x\in[0,L]:\ \hat S(x)=h(x)\})>0.
\ees

  \item[{\rm(ii)}] The function $\hat S\in C([0,L])$ satisfies
\bes\label{th2.2-2}
h_{min}\leq\hat S(x)\leq h(x),\ \ \forall x\in[0,L],
\ees
\bes\label{th2.2-2a}
\Theta_h\subset\big\{x\in[0,L]:\ \hat S(x)=h(x)\big\};
\ees

If $x_1,\,x_2\in\Theta_h$ with $x_1<x_2$ and $(x_1,x_2)\cap\Theta_h=\emptyset$, then
\bes\label{th2.2-2b}
h_{min}<\hat S(x),\ \ \ \forall x\in(x_1,x_2).
\ees


\end{enumerate}
\end{theo}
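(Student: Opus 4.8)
The plan is to read the singular limit $d_I\to0$ off the $I$-equation $-d_II_{xx}=\beta(x)\big(S-h(x)\big)I$, whose sign structure drives all the mass of $I$ onto the set where $S$ meets the risk function $h$. First I would record the compactness already supplied by Theorem~\ref{mainthm2} (whose hypothesis is implied by \eqref{cond}): along a subsequence $S\to\hat S$ uniformly on $[0,L]$ with $\hat S\in C([0,L])$, and $\int_0^L I\,{\rm d}x\to\hat I$ for some constant $\hat I>0$. Hence $\{I\}$ is bounded in $L^1(0,L)$, so after a further subsequence $I\,{\rm d}x\rightharpoonup\mu$ for a nonnegative Radon measure $\mu$ with total mass $\mu([0,L])=\hat I>0$ (test \eqref{wks} with $\zeta\equiv1$). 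Testing the $S$-equation of \eqref{SIS-2} against $S$ and using that $S$ is uniformly bounded together with $\int_0^LI\le C$ gives $d_S\int_0^LS_x^2\le C$ uniformly, so $\hat S\in W^{1,2}(0,L)$ and $S_x\rightharpoonup\hat S_x$ in $L^2$.

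Next I would derive \eqref{th2.2-1}. The key algebraic observation is that \emph{adding} the two equations of \eqref{SIS-2} cancels the quadratic term:
$$-d_SS_{xx}-d_II_{xx}=\Lambda-S-\eta(x)I.$$
Testing against $\phi\in C^2([0,L])$ with $\phi_x(0)=\phi_x(L)=0$ and integrating by parts twice on the $I$-term yields $d_S\int S_x\phi_x-d_I\int I\phi_{xx}=\int(\Lambda-S)\phi-\int\eta\phi\,I$. Letting $d_I\to0$, the term $d_I\int I\phi_{xx}\to0$ by the $L^1$ bound, $\int\eta\phi\,I\to\int\eta\phi\,{\rm d}\mu$ by \eqref{wks} (as $\eta\phi\in C([0,L])$), and the remaining terms pass to the limit by the convergences above; extending to all $\phi\in W^{1,2}(0,L)$ by density shows that $\hat S$ solves $-d_S\hat S_{xx}=\Lambda-\hat S-\eta\mu$ weakly.

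The heart of the argument, and what I expect to be the main obstacle, is the localization of $\mu$ recorded in \eqref{th2.2-3}, together with the upper bound $\hat S\le h$ in \eqref{th2.2-2}. For the latter I would argue by contradiction: if $\hat S(x_0)>h(x_0)$, then by continuity and uniform convergence $\beta(S-h)\ge c>0$ on some fixed interval $[a,b]$ for all small $d_I$, so $I>0$ is a positive supersolution of the Dirichlet problem for $-d_I\partial_{xx}-\beta(S-h)$ on $[a,b]$, forcing its principal eigenvalue to be nonnegative; but testing with the first Dirichlet eigenfunction $\phi(x)=\sin\!\big(\pi(x-a)/(b-a)\big)$ shows that eigenvalue is $\le d_I\big(\pi/(b-a)\big)^2-c<0$ for small $d_I$, a contradiction. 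For $\mu(\{\hat S<h\})=0$ I would use only the $L^1$ bound: on a neighborhood $U$ where $\beta(h-S)\ge\delta>0$, choosing $\psi\in C_c^\infty(U)$ with $0\le\psi\le1$ and testing the $I$-equation against $\psi^2$ gives $\delta\int I\psi^2\le-d_I\int I(\psi^2)_{xx}\le d_I\,C\to0$; since $\psi^2\in C([0,L])$, \eqref{wks} forces $\mu$ to vanish on $\mathrm{supp}\,\psi$, hence on the open set $\{\hat S<h\}$. Combined with $\hat S\le h$ this yields $\mu(\{\hat S\neq h\})=0$, and then $\mu(\{\hat S=h\})=\hat I>0$ by conservation of total mass, establishing \eqref{th2.2-3} and reducing the source term in \eqref{th2.2-1} to $\{\hat S=h\}$.

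Finally I would establish the pointwise bounds in (ii). Wherever $\hat S<h$ the measure $\mu$ vanishes locally, so $\hat S$ is a classical solution of $-d_S\hat S_{xx}=\Lambda-\hat S$ there. If $\min\hat S=\hat S(x_0)<h_{min}$, then $\hat S<h$ near $x_0$, and the interior minimum principle (the Neumann condition at an endpoint handled by a concavity argument) forces $\hat S(x_0)\ge\Lambda>h_{min}$, a contradiction; hence $\hat S\ge h_{min}$, which is the lower bound in \eqref{th2.2-2}. Then \eqref{th2.2-2a} is immediate, since for $x\in\Theta_h$ one has $h_{min}\le\hat S(x)\le h(x)=h_{min}$. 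For \eqref{th2.2-2b}, if $\hat S(x_*)=h_{min}$ at some interior $x_*\in(x_1,x_2)$, then $x_*$ is a global minimum with $h(x_*)>h_{min}$, so again $\mu=0$ near $x_*$ and the interior minimum principle gives $\hat S(x_*)\ge\Lambda>h_{min}$, a contradiction; thus $\hat S>h_{min}$ throughout $(x_1,x_2)$.
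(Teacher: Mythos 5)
Your proposal is correct in substance, but it follows a genuinely different route from the paper's, and in several places a cleaner one. The paper first establishes $\hat S\le h$ and the localization \eqref{th2.2-3} by mimicking Theorem \ref{th2.1} (the Neumann principal eigenvalue identity $\lambda_1(d_I,\gamma+\eta-\beta S)=0$ together with Lemma \ref{a2.1}, plus integration of the $I$-equation over all of $[0,L]$), and only then derives \eqref{th2.2-1} by splitting the right-hand side of the $S$-equation as $[\Lambda-S-\eta I]-[\beta S-(\gamma+\eta)]I$, a step that needs \eqref{th2.2-3} to kill the second term. You instead derive \eqref{th2.2-1} directly by adding the two equations of \eqref{SIS-2}, which cancels the bilinear term before anything is known about $\mu$; your localization \eqref{th2.2-3} then comes from a local computation (testing the $I$-equation against $\psi^2$), and your bound $\hat S\le h$ from a local Dirichlet eigenvalue comparison with an explicit sine test function rather than the paper's global Neumann eigenvalue limit. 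The largest divergence is in part (ii): the paper's Claims 1--3 rely on explicit solution formulas $c_1e^{d_S^{-1/2}x}+c_2e^{-d_S^{-1/2}x}+\Lambda$ on intervals where $\hat S<h$, together with a delicate Ascoli--Arzel\`a argument showing $S\to\hat S$ in $C^1$ near an endpoint in order to obtain $\hat S_x(0)=0$. You replace all of this by the observation that wherever $\hat S<h$ one has $\mu=0$ locally, hence $-d_S\hat S_{xx}=\Lambda-\hat S>0$ by \eqref{cond}, so $\hat S$ is strictly concave there and can have no minimum; you then get \eqref{th2.2-2a} as an immediate corollary of $\hat S\ge h_{min}$, reversing the paper's logical order (the paper proves touching at minima of $h$ first, and deduces the lower bound afterwards). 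This concavity argument is more elementary than, and just as rigorous as, the paper's.

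Two details should be written out, both of which your own framework already supports. First, your test functions $\psi\in C_c^\infty(U)$ only give $\mu=0$ on the interior part of the open set $\{\hat S<h\}$, while \eqref{th2.2-3} also concerns the endpoints $x=0,L$; to cover them, take $\psi$ not vanishing at the endpoint but with $\psi_x=0$ there, and use the Neumann condition $I_x=0$ to kill the boundary terms in the double integration by parts (alternatively, integrate the $I$-equation over all of $[0,L]$ and use $\hat S\le h$, which is essentially the paper's route). Second, the phrase ``the Neumann condition at an endpoint handled by a concavity argument'' glosses over exactly the point where the paper works hardest; in your setting it is cheap, but you should say why: your limit identity holds for all $\zeta\in W^{1,2}(0,L)$ (the density extension is legitimate because $W^{1,2}(0,L)\hookrightarrow C([0,L])$ and $\mu$ is finite), so test functions need not vanish at the boundary. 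Hence, on an interval $[0,\delta)$ where $\hat S<h$ and therefore $\mu=0$, interior regularity gives $-d_S\hat S_{xx}=\Lambda-\hat S$ classically up to $x=0$, and integrating by parts against $\zeta$ with $\zeta(0)=1$ leaves only the boundary term $-d_S\hat S_x(0^+)\zeta(0)=0$, i.e.\ the natural condition $\hat S_x(0)=0$; strict concavity then rules out a boundary minimum below $h_{min}$, completing your endpoint case.
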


Theorem \ref{th2.2} asserts that $\hat S$ touches $h$ at all highest-risk points.
In what follows, our goal is to examine the properties $\hat S$ for some specific risk function $h$, which in turn provides us with a more precise description of the profile of $\mu$. Indeed, we can obtain the following result for \eqref{SIS-2}.

\begin{theo}\label{th2.3} Let $\hat S$ and $\mu$ be given as in Theorem \ref{th2.2}.
Assume that $h\in C^2([0,L])$ and \eqref{cond} holds. The following assertions hold.

\begin{enumerate}
  \item[{\rm(i)}] If $-d_Sh_{xx}\leq\Lambda-h$ in $(0,L)$, $h_x(0)\geq0$ and $h_x(L)\leq0$,
  then we have
  \bes\label{th2.3-a}
  \hat S(x)=h(x),\ \ \ \ \forall x\in[0,L],
  \ees
  \bes\label{th2.3-b}
  \mu(\{x\})=\frac{\Lambda-h(x)+d_Sh_{xx}(x)}{\eta(x)},\ \ \mbox{ a.e. for}\ \, x\in(0,L).
  \ees

 \item[{\rm(ii)}] If $h_x$ is non-decreasing on $[0,L]$ and $\Theta_h=\{\tau_0\}$ for some $0\leq\tau_0\leq L$,
 then the following assertions hold.

 \begin{enumerate}
  \item[{\rm(a)}] When $0<\tau_0<L$, we have
  \bes\label{th2.3-c}
      \hat S(x)=h(x),\ \ \forall x\in[\tau_1,\tau_2],
   \ees
   and in $[0,\tau_1)\cup(\tau_2,L]$, $\hat S<h$ satisfies
\begin{equation}\label{th2.3-d}
\begin{cases}
 -d_S\hat S_{xx}(x)=\Lambda-\hat S,\ \  \ x\in(0,\tau_1)\cup(\tau_2,L),\vspace{1mm} \\
\hat S_x(0)=0,\ \ \hat S_x(L)=0,\vspace{1mm} \\
\hat S(\tau_1)=h(\tau_1),\ \ \hat S(\tau_2)=h(\tau_2),
\end{cases}
\end{equation}
   and $\mu$ satisfies
     \bes\label{th2.3-e}
      \mu(\{x\})=\frac{\Lambda-h(x)+d_Sh_{xx}(x)}{\eta(x)},\ \ \mbox{ a.e. for}\ \, x\in(\tau_1,\tau_2),
      \ees
      \bes\label{th2.3-f}
      \mu(\{x\})=0, \ \ \ \ \forall x\in[0,\tau_1)\cup(\tau_2,L],
      \ees
where the numbers $\tau_1,\,\tau_2$ with $0<\tau_1<\tau_0<\tau_2<L$ are uniquely determined by
\bes\label{th2.3-g}
\frac{e^{2d_S^{-1/2}\tau_1}-1}{e^{2d_S^{-1/2}\tau_1}+1}=-\frac{d_S^{1/2}h_x(\tau_1)}{\Lambda-h(\tau_1)},\ \ \ \  \
\frac{e^{2d_S^{-1/2}(\tau_2-L)}-1}{e^{2d_S^{-1/2}(\tau_2-L)}+1}=-\frac{d_S^{1/2}h_x(\tau_2)}{\Lambda-h(\tau_2)}.
\ees

  \item[{\rm(b)}] When $\tau_0=L$, then we have the following assertions.
     \begin{enumerate}
  \item[{\rm(b-1)}] If $\frac{e^{2Ld_S^{-1/2}}-1}{e^{2Ld_S^{-1/2}}+1}>-\frac{d_S^{1/2}h_x(L)}{\Lambda-h(L)}$,
then \eqref{th2.3-c} and \eqref{th2.3-e} hold with $[\tau_1,\tau_2]$ replaced by $[\tau_1,L]$, $\mu([0,\tau_1))=0$, and on $[0,\tau_1]$, $\hat S$ satisfies
\begin{equation}\label{3.33-a}
\begin{cases}
 -d_S\hat S_{xx}(x)=\Lambda-\hat S,\ \ \ x\in(0,\tau_1),\vspace{1mm} \\
\hat S_x(0)=0,\ \ \hat S(\tau_1)=h(\tau_1),
\end{cases}
\end{equation}
 where $0<\tau_1<L$ is uniquely determined by the first equation in \eqref{th2.3-g}.

  \item[{\rm(b-2)}] If $\frac{e^{2Ld_S^{-1/2}}-1}{e^{2Ld_S^{-1/2}}+1}\leq-\frac{d_S^{1/2}h_x(L)}{\Lambda-h(L)}$,
then $\hat S$ is the unique positive solution of
\begin{equation}\label{3.33-b}
\begin{cases}
 -d_S\hat S_{xx}(x)=\Lambda-\hat S,\ \ \ x\in(0,L),\vspace{1mm} \\
\hat S_x(0)=0,\ \ \hat S(L)=h(L),
\end{cases}
\end{equation}
and $\mu$ satisfies
\bes\label{th2.3-h}
\mu([0,L))=0,\ \ \ \mu(\{L\})=\frac{\Lambda L-\int_0^L\hat S(x){\rm d}x}{\eta(L)}.
\ees

\end{enumerate}

\item[{\rm(c)}] When $\tau_0=0$, then we have the following assertions.
     \begin{enumerate}
  \item[{\rm(c-1)}] If $\frac{e^{2Ld_S^{-1/2}}-1}{e^{2Ld_S^{-1/2}}+1}>\frac{d_S^{1/2}h_x(0)}{\Lambda-h(0)}$,
then \eqref{th2.3-c} and \eqref{th2.3-e} hold with $[\tau_1,\tau_2]$ replaced by $[0,\tau_2]$, $\mu((\tau_2,L])=0$, and on $[\tau_2,L]$, $\hat S$ satisfies
\begin{equation}\label{3.33-c}
\begin{cases}
 -d_S\hat S_{xx}(x)=\Lambda-\hat S,\ \ \ x\in(\tau_2,L),\vspace{1mm} \\
\hat S_x(L)=0,\ \ \hat S(\tau_2)=h(\tau_2),
\end{cases}
\end{equation}
 where $0<\tau_2<L$ is uniquely determined by the second equation in \eqref{th2.3-g}.

  \item[{\rm(c-2)}] If $\frac{e^{2Ld_S^{-1/2}}-1}{e^{2Ld_S^{-1/2}}+1}\leq\frac{d_S^{1/2}h_x(0)}{\Lambda-h(0)}$,
then $\hat S$ is the unique positive solution of
\begin{equation}\label{3.33-d}
\begin{cases}
 -d_S\hat S_{xx}(x)=\Lambda-\hat S,\ \ \ x\in(0,L),\vspace{1mm} \\
\hat S_x(L)=0,\ \ \hat S(0)=h(0),
\end{cases}
\end{equation}
and $\mu$ satisfies
\bes\label{th2.3-i}
\mu((0,L])=0,\ \ \ \mu(\{0\})=\frac{\Lambda L-\int_0^L\hat S(x){\rm d}x}{\eta(0)}.
\ees

\end{enumerate}

\end{enumerate}

  \item[{\rm(iii)}] If $h_x$ is non-decreasing on $[0,\varrho_1]\cup[\varrho_2,L]$ and $\Theta_h=[\varrho_1,\varrho_2]$ for some $0<\varrho_1<\varrho_2<L$, then all the assertions in {\rm (ii)-(a)} above hold, where
 the numbers $\tau_1,\,\tau_2$ satisfying $0<\tau_1<\varrho_1<\varrho_2<\tau_2<L$ are uniquely determined by
\eqref{th2.3-g}.
\end{enumerate}

\end{theo}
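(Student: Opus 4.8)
The plan is to treat the limiting problem \eqref{th2.2-1} as an obstacle-type free boundary problem and to exploit the convexity hypotheses on $h$ to pin down the coincidence set $C:=\{x\in[0,L]:\ \hat S(x)=h(x)\}$ explicitly. Throughout I write $w:=h-\hat S$, which by \eqref{th2.2-2} satisfies $w\ge 0$ on $[0,L]$, and I record the two facts supplied by Theorem \ref{th2.2}: the measure $\mu$ is supported on $C$ (so $\mu=0$ off $C$ by \eqref{th2.2-3}), and consequently $\hat S$ solves the pure ODE $-d_S\hat S_{xx}=\Lambda-\hat S$ on the open non-contact set $N:=[0,L]\setminus C$, while on the interior of $C$ one has $\hat S=h$, whence substitution into \eqref{th2.2-1} yields the density $\mu(\{x\})=(\Lambda-h+d_Sh_{xx})/\eta$. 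This already gives \eqref{th2.3-b}, \eqref{th2.3-e}, \eqref{th2.3-f} once the location of $C$ is known, and the positivity of this density (needed for $\mu\ge 0$) is guaranteed by $-d_Sh_{xx}\le\Lambda-h$ in (i) and by $h_{xx}\ge 0$ together with \eqref{cond} in (ii)--(iii).

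The first key step is a convexity computation: on any component of $N$, substituting $\hat S=h-w$ into the ODE gives $d_Sw_{xx}=(\Lambda-h+d_Sh_{xx})+w$, so under each standing hypothesis the right-hand side is strictly positive and $w$ is strictly convex there. From this I read off the structure of $C$. For part (i) I argue by contradiction: a non-contact component with both endpoints in $C$ forces $w<0$ inside (a strictly convex function vanishing at both ends lies below its chord), contradicting $w\ge0$; a component abutting $x=0$ is excluded using $w_x(0)=h_x(0)\ge0$ (since $\hat S_x(0)=0$), which makes $w$ strictly increasing and unable both to return to $0$ and to satisfy $w_x(L)=h_x(L)\le0$; the component abutting $x=L$ is excluded symmetrically. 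Since $\Theta_h\subset C$ is nonempty by \eqref{th2.2-2a}, this forces $N=\emptyset$, i.e. $C=[0,L]$ and $\hat S\equiv h$, giving \eqref{th2.3-a}--\eqref{th2.3-b}. The same dichotomy shows, in (ii)(a) and (iii), that $N$ can meet only the two endpoints, so $C=[\tau_1,\tau_2]$ is a single interval; the boundary-slope facts $h_x(0)<0<h_x(L)$, forced by convexity together with $\Theta_h$ being either the interior point $\{\tau_0\}$ or the interior interval $[\varrho_1,\varrho_2]$, then rule out $\tau_1=0$ and $\tau_2=L$ exactly as above, yielding \eqref{th2.3-c}.

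The second key step is the smooth-fit (no-atom) principle at each free boundary point $\tau_i$. If $\mu$ carried a positive atom at $\tau_i$, the weak equation \eqref{th2.2-1} would force a jump $\hat S_x(\tau_i^+)-\hat S_x(\tau_i^-)=\eta(\tau_i)\mu(\{\tau_i\})/d_S>0$; but $w\ge 0$ with $w(\tau_i)=0$ (a minimum of $w$) forces $w_x$ to change sign the opposite way, i.e. $\hat S_x(\tau_i^+)\le\hat S_x(\tau_i^-)$, so no atom occurs and $\hat S\in C^1$ with $\hat S_x(\tau_i)=h_x(\tau_i)$. Solving $-d_S\hat S_{xx}=\Lambda-\hat S$ on $(0,\tau_1)$ with $\hat S_x(0)=0$ gives $\hat S=\Lambda+C\cosh(d_S^{-1/2}x)$; imposing $\hat S(\tau_1)=h(\tau_1)$ and the smooth-fit $\hat S_x(\tau_1)=h_x(\tau_1)$ eliminates $C$ and produces $\tanh(d_S^{-1/2}\tau_1)=-d_S^{1/2}h_x(\tau_1)/(\Lambda-h(\tau_1))$, the first identity in \eqref{th2.3-g}; the second follows symmetrically near $x=L$. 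Existence and uniqueness of $\tau_1\in(0,\tau_0)$ (resp. $\tau_1\in(0,\varrho_1)$) follow from the intermediate value theorem and strict monotonicity of $F(\tau):=\tanh(d_S^{-1/2}\tau)+d_S^{1/2}h_x(\tau)/(\Lambda-h(\tau))$, whose derivative is positive since $\tanh$ is increasing and $\big(h_x/(\Lambda-h)\big)'=(h_{xx}(\Lambda-h)+h_x^2)/(\Lambda-h)^2\ge0$ by convexity of $h$ and \eqref{cond}; the sign change $F(0)<0<F(\tau_0)$ comes from $h_x(0)<0$ and $h_x(\tau_0)=0$. This fixes $\hat S$, hence $\mu$, uniquely and establishes \eqref{th2.3-d}--\eqref{th2.3-f} and (iii).

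Finally, the boundary cases (ii)(b)--(c) with $\tau_0\in\{0,L\}$ reduce to a single threshold test: solve the pure ODE on all of $[0,L]$ with $\hat S_x(0)=0$, $\hat S(L)=h(L)$ and compare its slope at $L$ with $h_x(L)$; the resulting inequality is precisely the dichotomy in (b-1)/(b-2). When this ODE solution stays below $h$ (case (b-2)), the whole interval is non-contact, $\hat S$ solves \eqref{3.33-b}, and all remaining mass concentrates as a boundary atom whose weight \eqref{th2.3-h} is computed by integrating \eqref{th2.2-1} over $[0,L]$ and using $\hat S_x(0)=\hat S_x(L)=0$; when it would cross $h$ (case (b-1)), an interior contact interval $[\tau_1,L]$ appears with $\tau_1$ again fixed by smooth fit. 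Case (c), with identity \eqref{th2.3-i}, is the mirror image about the midpoint of $[0,L]$. I expect the main obstacle to be the rigorous justification of the smooth-fit/no-atom principle and the correct bookkeeping of a possible boundary atom of $\mu$ in the weak $W^{1,2}$ formulation, since these simultaneously control the transcendental matching conditions \eqref{th2.3-g} and the mass identities \eqref{th2.3-h}, \eqref{th2.3-i}; a secondary technical point is verifying a posteriori that the constructed $\hat S$ indeed satisfies $\hat S\le h$ globally, so that the coincidence set is exactly as claimed, which follows from the strict convexity of $w$ together with the matched zero data $w(\tau_i)=w_x(\tau_i)=0$.
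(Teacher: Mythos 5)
Your overall framework is genuinely different from the paper's and much of it is sound: treating \eqref{th2.2-1} as an obstacle problem, the identity $d_Sw_{xx}=(\Lambda-h+d_Sh_{xx})+w$ for $w=h-\hat S$ on the non-contact set, the chord argument excluding non-contact components with both endpoints in the coincidence set, and the interior no-atom/smooth-fit principle that yields the tangency conditions \eqref{th2.3-g} together with the strictly monotone function $F$ governing uniqueness and the (b-1)/(b-2) dichotomy. By contrast, the paper proves (i) by a sub-supersolution iteration on \eqref{th2.3-aa}, gets tangency from $C^1$ regularity plus concavity/convexity, and — crucially — settles the free boundary structure by returning to the finite-$d_I$ system. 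For part (i) and for the derivation of \eqref{th2.3-g} your route is correct and arguably cleaner than the paper's.

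There is, however, a genuine gap at the step "the boundary-slope facts $h_x(0)<0<h_x(L)$ \dots rule out $\tau_1=0$ and $\tau_2=L$ exactly as above." The argument "above" (your part (i) step) excludes a \emph{non-contact} component abutting $x=0$ under the hypothesis $h_x(0)\ge0$: there $w_x(0)=h_x(0)\ge0$ plus strict convexity makes $w$ increasing, so it cannot return to zero. In (ii)(a) and (iii) the task is the opposite — to prevent the \emph{contact} set from reaching $x=0$ — and the sign hypothesis is reversed ($h_x(0)<0$), so that argument does not transfer; nothing in your first key step addresses this configuration. This is exactly the point at which the paper leaves the limit problem: in Lemma \ref{l2.4} it supposes $\tau_1=0$, integrates the $S$-equation at finite $d_I$ over $[0,\tau]$, passes to the limit to get $S_x(\tau^-)\to h_x(\tau)-h_x(0)>0$ near $\tau_0$, and contradicts the strict decrease of $h$ to the left of $\tau_0$. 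The gap is repairable inside your own framework by the boundary analogue of your no-atom computation: if $\hat S=h$ on $[0,\epsilon]$, testing \eqref{3.20a} with nonnegative $\zeta$ concentrated at $x=0$ forces $\eta(0)\mu(\{0\})=d_Sh_x(0)<0$, contradicting $\mu\ge0$; but you never carry this out — you only flag "boundary atom bookkeeping" as an anticipated obstacle — and the argument you actually invoke is the wrong one. The same exclusion is needed again in (ii)(b)/(c) before your threshold dichotomy applies. A related minor slip: in (b-2) the identity \eqref{th2.3-h} is not obtained "using $\hat S_x(0)=\hat S_x(L)=0$", since $\hat S_x(L^-)\neq0$ there and it is the atom at $L$ that balances this flux; the clean derivation is to take $\zeta\equiv1$ in \eqref{3.20a}.
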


For model \eqref{SIS}, our result shows that the infected population concentrates or aggregates  only at the highest-risk locations.
In sharp contrast, for model \eqref{SIS-2}, our result suggests that the disease will occupy a neighborhood
of the interior highest-risk locations or even occupy the whole habitat $[0,L]$, or concentrates only at the boundary highest-risk location, depending on the risk function $h$. More detailed discussions on the implications of our theoretical results, along with numerical simulations, will be given in section 4.

We would like to make some remarks on Theorem \ref{th2.3} as follows.

\begin{re}\label{re-th2.3-a} It is worth mentioning that all the statements in Theorem \ref{th2.3}
except the expression \eqref{th2.3-e} for the Radon measure $\mu$ remain true provided that the risk function $h\in C^1([0,L])$. Such a comment also applies to Lemmas \ref{l2.3}-\ref{l2.4a} in the forthcoming section.
\end{re}

\begin{re}\label{re-th2.3-b}
\begin{enumerate}
  \item[{\rm(i)}] It is clear that Theorem \ref{th2.3}(i) holds if $h<\Lambda$ is a constant or more generally $h$ is a unique solution to the following problem:
\begin{equation}\nonumber
\begin{cases}
 -d_Sh_{xx}=\Lambda-h,\ \  \ x\in(0,L),\vspace{1mm} \\
h(0)=\sigma_1,\ \ h(L)=\sigma_2,
\end{cases}
\end{equation}
where $0<\sigma_1,\,\sigma_2<\Lambda$.

When $h_x(0)>0$, the change of the derivatives from $S_x(0)=0$ to $\hat S_x(0)=h_x(0)>0$ would suggest that $I$ should experience the concentration phenomenon at $x=0$ (that is, $I(0)\to\infty$) as $d_I\to0$. The same remark applies to the case of $h_x(L)<0$.

  \item[{\rm(ii)}] In contrast to Theorem \ref{th2.3}(i), it is easily seen that $\hat S\not\equiv h$ on $[0,L]$ provided that $-d_Sh_{xx}(x^*)>\Lambda-h(x^*)$ for some $x^*\in(0,L)$.

  \item[{\rm(iii)}] Clearly, the assertions of Theorem \ref{th2.3}(ii)-(b1) hold if $h_x(L)=0$ and the assertions of  Theorem \ref{th2.3}(ii)-(c1) hold if $h_x(0)=0$.

   \item[{\rm(iv)}] In a general case that $\Theta_{h}$ contains an interior isolated point and $h_x$ is non-decreasing in a neighbourhood of such a point, we can conclude that \eqref{th2.3-a} and  \eqref{th2.3-b} hold in some neighbourhood of this point; if $\Theta_{h}$ contains an interval, a similar conclusion also holds. See Lemma \ref{l2.3} and Lemma \ref{l2.3a} below.

\end{enumerate}

\end{re}

\section{Proof of main results:\ Theorems \ref{th2.1}, \ref{th2.2} and \ref{th2.3}}
\setcounter{equation}{0}

This section is devoted to the proof of Theorems \ref{th2.1}, \ref{th2.2} and \ref{th2.3}.

\subsection{Proof of Theorem \ref{th2.1}}

In this subsection, we present the proof of Theorem \ref{th2.1}.

\begin{proof}[Proof of Theorem \ref{th2.1}] First of all, we recall that for any EE $(S,I)$ of \eqref{SIS}, from \cite{WJL} (see (3.3) there), the following holds:
 \bes
 k_{min}\leq S(x)\leq\max_{[0,L]} k(x),\ \ \forall x\in[0,L].
 \label{2.1}
 \ees

By the positivity of $I$ and the uniqueness of the principal eigenvalue,
it is clear from the equation of $I$ that
$$
\lambda_1(d_I,\gamma-\beta S)=0,\ \  \forall d_I>0,
$$
where $\lambda_1(d_I,\gamma-\beta S)$ is defined as in the appendix. Using Theorem \ref{mainthm1},
as $d_I\to0$ (up to a subsequence), we see that $S\to \hat S$ uniformly on $[0,L]$ for some positive function $\hat S$. Hence, by Lemma \ref{a2.1} in the appendix and the continuous dependence of the principal eigenvalue on the weight function $\gamma-\beta S$, we have
$$
0=\lim_{d_I\to0}\lambda_1(d_I,\gamma-\beta S)=\min_{x\in[0,L]}[\gamma(x)-\beta(x)\hat S(x)].
$$
This obviously implies that
\bes\label{3.1}
\hat S(x)\leq k(x),\ \ \forall x\in[0,L]\ \ \mbox{and}\ \ \hat S(y_0)=k(y_0)
\ees
for some $y_0\in[0,L]$.

From Theorem \ref{mainthm1}, we recall that $I\to\mu$ weakly
for some Radon measure $\mu$ with $\mu([0,L])>0$ in the following sense
\bes\label{3.2}
\int_0^LI(x)\zeta(x){\rm d}x\to\int_0^L\zeta(x)\mu({\rm d}x),\ \ \forall \zeta\in C([0,L]),\ \ \mbox{as}\ d_I\to0.
\ees

We now integrate the first equation in \eqref{SIS} by parts over $[0,L]$ and use the boundary conditions to deduce that
\bes\label{3.3}
\int_0^L[\beta(x)S(x)-\gamma(x)]I(x){\rm d}x=0,\ \ \ \forall d_I>0.
\ees
Letting $d_I\to0$ in \eqref{3.3}, combined with \eqref{3.2} and the fact that $S\to \hat S$
uniformly on $[0,L]$ as $d_I\to0$, we infer that
\bes\label{3.4}
\int_{[0,L]}[\beta(x)\hat S(x)-\gamma(x)]\mu({\rm d}x)=0,
\ees
which, together with \eqref{3.1}, gives
\bes\nonumber
\int_{\{x\in[0,L]:\ \hat S(x)<k(x)\}}\beta(x)[\hat S(x)-k(x)]\mu({\rm d}x)=\int_{[0,L]}\beta(x)[\hat S(x)-k(x)]\mu({\rm d}x)=0.
\ees
As a result, we find that
\bes\label{3.5}
\mu(\{x\in[0,L]:\ \hat S(x)<k(x)\})=0
\ees
and
\bes\label{3.6}
\mu(\{x\in[0,L]:\ \hat S(x)=k(x)\})=\mu([0,L])>0.
\ees

In view of \eqref{3.3} and $\int^{L}_{0}(S(x)+I(x))\,{\rm d}x=N$, for any $d_I>0$ we have
\bes\label{3.7}
\int_0^LS(x)I(x){\rm d}x\leq\frac{1}{\min_{[0,L]}\beta(x)}\int_0^L\gamma(x)I(x){\rm d}x\leq\frac{\max_{[0,L]}\gamma(x)}{\min_{[0,L]}\beta(x)}N,\ \ \ \forall d_I>0.
\ees
Then, applying the $L^1$-theory for elliptic equation (see Lemma \ref{a2.2} in the appendix) to
the $S$-equation, one sees that for any $1\leq r<\infty$,
\bes\label{3.8}
\|S\|_{W^{1,r}(0,L)}\leq C,\ \ \ \forall d_I>0.
\ees
Hereafter, $C$ or $C(\epsilon)$ is a positive constant independent of $d_I>0$ but may be different from place to place.
Taking $r=2$ in \eqref{3.8}, we note that $W^{1,2}(0,L)$ is a Hilbert space and $W^{1,2}(0,L)$ is compactly embedded to $C([0,L])$.
Thus, we may assume that $S\to\hat S$ weakly in $W^{1,2}(0,L)$ and $S\to\hat S$ uniformly on $[0,L]$ as $d_I\to0$. Now, for any $\zeta\in W^{1,2}(0,L)$
(and so $\zeta\in C([0,L])$), we get from the $S$-equation that
\bes\label{3.9}
d_S\int_0^LS_x(x)\zeta_x(x){\rm d}x=\int_0^L[-\beta(x)S(x)+\gamma(x)]I(x)\zeta(x){\rm d}x,\ \ \ \forall d_I>0.
\ees
By virtue of \eqref{3.2}, \eqref{3.5} and \eqref{3.6}, we can send $d_I\to0$ in \eqref{3.9} to obtain
 \bes\nonumber
 d_S\int_0^L\hat S_x(x)\zeta_x(x){\rm d}x=0,\ \ \forall \zeta\in W^{1,2}(0,L).
\ees
This means that $\hat S$ is a weak (and then a classical) solution of
 \bes\nonumber
 -u_{xx}(x)=0,\ \ x\in(0,L);\ \ \ \ u_x(0)=u_x(L).
\ees
Consequently, $\hat S$ must be a positive constant. It then follows from \eqref{3.1} that
$\hat S=k_{min}$, and so $S(x)\to k_{min}$ uniformly on $[0,L]$.

In the sequel, we are going to determine the limit of $I$. We first consider case (i):
$\Theta_{k}=\{x_0\}$ is a singleton. By what was proved above, it is easily seen that
$$
I(x)\to(N-Lk_{min})\delta(x_0)\ \ \mbox{ weakly in the sense of \eqref{wks}},
$$
where $\delta(x_0)$ is the Dirac measure centered at $x_0$.

It remains to show $I(x)\to0$ locally uniformly in $[0,L]\setminus\{x_0\}$. We only consider the case of $x_0\in(0,L)$, and the case $x_0=0$ or $L$ can be handled similarly. Since $S(x)\to k_{min}$ uniformly on $[0,L]$, by the definition of $k_{min}$, we know from the $I$-equation that, given small $\epsilon>0$, $I_{xx}>0$ on $[0,x_0-\epsilon]\cup[x_0+\epsilon,L]$ as long as $d_I$ is small enough. As $I_x(0)=I_x(L)=0$, $I$ is increasing in $[0,x_0-\epsilon]$ while is decreasing in $[x_0+\epsilon,L]$. Thus, due to the arbitrariness of $\epsilon$, it readily follows from \eqref{3.5} that $I(x)\to0$ locally uniformly in $[0,x_0)\cup(x_0,L]$, as claimed.

\vskip 10pt We next consider case (ii): $\Theta_{k}=[\varrho_1,\varrho_2]\subset(0,L)$.
First of all, we can assert that $I(x)\to0$ locally uniformly in $[0,L]\setminus{[\varrho_1,\varrho_2]}$ by a similar argument as in case (i). In what follows, we will analyze the limiting behavior of $I$ in the interval $[\varrho_1,\varrho_2]$.
To this end, let us introduce the following function
 $$
 w(x)=\frac{S(x)-k_{min}}{d_I},\ \ \ x\in[0,L].
 $$
Due to \eqref{2.1}, $w\geq0$ on $[0,L]$. In addition, by our assumption, one notices that $w$ solves
\bes\label{3.10}
-d_Sw_{xx}(x)=-\beta(x)Iw,\ \ \ x\in[\varrho_1,\varrho_2],
\ees
and $I$ satisfies
 \bes\label{3.11}
-I_{xx}(x)=\beta(x)wI,\ \ \ x\in[\varrho_1,\varrho_2].
\ees
Since $\int_0^L I(x){\rm d}x\leq N$, for any small $\epsilon>0$, Lemma \ref{a2.3}(b) in the appendix can be applied to \eqref{3.10} to assert that
 \bes\label{3.12}
\max_{x\in[\varrho_1+\epsilon,\varrho_2-\epsilon]}w(x)\leq C(\epsilon)\min_{x\in[\varrho_1+\epsilon,\varrho_2-\epsilon]}w(x).
\ees

We now claim that $w$ is uniformly bounded on $[\varrho_1+\epsilon,\varrho_2-\epsilon]$ for all small $d_I>0$. Otherwise,
there is a sequence of $d_I$, labelled by itself for simplicity, such that the corresponding solution sequence $\{(w,I)\}$ satisfies
 \bes\label{3.12-a}
\max_{x\in[\varrho_1+\epsilon,\varrho_2-\epsilon]}w(x)\to\infty,\ \  \mbox{ as $d_I\to0$.}
\ees
By \eqref{3.12}, $w\to\infty$ uniformly on $[\varrho_1+\epsilon,\varrho_2-\epsilon]$ as $d_I\to0$. To produce a contradiction, let us denote $\lambda_1^\mathcal{D}$ to be the principal eigenvalue of the following eigenvalue problem with Dirichlet boundary conditions:
\begin{equation}\label{3.13}
\begin{cases}
 -\varphi_{xx}=\lambda\varphi,\ \ x\in(\varrho_1+\epsilon,\varrho_2-\epsilon)\vspace{1mm} \\
\varphi(\varrho_1+\epsilon)=\varphi(\varrho_2-\epsilon)=0.
\end{cases}
\end{equation}
Apparently, $\lambda_1^\mathcal{D}>0$. For all small $d_I>0$, by \eqref{3.12-a} we may assume that
$$\beta(x)w(x)>2\lambda_1^\mathcal{D}\  \ \mbox{ on $[\varrho_1+\epsilon,\varrho_2-\epsilon]$.}
$$
Thus, it follows from \eqref{3.11} that $I\in C^2([0,L])$ is a positive and strict supersolution of the following operator in the sense of \cite[Definition 2.1]{PZ2}:
\begin{equation}\nonumber
\begin{cases}
\mathcal{L}u:=-u_{xx}-2\lambda_1^\mathcal{D}u,\ \ x\in(\varrho_1+\epsilon,\varrho_2-\epsilon),\ \ \forall u\in C^2([0,L]),\vspace{1mm} \\
u(\varrho_1+\epsilon)=u(\varrho_2-\epsilon)=0.
\end{cases}
\end{equation}
By means of \cite[Proposition 2.1]{PZ2}, the principal eigenvalue, denoted by $\tilde\lambda_1^\mathcal{D}$, of the eigenvalue problem
\begin{equation}\nonumber
\begin{cases}
\mathcal{L}\varphi_{xx}=\lambda\varphi,\ \ \ \ x\in(\varrho_1+\epsilon,\varrho_2-\epsilon),\vspace{1mm} \\
\varphi(\varrho_1+\epsilon)=\varphi(\varrho_2-\epsilon)=0
\end{cases}
\end{equation}
satisfies $\tilde\lambda_1^\mathcal{D}>0.$

On the other hand, the uniqueness of the principal eigenvalue of problem \eqref{3.13} implies $\tilde\lambda_1^\mathcal{D}+2\lambda_1^\mathcal{D}=\lambda_1^\mathcal{D}$, and so $\tilde\lambda_1^\mathcal{D}=-\lambda_1^\mathcal{D}<0$, leading to a contradiction. The previous claim is thus verified. Due to the arbitrariness of $\epsilon$, we have shown that $w$ is locally uniformly bounded in $(\varrho_1,\varrho_2)$ with respect to all small $d_I>0$.

Furthermore, by Lemma \ref{a2.2} in the appendix, it is easy to see from \eqref{3.11} that $I$ is locally uniformly bounded in $(\varrho_1,\varrho_2)$ independent of all small $d_I>0$. The standard regularity theory for elliptic equations can be applied to \eqref{3.10} and \eqref{3.11}, respectively to deduce that $w$ and $I$ are locally bounded (independent of small $d_I$) in $(\varrho_1,\varrho_2)$ in the usual $C^{2+\alpha}$-norm for some $\alpha\in(0,1)$. Then, by a diagonal argument, we may assume that
$$
(w,I)\to(\hat w,\hat I)\ \ \mbox{ in}\ C_{loc}^2(\varrho_1,\varrho_2),\ \ \mbox{ as}\ \, d_I\to0.
$$
Clearly, by \eqref{3.11}, $(\hat w,\hat I)$ satisfies
 \bes\label{3.11a}
-\hat I_{xx}(x)=\beta(x)\hat w\hat I,\ \ \ x\in(\varrho_1,\varrho_2).
\ees
Furthermore, by adding \eqref{3.10} and \eqref{3.11}, one easily sees that $(\hat w,\hat I)$ solves
$$-(d_S\hat w+\hat I)_{xx}=0\  \  \mbox{ in $(\varrho_1,\varrho_2)$.}
$$
This indicates that
 \bes\label{3.14}
d_S\hat w(x)+\hat I(x)=\hat a+\hat bx,\ \ x\in(\varrho_1,\varrho_2)
 \ees
for some constants $\hat a,\,\hat b$.

In what follows, we aim to determine $\hat a$ and $\hat b$. By a simple observation, $(w,I)$ satisfies
\begin{equation}\nonumber
\begin{cases}
-(d_S w+I)_{xx}=0,\ \ x\in(0,L),\vspace{1mm} \\
(d_S w+I)_{x}=0,\ \ \ \ \,\ x=0,L.
\end{cases}
\end{equation}
Thus, $d_Sw+ I=c_{d_I}$ is a positive constant on $[0,L]$ for any $d_I>0$. Recall that $w,\, I$ are locally uniformly bounded in $(\varrho_1,\varrho_2)$. Hence, as $d_I\to0$, we may assume that
$$d_Sw+ I=c_{d_I}\to\hat c\in[0,\infty)\ \  \mbox{ uniformly on $[0,L]$.}
$$
From \eqref{3.14} it follows that $\hat c=\hat a$ and $\hat b=0$. In addition, our analysis indicates that $w$ and $I$ are uniformly bounded on $[0,L]$. Precisely, it holds that
 \bes\label{3.15}
 w(x),\ \ I(x)\leq C,\ \ \forall x\in[0,L].
 \ees

We now use the equation of $I$, together with the fact of $w,\,I\geq0$ and the definition of $k$, to find that
 \begin{align}
-I_{xx}&=\frac{\beta(x)\left[S-k(x)\right]I}{d_I}\nonumber \\
& =\beta(x)\left[\frac{S-k_{min}}{d_I}+\frac{k_{min}-k(x)}{d_I}\right]I\label{3.16}\\
&\leq \beta(x)wI,\ \ \ \  \ \ x\in(0,L).
\nonumber
\end{align}
Multiplying both sides in \eqref{3.16} by $I$ and integrating over $(0,L)$, we obtain
 $$
 \int_0^L(I_x)^2{\rm d}x\leq\int_0^L\beta wI^2{\rm d}x\leq C
 $$
due to \eqref{3.15}. This and \eqref{3.15} imply that $\|I\|_{W^{1,2}(0,L)}\leq C$. Since $W^{1,2}(0,L)$ is compactly embedded to $C([0,L])$, we can assume that $I\to\hat I$ uniformly on $[0,L]$. By what was proved before, $\hat I=0$ on $[0,\varrho_1]\cup[\varrho_2,L]$, and by \eqref{3.11a} and \eqref{3.14}, on $[\varrho_1,\varrho_2]$, $\hat I$ solves
\begin{equation}\label{3.17}
\begin{cases}
-\hat I_{xx}=\frac{\beta(x)}{d_S}(\hat a-\hat I)\hat I,\ \ &\varrho_1<x<\varrho_2,\vspace{1mm} \\
\hat I=0,\ \ &x=\varrho_1,\,\varrho_2.
\end{cases}
\end{equation}
Because of $\int^{L}_{0}(S(x)+I(x))\,{\rm d}x=N$ and $S\to k_{min}$ uniformly on $[0,L]$ as $d_I\to0$, it is easily seen that
 \bes\label{3.18}
 \int_{\varrho_1}^{\varrho_2}\hat I\,{\rm d}x=N-Lk_{min}>0.
 \ees
Thanks to the Harnack inequality (see Lemma \ref{a2.3}(b)) and \eqref{3.18}, we have from \eqref{3.17} that $\hat I>0$ in $(\varrho_1,\varrho_2)$. By \eqref{3.14} and the fact of $\hat b=0$, clearly $\hat a>0$.

It is well known that given $\hat a>0$, the positive solution of problem \eqref{3.17}, if it exists, must be unique, denoted by $\hat I_{\hat a}$; moreover, if $0<\hat a_1<\hat a_2$, then $\hat I_{\hat a_1}(x)<\hat I_{\hat a_2}(x)$ for all $x\in(\varrho_1,\,\varrho_2)$. With these facts, one can check that the positive constant $\hat a$ is uniquely determined by \eqref{3.18} in an implicit manner. Therefore, all the assertions in case (ii) have been verified. The proof is thus complete.
\end{proof}

\subsection{Proof of Theorem \ref{th2.2}}

We are now in a position to give the proof of Theorem \ref{th2.2}.

\begin{proof}[Proof of Theorem \ref{th2.2}] First of all, one can follow the analysis of Theorem \ref{th2.1},
combined with the result of Theorem \ref{mainthm2} and its proof (see \cite[Theorem 3.2]{LPW1}), to show that
as $d_I\rightarrow0$, any EE $\left(S,I\right)$ of \eqref{SIS-2} satisfies (up to a subsequence of $d_I$) that
$S\to\hat S$ weakly in $W^{1,2}(0,L)$ and uniformly on $[0,L]$, and $I\to\mu$ weakly in the sense of \eqref{wks} for some Radon measure $\mu$ and positive function $\hat S\in W^{1,2}(0,L)$, and
\bes\label{3.aa}
0<\hat S(x)\leq h(x),\ \ \forall x\in[0,L],
\ees
and \eqref{th2.2-3} hold.

For any $\zeta\in W^{1,2}(0,L)$ (and so $\zeta\in C([0,L])$), we use the $S$-equation to obtain
 \begin{align}
d_S\int_0^LS_x\zeta_x{\rm d}x&=\int_0^L[\Lambda-S-\beta(x)SI+\gamma(x)I]\zeta{\rm d}x\nonumber \\
& =\int_0^L[\Lambda-S-\eta(x)I]\zeta{\rm d}x-\int_0^L[\beta(x)S-(\gamma(x)+\eta(x))]I\zeta{\rm d}x\label{3.20}
\end{align}
for all $d_I>0$. In view of \eqref{3.aa} and \eqref{th2.2-3}, we send $d_I\to0$ to infer that
$$
\int_0^L[\beta(x)S-(\gamma(x)+\eta(x)]I\zeta{\rm d}x\to\int_{[0,L]}\beta(x)[\hat S-h(x)]\zeta\mu({\rm d}x)=0.
$$
Thus, by letting $d_I\to0$, it follows from \eqref{3.20} that
\bes\label{3.20a}
d_S\int_0^L\hat S_x\zeta_x{\rm d}x=\int_0^L(\Lambda-\hat S)\zeta{\rm d}x-\int_0^L \eta(x)\zeta\mu({\rm d}x),\ \ \forall \zeta\in W^{1,2}(0,L).
\ees
Together with \eqref{th2.2-3}, this means that $\hat S\in W^{1,2}(0,L)$ is a weak solution of \eqref{th2.2-1}.

In what follows, for a general positive H\"{o}lder continuous function $h$, we will prove three claims:

\vskip6pt
{\bf Claim 1.} If the minimum of $h$ is attained at $x=0$ (resp. at $x=L$), then $\hat S$ must touch $h$ at this point; that is, $\hat S(0)=h(0)=h_{min}$ (resp. $\hat S(L)=h(L)=h_{min}$).

We only handle the case that $h_{min}$ is attained at $x=0$, and the other case can be treated similarly.
Since $\hat S\leq h$ on $[0,L]$, we suppose that $\hat S(0)<h(0)$ and so $\hat S(x)<h(x)$ on $[0,\epsilon_0]$ for some small $\epsilon_0>0$. Thus, from \eqref{th2.2-1}, we have $-d_S\hat S_{xx}=\Lambda-\hat S,\ \forall x\in(0,\epsilon_0]$.
A simple analysis shows that
\bes\label{3.21}
\hat S(x)=c_1e^{d_S^{-1/2}x}+c_2e^{-d_S^{-1/2}x}+\Lambda,\,\  x\in(0,\epsilon_0]
\ees
for some constants $c_1,\,c_2$. On the other hand,
using the $S$-equation, we integrate on $[0,x]$ to deduce
\bes\label{3.21-aa}
-S_x(x)=\frac{1}{d_S}\int_0^x[\Lambda-S(y)-\beta(y)S(y)I(y)+\gamma(y)I(y)]{\rm d}y,\ \ x\in[0,\epsilon_0].
\ees
From the proof of \cite[Theorem 3.2]{LPW1}, we know that
\bes\label{3.21-ab}
\int_0^L S(x)I(x){\rm d}x\leq C,\ \ \int_0^L I(x){\rm d}x\leq C,\ \mbox{and}\  S(x)\leq C,\ \ \ \forall x\in[0,L],
\ees
for some positive constant $C$, independent of $d_I>0$.

In the sequel, the constant $C$ allows to vary from line to line but does not depend on $d_I>0$. It immediately follows from \eqref{3.21-aa} that $S_x$ is uniformly bounded on $[0,\epsilon_0]$, independent of $d_I>0$.
Note that $\mu([0,\epsilon_0])=0$ due to \eqref{th2.2-3}, and $I\to\mu$ weakly in the sense of \eqref{wks}.
Given any small $\epsilon>0$, we can find a small $\rho>0$ so that for all $0<d_I\leq\rho$,
$$
\int_0^{\epsilon_0}I(x){\rm d}x\leq\epsilon+\int_{[0,\epsilon_0]}\mu({\rm d}x)=\epsilon.
$$
Now, for any $x_1,\,x_2\in[0,\epsilon_0]$ satisfying $|x_1-x_2|<\epsilon$, we have
\begin{align}
\big|S_x(x_1)-S_x(x_2)\big|&=\frac{1}{d_S}\Big|\int_{x_1}^{x_2}[\Lambda-S(y)-\beta(y)S(y)I(y)+\gamma(y)I(y)]{\rm d}y\Big|\nonumber \\
&\leq C|x_1-x_2|+C\int_{x_1}^{x_2}I(y){\rm d}y\nonumber \\
&\leq C|x_1-x_2|+C\int_{0}^{\epsilon_0}I(y){\rm d}y\leq C\epsilon\nonumber
\end{align}
provided that $0<d_I\leq\rho$. This shows that $S_x$ is equi-continuous on $[0,\epsilon_0]$ once $0<d_I\leq\rho$.

Hence, we can apply the well-known Ascoli-Arzel\`{a} theorem, up to a further subsequence of $d_I$, to conclude that $S_x$ is uniformly convergent on $[0,\epsilon_0]$ as $d_I\to0$. As
$$
S(x)-S(0)=\int_0^x S_x(y){\rm d}y,\ \ \ S\to\hat S\ \ \mbox{uniformly on}\ [0,\epsilon_0],
$$
it is easily seen that $S\to\hat S$ in $C^1([0,\epsilon_0])$. Thus, $\hat S_x(0)=0$, and in turn we get from \eqref{3.21} that $c_1=c_2$.
Because of $\hat S\leq h$ on $[0,L]$ and the condition \eqref{cond}, we have $c_1=c_2<0$, and so
$$\hat S_x(x)=c_1[e^{d_S^{-1/2}x}-e^{-d_S^{-1/2}x}]<0,\ \ \, \forall x\in(0,\epsilon_0].$$
This means that $\hat S$ is decreasing on $[0,\epsilon_0]$.

By virtue of $h(0)\leq h(x)$ for all $x\in[0,L]$ and \eqref{th2.2-3}, one can extend the above analysis to assert that $\hat S$ is decreasing on $[0,L]$ and so
$\hat S<h$ on $[0,L]$. This clearly gives $\mu([0,L])=0$, a contradiction with $\mu([0,L])>0$ due to \eqref{th2.2-3} again. Hence, we must have $\hat S(0)=h(0)=h_{min}$.

\vskip6pt {\bf Claim 2.} If $\hat S$ attains its local minimum at some $x_0\in(0,L)$, then $\hat S$ must touch $h$ at this point; that is,
$\hat S(x_0)=h(x_0)$.

Suppose that $\hat S(x_0)<h(x_0)$ due to $\hat S\leq h$. Thus, there is a small $\epsilon_0>0$ such that $\hat S(x)<h(x)$ for all $x\in[x_0-\epsilon_0,x_0+\epsilon_0]\subset(0,L)$. By \eqref{th2.2-3}, $\mu([x_0-\epsilon_0,x_0+\epsilon_0])=0$ and so
$$-d_S\hat S_{xx}=\Lambda-\hat S\ \ \mbox{ on $[x_0-\epsilon_0,x_0+\epsilon_0]$.}
$$
As before, $\hat S$ takes the form of
\eqref{3.21} on $[x_0-\epsilon_0,x_0+\epsilon_0]$ for some constants $c_1,\,c_2$. Obviously, $\hat S_x(x_0)=0$, which leads to $c_2=c_1e^{2d_S^{-1/2}x_0}$, and so $c_1<0$. Thus, it holds that
\bes\label{3.22}
\hat S(x)=c_1[e^{d_S^{-1/2}x}+e^{d_S^{-1/2}(2x_0-x)}]+\Lambda,\,\ \ x\in[x_0-\epsilon_0,x_0+\epsilon_0]
\ees
for some constant $c_1<0$.
In view of \eqref{3.22}, basic computation gives that $\hat S$ is increasing on $[x_0-\epsilon_0,x_0]$ while is decreasing on $[x_0,x_0+\epsilon_0]$.
This implies that $x_0$ is a local maximum of $\hat S$, a contradiction with our assumption. As a result, $\hat S$ must touch $h$ at $x=x_0$.

\vskip6pt
{\bf Claim 3.} If the minimum of $h$ is attained at some point $y_0\in(0,L)$, then $\hat S$ must touch $h$ at this point; that is, $\hat S(y_0)=h(y_0)=h_{min}$.

Suppose that $\hat S(y_0)<h(y_0)=h_{min}$. There are two possible cases to happen in the interval $[0,y_0)$:
Case 1. $\hat S$ never touches $h$ in $[0,y_0)$, that is, $\hat S<h$ in $[0,y_0)$; Case 2. $\hat S$ touches $h$ somewhere in $[0,y_0)$.

When Case 1 occurs, by \eqref{th2.2-3}, we know that $\hat S$ must touch $h$ in $(y_0,L]$. Let $y_1$ be the first point (from the left side) at which $\hat S$ touches $h$. That is, $y_1\in(y_0,L]$, and
$$\hat S(x)<h(x), \  \ \forall x\in(y_0,y_1),\ \  \ \hat S(y_1)=h(y_1)\geq h_{min}.
$$
On the other hand, since $\hat S<h$ in $[0,y_0)$, we can follow the analysis used in {\bf Claim 1} to show that $\hat S$ is decreasing on $[0,y_1]$. This is an obvious contradiction with $\hat S(y_0)<h_{min}\leq \hat S(y_1)$.

When Case 2 occurs, we denote by $y_2\in[0,y_0)$ the first point from the right side such that $\hat S$ touches $h$ in $[0,y_0)$. That is,
$$\hat S(x)<h(x),\ \ \forall x\in(y_2,y_0),\ \ \ \hat S(y_2)=h(y_2)\geq h_{min}.
$$ If $\hat S$ does not touch $h$ in $(y_0,L]$. By a similar argument to the proof of {\bf Claim 1} and appealing to the fact of $S_x(L)=0$, one sees that $\hat S$ is increasing in $(y_2,L]$, leading to $\hat S(y_2)<\hat S(y_0)$, which contradicts with $\hat S(y_2)\geq h_{min}>\hat S(y_0)$. Hence, it is necessary that $\hat S$ touches $h$ in $(y_0,L]$. Let $y_3$  be the first point where $\hat S$ touches $h$ in $(y_0,L]$. Thus, $\hat S(x)<h(x)$ for all $x\in(y_0,y_3)$ and $\hat S(y_3)=h(y_3)\geq h_{min}$. Therefore, $\hat S(x)<h(x)$ in the interval $(y_2,y_3)$, $\hat S(y_0)<h(y_0)=h_{min}$ and $\hat S(y_2),\,\hat S(y_3)\geq h_{min}$. This implies that on $[y_2,y_3]$, $\hat S$ must attain its minimum at some $y_4\in(y_2,y_3)$. By {\bf Claim 2}, we can conclude that $\hat S(y_4)=h(y_4)$, a contradiction again. So far, we have verified {\bf Claim 3}.

A similar reasoning as that of proving {\bf Claim 3} yields $\hat S\geq h_{min}$ on $[0,L]$. Thus \eqref{th2.2-2} holds. Thanks to  {\bf Claim 1} and {\bf Claim 3}, \eqref{th2.2-2a} is true. It is also apparent that {\bf Claim 2} implies \eqref{th2.2-2b}. The proof is now complete.
\end{proof}



\subsection{Proof of Theorem \ref{th2.3}}
This subsection is devoted to the proof of Theorem \ref{th2.3}. We begin with some lemmas as follows.

\begin{lem}\label{l2.3} Assume that $h\in C^2([0,L])$ and $h_{x}$ is non-decreasing in some neighborhood of $\varrho_0\in\Theta_h$. Let $\hat S$ and $\mu$ be given as in Theorem \ref{th2.2}. Then there exists a small $\epsilon_0>0$ such that
$$
\hat S(x)=h(x),\ \ \ \forall x\in(\varrho_0-\epsilon_0,\varrho_0+\epsilon_0)\cap(0,L)
$$
and
$$
\mu(\{x\})=\frac{\Lambda-h+d_Sh_{xx}}{\eta(x)},\ \ \ \mbox{ a.e. for}\ \, x\in(\varrho_0-\epsilon_0,\varrho_0+\epsilon_0)\cap(0,L).
$$

\end{lem}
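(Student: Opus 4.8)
The plan is to combine the qualitative information already extracted in Theorem \ref{th2.2} with a purely one-dimensional ODE-and-convexity analysis localized at the touching point $\varrho_0$. To set up, recall from Theorem \ref{th2.2} that $\hat S\in C([0,L])\cap W^{1,2}(0,L)$ with $h_{min}\le\hat S\le h$ on $[0,L]$, that $\mu$ charges only the coincidence set $\{\hat S=h\}$ by \eqref{th2.2-3}, and that $\hat S$ solves \eqref{3.20a} weakly. Since $\varrho_0\in\Theta_h$, \eqref{th2.2-2a} gives $\hat S(\varrho_0)=h(\varrho_0)=h_{min}$, so $\varrho_0$ is a global minimizer of both $h$ and $\hat S$. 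I would fix $\delta>0$ so that $h_x$ is non-decreasing (hence $h_{xx}\ge0$) on $N:=(\varrho_0-\delta,\varrho_0+\delta)$; for interior $\varrho_0$ this also forces $h_x(\varrho_0)=0$ (the boundary case $\varrho_0\in\{0,L\}$ is handled by the same argument using the Neumann condition $\hat S_x=0$). On the open set $U:=\{x:\hat S(x)<h(x)\}$ the measure $\mu$ vanishes by \eqref{th2.2-3}, so \eqref{3.20a} reduces to $-d_S\hat S_{xx}=\Lambda-\hat S$ there, and by elliptic regularity (continuous right-hand side) $\hat S\in C^2(U)$ is a classical solution.

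The first and principal goal is to show that $U$ does not accumulate at $\varrho_0$. On $U\cap N$ set $w=h-\hat S>0$; differentiating and using the ODE gives $w_{xx}=h_{xx}+(\Lambda-\hat S)/d_S>0$, where strict positivity uses $h_{xx}\ge0$ and $\hat S\le h<\Lambda$ from \eqref{cond}. Thus $w$ is strictly convex on every component of $U$ contained in $N$. I would argue by contradiction: suppose there are $x_n\to\varrho_0^{+}$ with $x_n\in U$, lying in components $(\alpha_n,\beta_n)$; since $\varrho_0\notin U$ one has $\alpha_n\ge\varrho_0$ and $\alpha_n\to\varrho_0$. Two cases arise. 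If infinitely many components satisfy $\beta_n<\varrho_0+\delta$, then $w(\alpha_n)=w(\beta_n)=0$ while $w>0$ and strictly convex inside, which is impossible since a convex function lies below its chord. Otherwise all but finitely many satisfy $\beta_n\ge\varrho_0+\delta$; being pairwise disjoint yet each containing $\varrho_0+\delta/2$, they must coincide in a single component $(\varrho_0,\beta_*)$ with $\beta_*\ge\varrho_0+\delta$.

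To eliminate this remaining case I would first pin down the one-sided derivative at $\varrho_0$. From $w\ge0$ and $w(\varrho_0)=0$ we get $w_x(\varrho_0^{+})\ge0$; on the other hand $h_x(\varrho_0)=0$ and $\hat S_x(\varrho_0^{+})\ge0$ (as $\varrho_0$ minimizes $\hat S$), so $w_x(\varrho_0^{+})=-\hat S_x(\varrho_0^{+})\le0$, forcing $\hat S_x(\varrho_0^{+})=0$. Solving $-d_S\hat S_{xx}=\Lambda-\hat S$ on $(\varrho_0,\beta_*)$ with data $\hat S(\varrho_0)=h_{min}$ and $\hat S_x(\varrho_0^{+})=0$ yields the explicit solution $\hat S(x)=\Lambda-(\Lambda-h_{min})\cosh\!\big(d_S^{-1/2}(x-\varrho_0)\big)$, which is strictly less than $h_{min}$ for $x>\varrho_0$ and thus contradicts $\hat S\ge h_{min}$. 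Hence no such sequence exists; the symmetric reasoning on the left gives $\epsilon_0>0$ with $U\cap(\varrho_0-\epsilon_0,\varrho_0+\epsilon_0)=\emptyset$, i.e. $\hat S\equiv h$ on $(\varrho_0-\epsilon_0,\varrho_0+\epsilon_0)\cap(0,L)$.

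For the formula for $\mu$, on this neighborhood $\hat S=h\in C^2$, so for any $\zeta\in C_c^\infty\big((\varrho_0-\epsilon_0,\varrho_0+\epsilon_0)\big)$ I substitute $\hat S=h$ into \eqref{3.20a} and integrate $d_S\int h_x\zeta_x\,{\rm d}x$ by parts to obtain $\int\eta\zeta\,{\rm d}\mu=\int(\Lambda-h+d_Sh_{xx})\zeta\,{\rm d}x$. As $\zeta$ is arbitrary, $\mu$ is absolutely continuous on the neighborhood with density $(\Lambda-h+d_Sh_{xx})/\eta$, which is exactly the claimed expression and is nonnegative since $h_{xx}\ge0$ and $h<\Lambda$, consistent with $\mu\ge0$. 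I expect the genuine obstacle to be the second and third paragraphs, namely the covering/structure argument that rules out components of $U$ clustering at $\varrho_0$; the delicate point is the component that escapes the convexity window $N$, which is defeated only through the explicit $\cosh$ profile violating the lower bound $\hat S\ge h_{min}$, together with the determination $\hat S_x(\varrho_0)=0$.
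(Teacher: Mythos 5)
Your proof is correct for interior $\varrho_0$, and it runs on the same two mechanisms as the paper's proof, but with a different case decomposition. The paper trichotomizes the position of $\varrho_0$ relative to the coincidence set $\{\hat S=h\}$: (1) isolated point, (2) accumulation point, (3) interior point, and then rules out (1) and (2). It kills (1) by showing $\hat S_x(\varrho_0)=0$ (via interior regularity for \eqref{th2.2-1} and $\hat S\ge h_{min}$) and then using $-d_S\hat S_{xx}=\Lambda-\hat S$ to force $\hat S<h_{min}$ in a punctured neighborhood; it kills (2) by picking consecutive touching points $z_1<z_2$ near $\varrho_0$ with $\hat S<h$ in between and playing the concavity $\hat S_{xx}<0$ against the monotonicity of $h_x$ --- which is exactly your Case A chord inequality for $w=h-\hat S$, written in derivative form. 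Your Case B (a single component $(\varrho_0,\beta_*)$ of $\{\hat S<h\}$ escaping the convexity window) is the one-sided analogue of the paper's case (1), with the explicit $\cosh$ profile replacing the paper's monotonicity statement. Your organization buys two things. First, it covers the configuration where $\hat S\equiv h$ on one side of $\varrho_0$ while $\{\hat S<h\}$ has one long component on the other side; as literally argued, this falls between the paper's cases (1) and (2), since then no pair $z_1<z_2$ with $\hat S<h$ strictly in between exists near $\varrho_0$. Second, by working only with one-sided derivatives inside the region where the ODE holds classically, you avoid the paper's blanket assertion $\hat S\in C^1(0,L)$, which is delicate because $\mu$ may have atoms (across an atom, $\hat S_x$ jumps).

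One caveat, which you share with the paper (it says the boundary case ``can be treated similarly''): your parenthetical treatment of $\varrho_0\in\{0,L\}$ via ``the Neumann condition $\hat S_x=0$'' is not justified. If $\mu$ has an atom at the boundary, testing \eqref{3.20a} with $\zeta$ supported near $0$ and $\zeta(0)=1$ gives $d_S\hat S_x(0^+)=\eta(0)\mu(\{0\})$, which can be strictly positive, and then your $\cosh$ contradiction evaporates ($\hat S$ may increase away from $h_{min}$). In fact Theorem \ref{th2.3}(ii)-(c2) exhibits exactly this situation: $\Theta_h=\{0\}$, $h_x$ non-decreasing, $\mu$ a point mass at $0$, and $\hat S$ solving $-d_S\hat S_{xx}=\Lambda-\hat S$ on all of $(0,L)$; note that under the lemma's hypothesis $h_x$ non-decreasing, $h$ itself can never solve this ODE on an interval, so there is no interval of coincidence at such a boundary minimizer. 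So your argument (like the paper's) genuinely establishes the lemma only for interior $\varrho_0$; the boundary version requires the additional conditions of Theorem \ref{th2.3}(ii)-(b1)/(c1).
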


\begin{proof} By Theorem \ref{th2.2}, we know that $\varrho_0\in\{x\in[0,L]:\ \hat S(x)=h(x)\}$. In the sequel, we only consider the case of $\varrho_0\in(0,L)$, and the case of $\varrho_0=0$ or $L$ can be treated similarly. There are three possibilities we have to distinguish:

(1)\ $\varrho_0$ is an isolated point in the set $\{x\in[0,L]:\ \hat S(x)=h(x)\}$;

(2)\ $\varrho_0$ is an accumulation point in  $\{x\in[0,L]:\ \hat S(x)=h(x)\}$;

(3)\ there is a small $\epsilon_0>0$ such that $(\varrho_0-\epsilon_0,\varrho_0+\epsilon_0)\subset\{x\in[0,L]:\ \hat S(x)=h(x)\}$.

In what follows, we will exclude (1) and (2). If (1) happens, then
$$\hat S(\varrho_0)=h(\varrho_0)=h_{min}\ \ \mbox{and}\ \ \hat S<h\ \ \mbox{ in $(\varrho_0-\epsilon_1,\varrho_0+\epsilon_1)\setminus\{\varrho_0\}$}$$
for some small $\epsilon_1>0$.

Note that $\mu([0,L])<\infty$. In view of this fact, one can apply the interior regularity theory for elliptic equations to \eqref{th2.2-1} and assert that $\hat S\in C^1(0,L)$. Clearly, $h_x(\varrho_0)=0$. Since $\hat S(\varrho_0)=h(\varrho_0)=h_{min}$ and $\hat S\geq h_{min}$ due to \eqref{th2.2-2}, we infer that $\hat S_x(\varrho_0)=0$.

On the other hand, by \eqref{th2.2-1}, $\hat S$ satisfies
\bes\label{3.26}
-d_S\hat S_{xx}=\Lambda-\hat S\ \ \ \mbox{ in}\ (\varrho_0-\epsilon_1,\varrho_0+\epsilon_1)\setminus\{\varrho_0\}.
\ees
By using $\hat S_x(\varrho_0)=0$ and \eqref{3.26}, one can easily see that $\hat S$ is increasing in $(\varrho_0-\epsilon_1,\varrho_0)$ while $\hat S$ is decreasing in $(\varrho_0,\varrho_0+\epsilon_1)$. This implies that $\hat S<h_{min}$ in $(\varrho_0-\epsilon_1,\varrho_0+\epsilon_1)\setminus\{\varrho_0\}$, contradicting against \eqref{th2.2-2}. Thus, (1) is impossible.

If (2) happens, without loss of generality, we can find two points, say $z_1,\,z_2$ with $\varrho_0<z_1<z_2<\varrho_0+\epsilon_2$ for some small $\epsilon_2>0$ such that
\bes\label{3.27}
\mbox{$\hat S(z_1)=h(z_1)$,\ \ $\hat S(z_2)=h(z_2)$\ \ and\ \ $\hat S<h$\ \ in\ $(z_1,z_2)$.}
\ees
By taking $\epsilon_2$ to be smaller if necessary, we may assume that $h_x(z_1)\leq h_x(z_2)$ due to the monotonicity of $h_{x}$. Then, $\hat S$ solves \eqref{3.26} in $(z_1,z_2)$. By means of \eqref{3.27}, we have
$$
\hat S_x(z_1)\leq h_x(z_1),\ \ \ \hat S_x(z_2)\geq h_x(z_2),
$$
leading to $\hat S_x(z_1)\leq\hat S_x(z_2)$. However, it follows from \eqref{3.26} that $\hat S_{xx}<0$ in $(z_1,z_2)$, which gives
$\hat S_x(z_1)>\hat S_x(z_2)$, a contradiction. Hence, the possibility (2) has been ruled out.

The above argument shows that (3) must hold. Now, since $\hat S=h$ on $[\varrho_0-\epsilon_0,\varrho_0+\epsilon_0]$, we can multiply  both sides of \eqref{th2.2-3} by any function $\zeta\in C^2([0,L])$ with compact support on $[\varrho_0-\epsilon_0,\varrho_0+\epsilon_0]$ and integrate to conclude that
$$
d_S h_{xx}+\Lambda-h-\eta(x)\mu(\{x\})=0,\ \ \mbox{ a.e. for}\ \  x\in(\varrho_0-\epsilon_0,\varrho_0+\epsilon_0),
$$
which yields the expression of $\mu(\{x\})$.
\end{proof}

\begin{lem}\label{l2.4} Assume that $h\in C^2([0,L])$, $h_x$ is non-decreasing on $[0,L]$, and $\Theta_h=\{\tau_0\}$ for some $\tau_0\in(0,L)$. Then there exist two numbers $\tau_1,\,\tau_2$ with $0<\tau_1<\tau_0<\tau_2<L$ such that
 \bes\label{3.28}
 \hat S(x)=h(x),\ \ \ \ \forall x\in[\tau_1,\tau_2],
 \ees
and on $[0,\tau_1)\cup(\tau_2,L]$, $\hat S$ satisfies
\begin{equation}\label{3.29}
\begin{cases}
 -d_S\hat S_{xx}(x)=\Lambda-\hat S,\ \ \ \  \ x\in(0,\tau_1)\cup(\tau_2,L),\vspace{1mm} \\
\hat S_x(0)=\hat S_x(L)=0,\ \ \vspace{1mm} \\
\hat S(\tau_1)=h(\tau_1),\ \ \hat S(\tau_2)=h(\tau_2),
\end{cases}
\end{equation}
and $\mu$ satisfies
 \bes\label{3.28-a}
 \mu(\{x\})=\frac{\Lambda-h+d_Sh_{xx}}{\eta(x)},\ \ \mbox{ a.e. for}\ \, x\in(\tau_1,\tau_2),
 \ees
\bes\label{3.28-b}
 \mu(\{x\})=0,\ \ \ \ \forall x\in[0,\tau_1)\cup(\tau_2,L].
 \ees

\end{lem}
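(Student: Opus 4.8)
The plan is to analyze the \emph{contact set} $\mathcal{C}=\{x\in[0,L]:\ \hat S(x)=h(x)\}$ associated with the limit pair $(\hat S,\mu)$ from Theorem \ref{th2.2}, and to prove that it is exactly a closed interval $[\tau_1,\tau_2]$ with $0<\tau_1<\tau_0<\tau_2<L$; once this is known, \eqref{3.28} is just the definition of $\mathcal{C}$, \eqref{3.28-b} follows from \eqref{th2.2-3}, and \eqref{3.29}, \eqref{3.28-a} are obtained on the two sides. First I would localize near $\tau_0$: since $h_x$ is non-decreasing on all of $[0,L]$, it is in particular non-decreasing near the unique minimizer $\tau_0\in(0,L)$, so Lemma \ref{l2.3} applies at $\varrho_0=\tau_0$ and furnishes a small $\epsilon_0>0$ with $\hat S=h$ on $(\tau_0-\epsilon_0,\tau_0+\epsilon_0)$ together with the pointwise formula for $\mu$ there. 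Hence $\mathcal{C}$ contains an open neighborhood of $\tau_0$.

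Next I would show that $\mathcal{C}$ has no interior gaps. On the open set $U=(0,L)\setminus\mathcal{C}$ one has $\hat S<h$, so by \eqref{th2.2-3} the measure $\mu$ carries no mass there and $\hat S$ solves $-d_S\hat S_{xx}=\Lambda-\hat S$ classically; because $\hat S\le h<\Lambda$ by \eqref{th2.2-2} and \eqref{cond}, this forces $\hat S_{xx}<0$, i.e. $\hat S$ is strictly concave on each component of $U$. As $h_x$ is non-decreasing, $h$ is convex, so $g:=\hat S-h$ is strictly concave on every component of $U$. If some component $(a,b)$ of $U$ satisfied $0<a<b<L$, then $a,b\in\mathcal{C}$ would give $g(a)=g(b)=0$, while strict concavity forces $g>0$ on $(a,b)$, contradicting $g<0$ there. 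Therefore every component of $U$ must abut an endpoint of $[0,L]$, and combined with the neighborhood of $\tau_0$ already found, this yields $\mathcal{C}=[\tau_1,\tau_2]$ with $0\le\tau_1<\tau_0<\tau_2\le L$.

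The crux is to upgrade this to the strict inclusions $0<\tau_1$ and $\tau_2<L$, and this boundary analysis is where I expect the main difficulty, since it is the only place the sign information in the monotonicity of $h_x$ is used. I would argue by contradiction: if $\tau_1=0$, then $\hat S\equiv h$ on $[0,\tau_2]$; testing the weak identity \eqref{3.20a} against a sequence $\zeta_n\in W^{1,2}(0,L)$ supported in $[0,\tau_2)$ with $\zeta_n(0)=1$ and support shrinking to $\{0\}$, and integrating by parts the term $d_S\int_0^{\tau_2}h_x(\zeta_n)_x\,{\rm d}x$ (legitimate since $h\in C^2$), produces the boundary contribution $-d_S h_x(0)\zeta_n(0)$. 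Letting $n\to\infty$, the absolutely continuous integrals vanish while $\int_0^L\eta\zeta_n\,\mu({\rm d}x)\to\eta(0)\mu(\{0\})$, so that $\eta(0)\mu(\{0\})=d_S h_x(0)$. Since $\tau_0$ is the \emph{unique} minimizer and $h_x$ is non-decreasing, one checks $h_x(0)<0$ (equality would force $h$ to be constant on $[0,\tau_0]$, contradicting $\Theta_h=\{\tau_0\}$); hence $\mu(\{0\})<0$, contradicting $\mu\ge0$. The symmetric computation at $x=L$, using $h_x(L)>0$, rules out $\tau_2=L$.

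Finally, with $[\tau_1,\tau_2]$ compactly contained in $(0,L)$, it remains to record \eqref{3.29} and \eqref{3.28-a}. On $(0,\tau_1)$ and $(\tau_2,L)$ we have $\hat S<h$, so $\mu=0$ and $\hat S$ solves the linear equation in \eqref{3.29}; because $\mu$ now carries no mass near $x=0$ or $x=L$, I can invoke the $C^1$ boundary-convergence argument used to prove $\hat S_x(0)=0$ in Claim 1 of Theorem \ref{th2.2} (equicontinuity of $S_x$ from the integral identity \eqref{3.21-aa} together with Ascoli-Arzel\`a) to obtain $\hat S_x(0)=\hat S_x(L)=0$, while $\hat S(\tau_1)=h(\tau_1)$ and $\hat S(\tau_2)=h(\tau_2)$ hold by continuity. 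The formula \eqref{3.28-a} on all of $(\tau_1,\tau_2)$ then follows exactly as in Lemma \ref{l2.3}: since $\hat S=h\in C^2$ there, inserting $\hat S=h$ into \eqref{th2.2-1} and testing against compactly supported $\zeta\in C^2$ gives $d_S h_{xx}+\Lambda-h-\eta(x)\mu(\{x\})=0$ a.e., which is \eqref{3.28-a}, and \eqref{3.28-b} is immediate from $\mu(U)=0$.
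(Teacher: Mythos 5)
Your proposal is correct, and it diverges from the paper's proof precisely at the crucial step, namely ruling out $\tau_1=0$ and $\tau_2=L$. The paper's argument there goes back to the approximating endemic equilibria at positive $d_I$: assuming $\tau_1=0$, it integrates the $S$-equation over $[0,\tau]$, passes to the limit (using the density formula for $\mu$ on the contact region) to get $S_x(\tau^-)\to h_x(\tau)-h_x(0)>0$ for $\tau$ near $\tau_0$, concludes that $S$ is eventually increasing on $[\tau_0-\epsilon_0,\tau_0]$, and hence that its uniform limit $h$ is non-decreasing there, contradicting that $\tau_0$ is the unique minimizer. You instead stay entirely at the level of the limit problem: testing the weak identity \eqref{3.20a} with hat functions concentrating at $x=0$ extracts the atom identity $\eta(0)\mu(\{0\})=d_S h_x(0)<0$, contradicting $\mu\geq 0$. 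Both are valid (I checked your sign bookkeeping: the left side tends to $-d_Sh_x(0)$ and the right side to $-\eta(0)\mu(\{0\})$), and yours is shorter and cleaner in that it avoids the convergence bookkeeping for $S_x$ and $\int_0^\tau \eta I$, needing only \eqref{3.20a}, finiteness and nonnegativity of $\mu$, and $h\in C^1$ near the boundary; it also exposes a sharper structural fact, that a contact interval reaching a Neumann endpoint would force a negative point mass of $\mu$ there. What the paper's longer route buys is quantitative information about the approximating solutions themselves (the limit of $S_x(\tau^-)$), in the same style as its Claim 1 machinery, which it reuses anyway for the boundary conditions in \eqref{3.29} — as do you. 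Your "no interior gap" step via strict concavity of $\hat S-h$ is essentially the paper's derivative-comparison in Lemma \ref{l2.3}, case (2), just phrased more cleanly.

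Two small points to tighten, neither a genuine gap. First, your concavity argument is stated only for components $(a,b)$ of $U$ with $0<a<b<L$; to conclude $\mathcal{C}=[\tau_1,\tau_2]$ exactly — in particular that $0\notin\mathcal{C}$ when $\tau_1>0$, which you need both for $\mu(\{0\})=0$ in \eqref{3.28-b} and for the equicontinuity argument giving $\hat S_x(0)=0$ — you must also apply it to a component of the form $(0,\tau_1)$ with $0\in\mathcal{C}$; the same computation works verbatim since concavity plus $g(0)=g(\tau_1)=0$ gives $g\geq 0$ on $(0,\tau_1)$. Second, your parenthetical for $h_x(0)<0$ only excludes $h_x(0)=0$; the case $h_x(0)>0$ should be dispatched too (monotonicity of $h_x$ would then make $h$ strictly increasing, putting the minimum at $x=0$).
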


\begin{proof} Let us denote
$$
\tau_1=\inf\{\tau\in[0,\tau_0):\ \ \hat S(x)=h(x),\ \forall x\in[\tau,\tau_0]\},$$
$$
\tau_2=\sup\{\tau\in(\tau_0,L]:\ \ \hat S(x)=h(x),\ \forall x\in[\tau_0,\tau]\}.
$$
Lemma \ref{l2.3} implies that $\tau_1$ and $\tau_2$ are well defined, and $0\leq\tau_1<\tau_0$ and $\tau_0<\tau_2\leq L$. In addition, \eqref{3.28} and \eqref{3.28-a} hold.

In light of the monotonicity of $h_{x}$ on $[0,L]$, it is easily seen from the proof of Lemma \ref{l2.3} that if $\tau_1>0$, then
$\hat S$ can not touch $h$ in $(0,\tau_1)$ and in turn $\mu([0,\tau_1))=0$; similarly, if $\tau_2<L$, $\hat S$ can not touch $h$ in  $(\tau_2,L)$ and so $\mu((\tau_2,L])=0$.

If $\tau_1>0$ and $\tau_2<L$, we can use the analysis as in the proof of Claim 1 of Theorem \ref{th2.2} to conclude that $\hat S_x(0)=\hat S_x(L)=0$. As $\mu([0,\tau_1)\cup(\tau_2,L])=0$, by \eqref{th2.2-1} and the continuity of $\hat S$, a standard compactness argument of elliptic equations yields that $\hat S$ solves \eqref{3.29} in the classical sense. Clearly, the solution of \eqref{3.29} is unique.

It remains to prove $\tau_1>0$ and $\tau_2<L$. Note that the monotonicity of $h_{x}$, $\Theta_h=\{\tau_0\}$ and $h_x(\tau_0)=0$ ensure
$h_{x}(0)<0$ and $h_x(L)>0$. Suppose that $\tau_1=0$, and so \eqref{3.28} holds on $[0,\tau_2]$.
Now, given $\tau\in(0,\tau_0]$, integrating the $S$-equation over $[0,\tau]$ and using \eqref{3.28}, we infer that
\begin{align}
-d_SS_x(\tau^-)&=\int_0^{\tau}[\Lambda-S(y)-\beta(y)S(y)I(y)+\gamma(y)I(y)]{\rm d}y\nonumber \\
&=\int_0^{\tau}[\Lambda-S(y)-\eta(y)I(y)]{\rm d}y+\int_0^{\tau}[\gamma(y)+\eta(y)-\beta(y)S(y)]I(y){\rm d}y\nonumber \\
&\rightarrow\int_{[0,\tau]}[\Lambda-h(y)-\eta(y)\mu]({\rm d}y)
=\int_0^{\tau}[-d_Sh_{xx}(y)]{\rm d}y\nonumber\\
&=-d_Sh_x(\tau)+d_Sh_x(0),\ \ \ \mbox{as}\ \, d_I\to0.
\nonumber
\end{align}
That is, for any $\tau\in(0,\tau_0]$, it holds that
\bes\nonumber
S_x(\tau^-)\to h_x(\tau)-h_x(0),\ \ \ \mbox{as}\ \, d_I\to0.
\ees
Since $h_{x}$ is non-decreasing on $[0,\tau_0]$ and $h_{x}(0)<0$, there exists a small $\epsilon_0>0$ such that for all $x\in[\tau_0-\epsilon_0,\tau_0]$,
\bes\nonumber
S_x(x^-)\geq \frac{1}{2}[h_x(\tau_0)-h_x(0)]=-\frac{1}{2}h_x(0)>0
\ees
for all small $d_I>0$. This implies that $S$ is increasing on $[\tau_0-\epsilon_0,\tau_0]$ for all such small $d_I>0$.
In view of $S\to h$ uniformly on $[\tau_0-\epsilon_0,\tau_0]$ as $d_I\to0$, $h$ must be non-decreasing on $[\tau_0-\epsilon_0,\tau_0]$, which is a contradiction against our assumption.
Hence, $\tau_1>0$. Similarly, we have $\tau_2<L$ by using $h_x(L)>0$. As a consequence, we deduce \eqref{3.28-b}. The proof is complete.
\end{proof}

Similar to the argument of Lemma \ref{l2.3}, we can conclude the following result.

\begin{lem}\label{l2.3a} Assume that $h\in C^2([0,L])$, $[\varrho_1,\varrho_2]\subset\Theta_h$ and $h_{x}$ is non-decreasing in some neighborhood of $\varrho_1,\,\varrho_2$. Let $\hat S$ and $\mu$ be given as in Theorem \ref{th2.2}. Then there exists a small $\epsilon_0>0$ such that
$$
\hat S(x)=h(x),\ \ \ \forall x\in(\varrho_1-\epsilon_0,\varrho_2+\epsilon_0)\cap(0,L)
$$
and
$$
\mu(\{x\})=\frac{\Lambda-h+d_Sh_{xx}}{\eta(x)},\ \ \ \mbox{ a.e. for}\ \, x\in(\varrho_1-\epsilon_0,\varrho_2+\epsilon_0)\cap(0,L).
$$

\end{lem}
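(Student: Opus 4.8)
The plan is to mimic the three-case dichotomy used in the proof of Lemma \ref{l2.3}, but applied simultaneously to the two endpoints $\varrho_1$ and $\varrho_2$ of the highest-risk interval. Since $[\varrho_1,\varrho_2]\subset\Theta_h$, we already know from Theorem \ref{th2.2} (specifically \eqref{th2.2-2a}) that $\hat S(x)=h(x)$ for all $x\in[\varrho_1,\varrho_2]$, so the only genuinely new content is to push this equality slightly past each endpoint into a one-sided neighborhood, namely onto $(\varrho_1-\epsilon_0,\varrho_1]$ and $[\varrho_2,\varrho_2+\epsilon_0)$. By the symmetry of the two endpoints I would carry out the argument in full only near $\varrho_2$ (the right endpoint) and note that the left endpoint is handled identically; the interior $(\varrho_1,\varrho_2)$ is already covered.

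For the endpoint $\varrho_2$, I would distinguish the same three possibilities as in Lemma \ref{l2.3}: $(1)$ there is a sequence of touching points of $\hat S$ and $h$ accumulating at $\varrho_2$ from the right, or else $\varrho_2$ is approached only by a region where $\hat S<h$; more precisely, $(1)$ $\varrho_2$ is an isolated touching point from the right (so $\hat S<h$ on $(\varrho_2,\varrho_2+\epsilon_1)$), $(2)$ $\varrho_2$ is an accumulation point of touching points from the right, and $(3)$ $\hat S=h$ on an entire right-neighborhood $[\varrho_2,\varrho_2+\epsilon_0)$. To exclude $(1)$, I would use that $\hat S\in C^1(0,L)$ by interior elliptic regularity applied to \eqref{th2.2-1} (recall $\mu([0,L])<\infty$), that $h_x(\varrho_2)=0$ because $\varrho_2\in\Theta_h$ is an interior minimizer of $h$, and that $\hat S_x(\varrho_2)=0$ since $\hat S\geq h_{min}=\hat S(\varrho_2)$ by \eqref{th2.2-2}; then solving $-d_S\hat S_{xx}=\Lambda-\hat S$ on $(\varrho_2,\varrho_2+\epsilon_1)$ with $\hat S_x(\varrho_2)=0$ forces $\hat S$ to be strictly decreasing there, contradicting $\hat S\geq h_{min}=h(\varrho_2)$. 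To exclude $(2)$, I would pick two touching points $z_1<z_2$ just to the right of $\varrho_2$ with $\hat S<h$ on $(z_1,z_2)$; the non-decreasing monotonicity of $h_x$ near $\varrho_2$ gives $h_x(z_1)\le h_x(z_2)$ and hence $\hat S_x(z_1)\le\hat S_x(z_2)$, whereas $-d_S\hat S_{xx}=\Lambda-\hat S<0$ (using \eqref{th2.2-2}, i.e. $\hat S<h<\Lambda$) on $(z_1,z_2)$ forces $\hat S_x(z_1)>\hat S_x(z_2)$, a contradiction. Thus only $(3)$ survives at each endpoint.

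Having established $\hat S=h$ on $(\varrho_1-\epsilon_0,\varrho_2+\epsilon_0)\cap(0,L)$, the formula for $\mu(\{x\})$ follows exactly as at the end of Lemma \ref{l2.3}: I would substitute $\hat S=h$ into the weak equation \eqref{th2.2-1}, test against an arbitrary $\zeta\in C^2([0,L])$ with compact support in this neighborhood, and read off
$$
d_S h_{xx}+\Lambda-h-\eta(x)\mu(\{x\})=0\ \ \ \text{a.e.},
$$
which gives the stated expression. The main obstacle I anticipate is purely organizational rather than conceptual: one must be careful that $h_x$ is only assumed non-decreasing in a neighborhood of each of $\varrho_1$ and $\varrho_2$ (not on all of $[0,L]$), so the monotonicity-of-$h_x$ inputs in steps $(1)$ and $(2)$ are legitimate only after shrinking $\epsilon_1,\epsilon_2$ to lie inside those neighborhoods, and one must treat the left-neighborhood of $\varrho_1$ and the right-neighborhood of $\varrho_2$ with the correct orientation while invoking that the interior $(\varrho_1,\varrho_2)$ is already known to touch $h$. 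None of these steps introduces a new difficulty beyond what Lemma \ref{l2.3} already overcomes.
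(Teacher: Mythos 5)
Your proposal is correct and follows essentially the same route as the paper, which proves this lemma merely by remarking that it is ``similar to the argument of Lemma \ref{l2.3}''; your one-sided application of that three-case dichotomy at each endpoint of $[\varrho_1,\varrho_2]$, combined with \eqref{th2.2-2a} for the interior and the same compactly supported test-function argument for the density of $\mu$, is precisely the intended adaptation. One sign slip: in excluding case $(2)$ you write $\Lambda-\hat S<0$, whereas $\hat S<h<\Lambda$ gives $\Lambda-\hat S>0$ and hence $\hat S_{xx}<0$, which is the concavity that your stated conclusion $\hat S_x(z_1)>\hat S_x(z_2)$ actually requires.
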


Based upon Lemma \ref{l2.3a}, we can deduce the following result.

\begin{lem}\label{l2.4a} Assume that $h\in C^2([0,L])$, $\Theta_h=[\varrho_1,\varrho_2]$ and $h_{x}$ is non-decreasing on $[0,\varrho_1]\cup[\varrho_2,L]$. Let $\hat S$ and $\mu$ be given as in Theorem \ref{th2.2}. Then there exist two numbers $\tau_1,\,\tau_2$ with $0<\tau_1<\varrho_1<\varrho_2<\tau_2<L$ such that all the assertions in Lemma \ref{l2.4} hold.
\end{lem}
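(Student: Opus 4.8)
The goal is to prove Lemma \ref{l2.4a}, which asserts that when $\Theta_h=[\varrho_1,\varrho_2]$ is a nondegenerate interval and $h_x$ is non-decreasing on the complementary intervals $[0,\varrho_1]\cup[\varrho_2,L]$, there exist $\tau_1,\,\tau_2$ with $0<\tau_1<\varrho_1<\varrho_2<\tau_2<L$ such that all assertions of Lemma \ref{l2.4} hold, namely \eqref{3.28}--\eqref{3.28-b} with $[\tau_1,\tau_2]$ as the touching interval. The plan is to mirror the proof of Lemma \ref{l2.4}, using Lemma \ref{l2.3a} in place of Lemma \ref{l2.3} as the local statement, and then running the same global monotonicity contradiction argument on the two outer intervals. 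The key simplification is that on the whole of $[\varrho_1,\varrho_2]$ we already know $h\equiv h_{min}$ and, by Lemma \ref{l2.3a}, $\hat S=h$ holds on an open neighborhood of $[\varrho_1,\varrho_2]$; hence the only remaining work is to show $\hat S$ detaches from $h$ before reaching the endpoints $0$ and $L$.

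First I would define
$$
\tau_1=\inf\{\tau\in[0,\varrho_1):\ \hat S(x)=h(x)\ \forall x\in[\tau,\varrho_1]\},\quad
\tau_2=\sup\{\tau\in(\varrho_2,L]:\ \hat S(x)=h(x)\ \forall x\in[\varrho_2,\tau]\},
$$
so that by Lemma \ref{l2.3a} both are well defined with $0\le\tau_1<\varrho_1$ and $\varrho_2<\tau_2\le L$, and \eqref{3.28} holds on $[\tau_1,\tau_2]$ together with the measure formula \eqref{3.28-a} on $(\varrho_1-\epsilon_0,\varrho_2+\epsilon_0)$. Next, using the monotonicity of $h_x$ on $[0,\varrho_1]$ and $[\varrho_2,L]$, exactly the argument excluding possibilities (1) and (2) in Lemma \ref{l2.3} shows that if $\tau_1>0$ then $\hat S$ cannot touch $h$ anywhere in $(0,\tau_1)$, whence $\mu([0,\tau_1))=0$; symmetrically $\mu((\tau_2,L])=0$ when $\tau_2<L$. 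Granting $\tau_1>0$ and $\tau_2<L$, the Claim~1 argument of Theorem \ref{th2.2} gives $\hat S_x(0)=\hat S_x(L)=0$, and since $\mu$ vanishes on the outer intervals, \eqref{th2.2-1} together with the continuity of $\hat S$ and a standard elliptic compactness argument yields that $\hat S$ solves \eqref{3.29} classically; uniqueness of that boundary value problem pins down $\hat S$, and \eqref{3.28-b} follows.

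The crux, as in Lemma \ref{l2.4}, is proving the strict inequalities $\tau_1>0$ and $\tau_2<L$. I would argue by contradiction: suppose $\tau_1=0$, so $\hat S=h$ on $[0,\varrho_1]$. Integrating the $S$-equation over $[0,\tau]$ for $\tau\in(0,\varrho_1]$ and passing to the limit $d_I\to0$ using \eqref{3.28-a} and $I\to\mu$ weakly, one obtains $S_x(\tau^-)\to h_x(\tau)-h_x(0)$. Now the sign input differs slightly from Lemma \ref{l2.4}: here the monotonicity of $h_x$ on $[0,\varrho_1]$ plus $h_x(\varrho_1)=0$ (which holds because $\varrho_1$ is an endpoint of the flat minimizing interval, so $h$ attains $h_{min}$ there with vanishing one-sided derivative) forces $h_x(0)\le 0$, and in fact $h_x(0)<0$ unless $h_x\equiv0$ on $[0,\varrho_1]$. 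Assuming the strict case, $S_x(x^-)\ge-\tfrac12 h_x(0)>0$ on $[\varrho_1-\epsilon_0,\varrho_1]$ for all small $d_I$, so $S$ is increasing there and, since $S\to h$ uniformly, $h$ is non-decreasing on $[\varrho_1-\epsilon_0,\varrho_1]$ — contradicting that $h\equiv h_{min}$ is minimal just to the right at $\varrho_1$ would be the natural clash, but more directly it contradicts that $h>h_{min}$ somewhere in $(0,\varrho_1)$ unless $h$ is constant. The main obstacle I anticipate is handling cleanly the borderline possibility $h_x(0)=0$ (equivalently $h_x\equiv 0$, i.e. $h$ constant, on part of the outer interval): in that degenerate situation the strict positivity of $S_x$ is lost and one must instead exploit that $\Theta_h=[\varrho_1,\varrho_2]$ is exactly the minimizing set to rule out $h$ being minimal on a strictly larger interval, thereby recovering the contradiction. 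The symmetric argument with $h_x(L)>0$ gives $\tau_2<L$, completing the proof.
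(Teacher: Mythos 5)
Your proposal is correct and takes essentially the same approach as the paper: the paper proves Lemma \ref{l2.4a} only implicitly, by invoking Lemma \ref{l2.3a} and rerunning the analysis of Lemma \ref{l2.4} (definition of $\tau_1,\tau_2$, exclusion of touching on the outer intervals via monotonicity of $h_x$, the Claim 1 argument for the Neumann conditions, and the integration-in-$\tau$ contradiction for $\tau_1>0$, $\tau_2<L$), which is exactly what you spell out. The ``degenerate case'' $h_x(0)=0$ you flag as an obstacle never actually arises: since $h_x$ is non-decreasing on $[0,\varrho_1]$ with $h_x(\varrho_1)=0$, having $h_x(0)=0$ would force $h\equiv h_{min}$ on $[0,\varrho_1]$, contradicting that $\Theta_h$ equals $[\varrho_1,\varrho_2]$ exactly—so $h_x(0)<0$ and $h_x(L)>0$ hold automatically, just as in Lemma \ref{l2.4}.
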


\vskip10pt With the aid of Lemmas \ref{l2.3}-\ref{l2.4a}, we are now in a position to prove Theorem \ref{th2.3}.

\begin{proof}[Proof of Theorem \ref{th2.3}] We first prove (i). We proceed indirectly and suppose that $\hat S\not\equiv h$ on $[0,L]$. Since $\hat S$ touches $h$ at least at the highest-risk point due to Theorem \ref{th2.2}, we can find an interval, denoted by $[\ell_1,\ell_2]\subset[0,L]$, such that $\hat S<h$ in $(\ell_1,\ell_2)$ and at the boundary point $x=\ell_i$ for $i=1,\,2$, either
$\hat S$ touches $h$ (and so $\hat S(\ell_i)=h(\ell_i)$) or $\hat S(\ell_i)<h(\ell_i)$. In the latter case, it is necessary that $\ell_i=0$ or $L$, and the analysis to deduce Claim 1 in the proof of Theorem \ref{th2.2} shows that $\hat S_x(\ell_i)=0$. In any case, clearly $\hat S$ satisfies
\begin{equation}\label{th2.3-aa}
\begin{cases}
-d_S\hat S_{xx}=\Lambda-\hat S,\ \ \ \ x\in(\ell_1,\ell_2),\vspace{1mm} \\
\hat S(\ell_i)=h(\ell_i)\ \ \mbox{or}\ \ \hat S_x(\ell_i)=0,\ \ i=1,2.
\end{cases}
\end{equation}
Thus, by our assumption, $h$ is a sub-solution to problem \eqref{th2.3-aa}, and $\max\{\Lambda,\ \max_{x\in[0,L]}h(x)\}$ is a super-solution to \eqref{th2.3-aa}. The well-known technique of sub-supersolution iteration, combined with the uniqueness of solutions to problem \eqref{th2.3-aa}, allows us to conclude that $\hat S\geq h$ on $[\ell_1,\ell_2]$, which leads to a contradiction. Hence, \eqref{th2.3-a} holds, and \eqref{th2.3-b} follows from \eqref{th2.2-1} by using a test-function argument similarly as before.
Therefore, (i) is proved.

We next prove (ii). First of all, let us consider the case of $\tau_0\in(0,L)$. In this case,
the assertions \eqref{th2.3-c}-\eqref{th2.3-f} follow from Lemma \ref{l2.4}, and it remains to show that $\tau_1,\,\tau_2$ are uniquely determined by \eqref{th2.3-g}. As $\hat S<h$ in $[0,\tau_1)$, we have
$$
\hat S(x)=c_1[e^{d_S^{-1/2}x}+e^{-d_S^{-1/2}x}]+\Lambda,\ \, \ \forall x\in[0,\tau_1]
$$
for some $c_1<0$. It then follows from $\hat S(\tau_1)=h(\tau_1)$ that
$$
\hat S(x)=-\frac{\Lambda-h(\tau_1)}{e^{{d_S^{-1/2}\tau_1}}+e^{-{d_S^{-1/2}\tau_1}}}(e^{d_S^{-1/2}x}+e^{-d_S^{-1/2}x})+\Lambda,\ \ \ \forall x\in[0,\tau_1].
$$
Note that $\hat S$ is convex while $h$ is concave in the interval $[0,\tau_1)$, and moreover, $\hat S\in C^1([0,L])$ as shown before. Hence, $\hat S$ must be tangent to $h$ at $x=\tau_1$, which in turn implies that $\tau_1$ is the unique solution to $\hat S_x(\tau_1)=h_x(\tau_1)$. Thus, $\tau_1$ is uniquely determined by the following equation:
$$
\frac{e^{{d_S^{-1/2}\tau_1}}-e^{-{d_S^{-1/2}\tau_1}}}{e^{{d_S^{-1/2}\tau_1}}+e^{-{d_S^{-1/2}\tau_1}}}=-\frac{d_S^{1/2}h_x(\tau_1)}{\Lambda-h(\tau_1)}.
$$
Similarly, $\tau_2$ is uniquely determined by the second equation of \eqref{th2.3-g}. The assertions in (ii)-(a) have been verified.

We now consider the case of $\tau_0=L$. In view of our assumption, clearly $h_x(0)<0$, $h_x(L)\leq0$, and $\hat S(L)=h(L)$.

Assume that $\frac{e^{2Ld_S^{-1/2}}-1}{e^{2Ld_S^{-1/2}}+1}>-\frac{d_S^{1/2}h_x(L)}{\Lambda-h(L)}$. In order to deduce the desired conclusion in (ii)-(b1), one can follow the analysis of Lemmas \ref{l2.3} and \ref{l2.4}. By checking the analysis there, one just needs to show that $\tau_1$ defined in the assertion (ii)-(a) satisfies $\tau_1>0$. It turns out that this amounts to rule out the situation that $\hat S<h$ in $[0,L)$. Suppose that $\hat S<h$ in $[0,L)$.
Then, arguing as before, we see that $\hat S$ satisfies $-d_S\hat S_{xx}=\Lambda-\hat S$ in $(0,L)$ and $\hat S_x(0)=0$.
Solving this problem, we get $\hat S(x)=c_1[e^{{d_S^{-1/2}x}}+e^{-{d_S^{-1/2}x}}]+\Lambda$ for some $c_1<0$. It then follows from $\hat S(L)=h(L)$ that
$$
c_1=-\frac{\Lambda-h(L)}{e^{{d_S^{-1/2}L}}+e^{-{d_S^{-1/2}L}}}.
$$
Thus, we get
$$
\hat S_x(L)=-d_S^{-1/2}(\Lambda-h(L))\frac{e^{2Ld_S^{-1/2}}-1}{e^{2Ld_S^{-1/2}}+1}.
$$
By means of $\hat S<h$ in $[0,L)$ and $\hat S(L)=h(L)$, it is necessary that $\hat S_x(L)\geq h_x(L)$, which leads to
$$\frac{e^{2Ld_S^{-1/2}}-1}{e^{2Ld_S^{-1/2}}+1}\leq-\frac{d_S^{1/2}h_x(L)}{\Lambda-h(L)},$$
contradicting with our assumption. Therefore, $\tau_1>0$ must hold,  and (ii)-(b1) is proved.

Assume that $\frac{e^{2Ld_S^{-1/2}}-1}{e^{2Ld_S^{-1/2}}+1}\leq-\frac{d_S^{1/2}h_x(L)}{\Lambda-h(L)}$.  We first show that $\tau_1>0$ is impossible. On the contrary, we suppose that $\tau_1>0$, and by the above analysis, $\tau_1$ must solve the first equation of \eqref{th2.3-g}. Let us consider the following auxiliary problem:
$$f(\tau)=\frac{e^{2\tau d_S^{-1/2}}-1}{e^{2\tau d_S^{-1/2}}+1}+\frac{d_S^{1/2}h_x(\tau)}{\Lambda-h(\tau)},\ \ \ \ \tau\in[0,L].$$
Since $h_x(\tau)$ is non-decreasing, $h_x(\tau)\leq0$ on $[0,L]$, $h(\tau)$  is non-increasing and $h(\tau)>\Lambda$ on $[0,L]$, it is easy to check that $\frac{d_S^{1/2}h_x(\tau)}{\Lambda-h(\tau)}$ is non-decreasing  on $[0,L]$. Clearly, $\frac{e^{2\tau d_S^{-1/2}}-1}{e^{2\tau d_S^{-1/2}}+1}$ is increasing on $[0,L]$. Therefore, $f(\tau)$ is increasing on $[0,L]$. Observe that $f(L)=\frac{e^{2Ld_S^{-1/2}}-1}{e^{2Ld_S^{-1/2}}+1}+\frac{d_S^{1/2}h_x(L)}{\Lambda-h(L)}\leq0$ due to our assumption. This implies that the first equation of \eqref{th2.3-g} has no solution with respect to $\tau_1$ in $[0,L)$, arriving at a contradiction. Hence, $\hat S<h$ in $[0,L)$ and $\mu([0,L))=0$, and so $\hat S$ solves \eqref{3.33-b}. It remains to prove
\eqref{th2.3-h}. Indeed, by integrating the sum of \eqref{SIS-2}, we obtain
$$
\Lambda L-\int_0^L S(x){\rm d}x=\int_0^L \eta(x)I(x){\rm d}x,\ \ \forall d_I>0.
$$
Letting $d_I\to0$ yields
$$
\Lambda L-\int_0^L\hat S(x){\rm d}x=\int_{[0,L]} \eta(x)\mu({\rm d}x)=\eta(L)\mu(\{L\}).
$$
Here we used the fact of $\mu([0,L))=0$. This gives \eqref{th2.3-h}, and thus the assertions in (ii)-(b2) hold true.

The case of $\tau_0=0$ can be treated similarly as above. In view of Lemma \ref{l2.4a} and the analysis above, the assertions in (iii) follow immediately. The proof is completed.
\end{proof}

\section{Discussions and numerical simulations}
\setcounter{equation}{0}

In recent years, many reaction-diffusion models have been proposed to investigate the transmission dynamics of infectious diseases in a heterogeneous environment. For example, models associated with \eqref{tSIS} have been studied in \cite{ABLouN2008,DP,DW,Gao,LB,LLT,LPW,LPW1,MWW1,MWW2,Peng,PL,Peng-Yi,PZ,SB,TL,WZ}. When the random diffusion is not present, such kind of models have been explored in \cite{Allen1,Allen3,GD,GR,Li-Shuai,LP,VK,WCT,WN} and the references therein. One may also refer to \cite{CL,GKLZ,HL,LLL,LPX,LXZ,LZZ,SLX,WW1,WW2,ZW1,ZW2} for relevant studies on the effect of random diffusion on the dynamics of infectious diseases.

In this paper, we have investigated the steady state solution (namely, EE) of the SIS epidemic reaction-diffusion models \eqref{SIS} and \eqref{SIS-2}, in which the disease transmission is governed by the well-known mass action infection mechanism, due to Kermack and McKendrick \cite{KM1}. In model \eqref{SIS}, the total population number of the susceptible and infected populations is a constant, while in model \eqref{SIS-2}, the total population number is varying, which results from the inclusion of the recruitment for the susceptible population and the death of the infected population. Our purpose is to determine the spatial profile of EE as the movement rate $d_I$ of the infected individuals tends to zero. Such kind of information may be useful for decision-makers to predict the pattern of disease occurrence and henceforth to develop effective disease control strategies.

The previous works \cite{LPW,WJL} derived partial results regarding the spatial profile of EE for \eqref{SIS} and \eqref{SIS-2} as $d_I\to0$; however, a precise characterization for the distribution of  susceptible and infected populations is lacking. In the present work, we have provided a comprehensive understanding on this issue. Below we shall summarize the main theoretical findings of this paper, which will also be supported or complemented by our numerical simulation results.

\subsection{Profile of EE of model \eqref{SIS} as $d_I\to0$.} As pointed out before, when the risk function $k(x)=\frac{\gamma(x)}{\beta(x)}$ is a constant on the entire habitat $[0,L]$, then $(k,\frac{N}{L}-k)$ is the unique EE of \eqref{SIS} provided that $k<\frac{N}{L}$, while $(\frac{N}{L},0)$ is the unique disease-free equilibrium of \eqref{SIS} provided that $k\geq \frac{N}{L}$. Indeed, in such a trivial case, one can follow the same analysis as in \cite[Theorem 4.1]{DW} to conclude that $(k,\frac{N}{L}-k)$ is a global attractor of \eqref{tSIS} if $k<\frac{N}{L}$ and $(\frac{N}{L},0)$ is a global attractor of \eqref{model} if $k\geq \frac{N}{L}$. Thus, unless otherwise specified, we always assume below that the risk function $k(x)=\frac{\gamma(x)}{\beta(x)}$ is {\it non-constant} on $[0,L]$.

According to Theorem \ref{th2.1}, for model \eqref{SIS}, one finds that the susceptible population $S$ converges to the positive constant $k_{min}$ as $d_I\to0$, which means that the susceptible will always distribute homogeneously on the entire habitat once the movement of the infected individuals is restricted to be sufficiently small. Nevertheless, the profile of the infected population $I$ as $d_I\to0$ crucially depends on the distribution behavior of the highest-risk set $\Theta_k$ of the risk function $k(x)$. More precisely, concerning the profile of $I$ for model \eqref{SIS}, we have the following findings.

\vskip6pt
(i)\ If $\Theta_k$ consists of a single point, then $I$ must concentrate only at such a highest-risk point.

\vskip6pt
(ii)\ If $\Theta_{k}$ contains only multiple isolated points, it follows from Remark \ref{re2.2} that $I$ will also concentrate at least at one of those highest-risk points, and the disease will vanish elsewhere. As shown in Figure \ref{fig1}(a)-(b)-(c) for three typical cases, our simulation results suggest that $I$ should concentrate at all such highest-risk points, though the population number of $I$ at each such highest-risk point may vary, depending on the functions $\beta,\, \gamma$.
\begin{figure}[h]
\centering
\includegraphics[width=5.3cm]{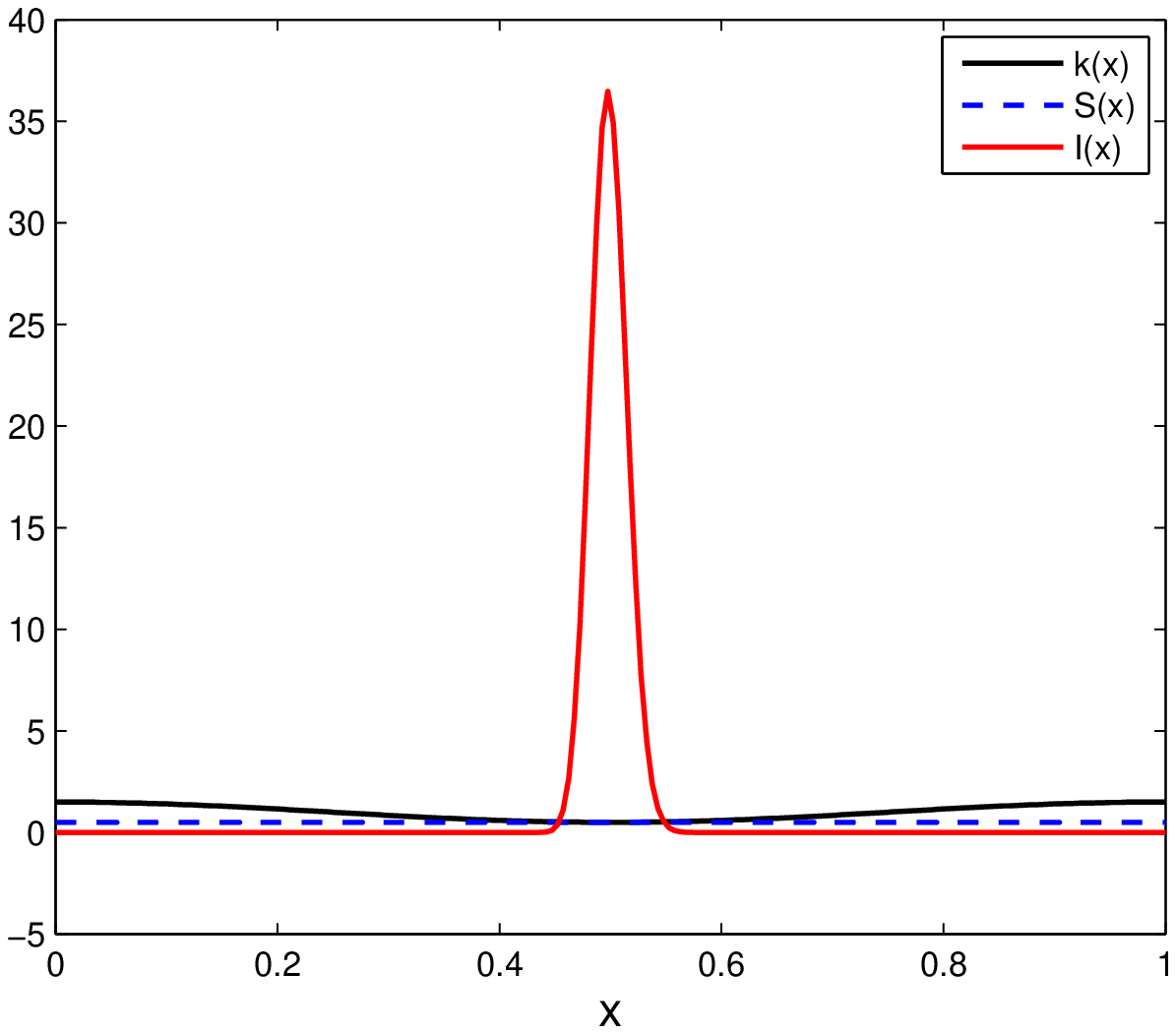}
\includegraphics[width=5.3cm]{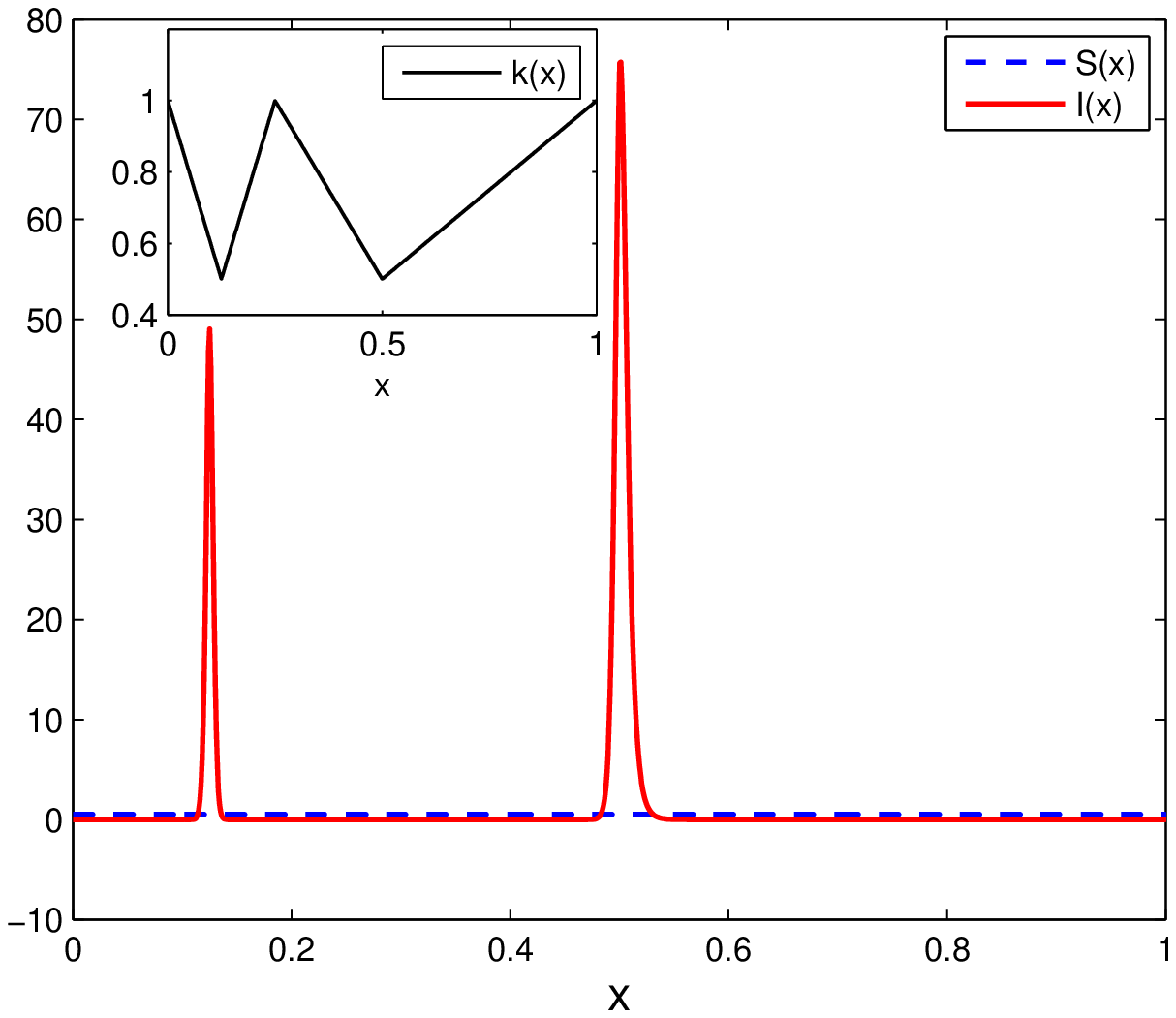}
\includegraphics[width=5.3cm]{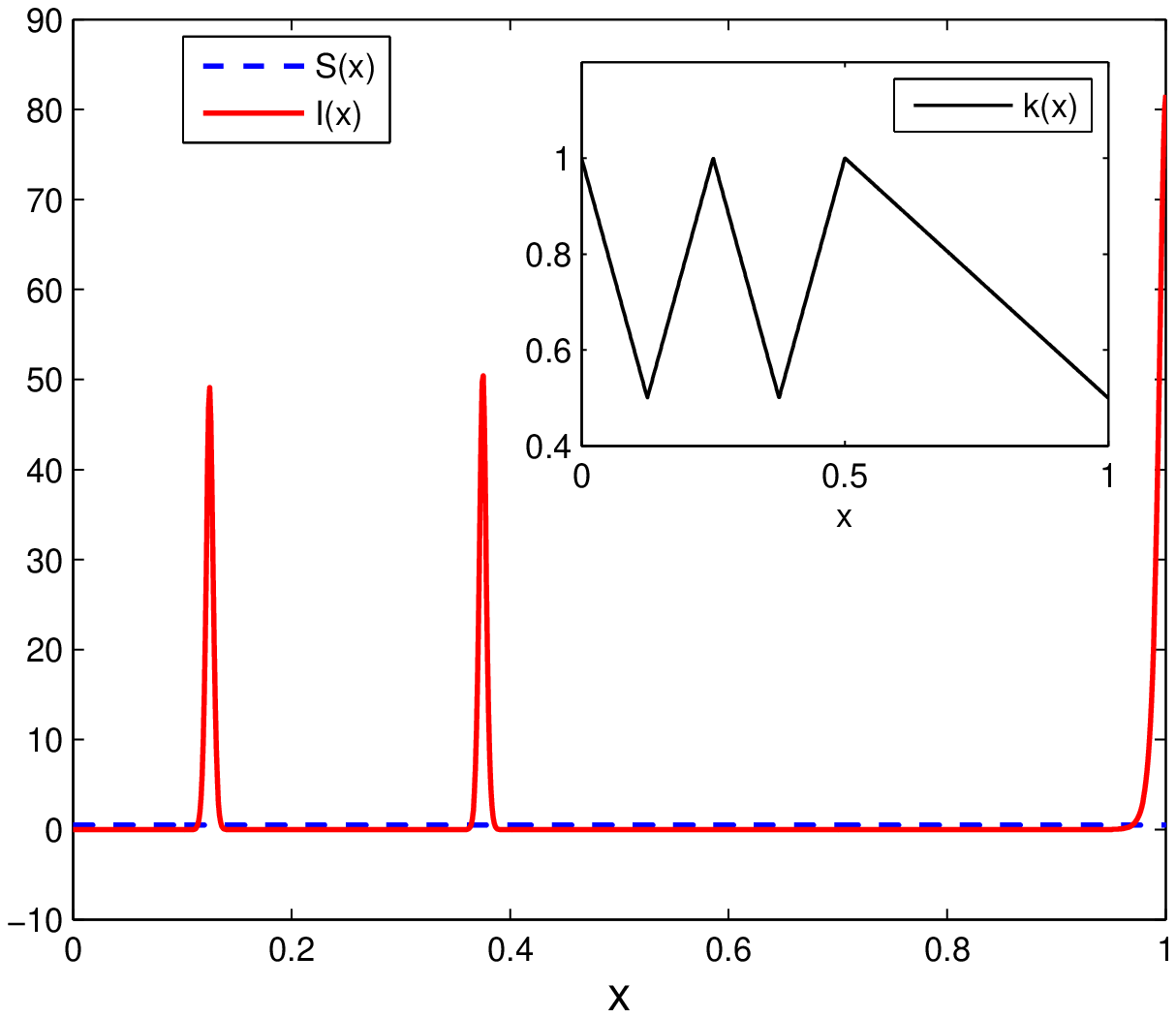}

\small{\hspace{0.6cm} (a)\ $\Theta_k=\{\frac{1}{2}\}$ \hspace{3cm} (b)\ $\Theta_k=\{\frac{1}{8},\,\frac{1}{2}\}$ } \hspace{2cm} (c)\ $\Theta_k=\{\frac{1}{8},\, \frac{3}{8}, 1\}$
\caption{Numerical simulations of the solution profile of  model \eqref{SIS}, where $L=1, N=2, d_S=1, d_I=10^{-7}$, $\beta(x)=1+\frac{1}{2}\sin(2\pi x), \gamma(x)=k(x)\beta(x)$, $k_{\min}=\frac{1}{2}$ and $k(x)$ is chosen as follows. In (a), $k(x)=1+\frac{1}{2}\cos(2\pi x)$. In (b), $k(x)=1-4x,\ 0\leq x<\frac{1}{8}; k(x)=4x,\ \frac{1}{8}\leq x< \frac{1}{4}; k(x)= \frac{3}{2}-2x, \ \frac{1}{4}\leq x<\frac{1}{2}; k(x)=x, \ \frac{1}{2}\leq x\leq1$. In (c), $k(x)=1-4x,\ 0\leq x<\frac{1}{8};\  k(x)= 4x,\ \frac{1}{8}\leq x< \frac{1}{4};\ k(x)=2-4x, \ \frac{1}{4}\leq x <\frac{3}{8}; k(x)=4x-1, \ \frac{3}{8}\leq x <\frac{1}{2};\ k(x)=\frac{3}{2}-x, \ \frac{1}{2}\leq x \leq 1$.}
\label{fig1}
\end{figure}

(iii)\ If $\Theta_{k}$ contains at least one proper interval, then no concentration phenomenon occurs for the disease distribution,
and the infected population will aggregate only on such intervals consisting of highest-risk points, regardless of whether there are isolated highest-risk points or not (see Figure \ref{fig2}(a)-(b)). Indeed, our numerical results indicate that the infected population will aggregate on all such intervals consisting of highest-risk points (see Figure \ref{fig2}(c)); however the population number of $I$ at each such interval may be different, depending on the functions $\beta,\, \gamma$.

\begin{figure}[h]
\centering
\includegraphics[width=5.3cm]{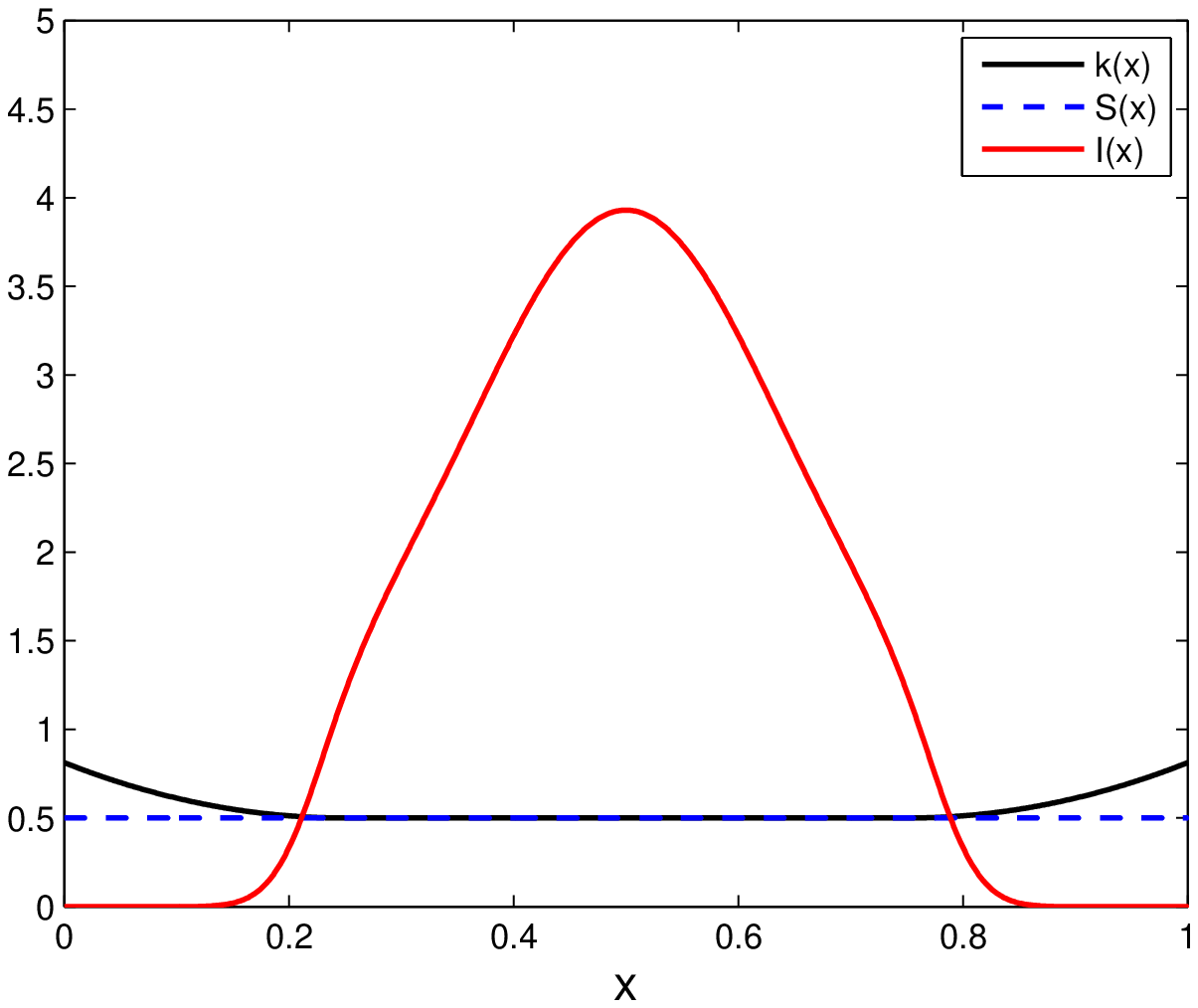}
\includegraphics[width=5.3cm]{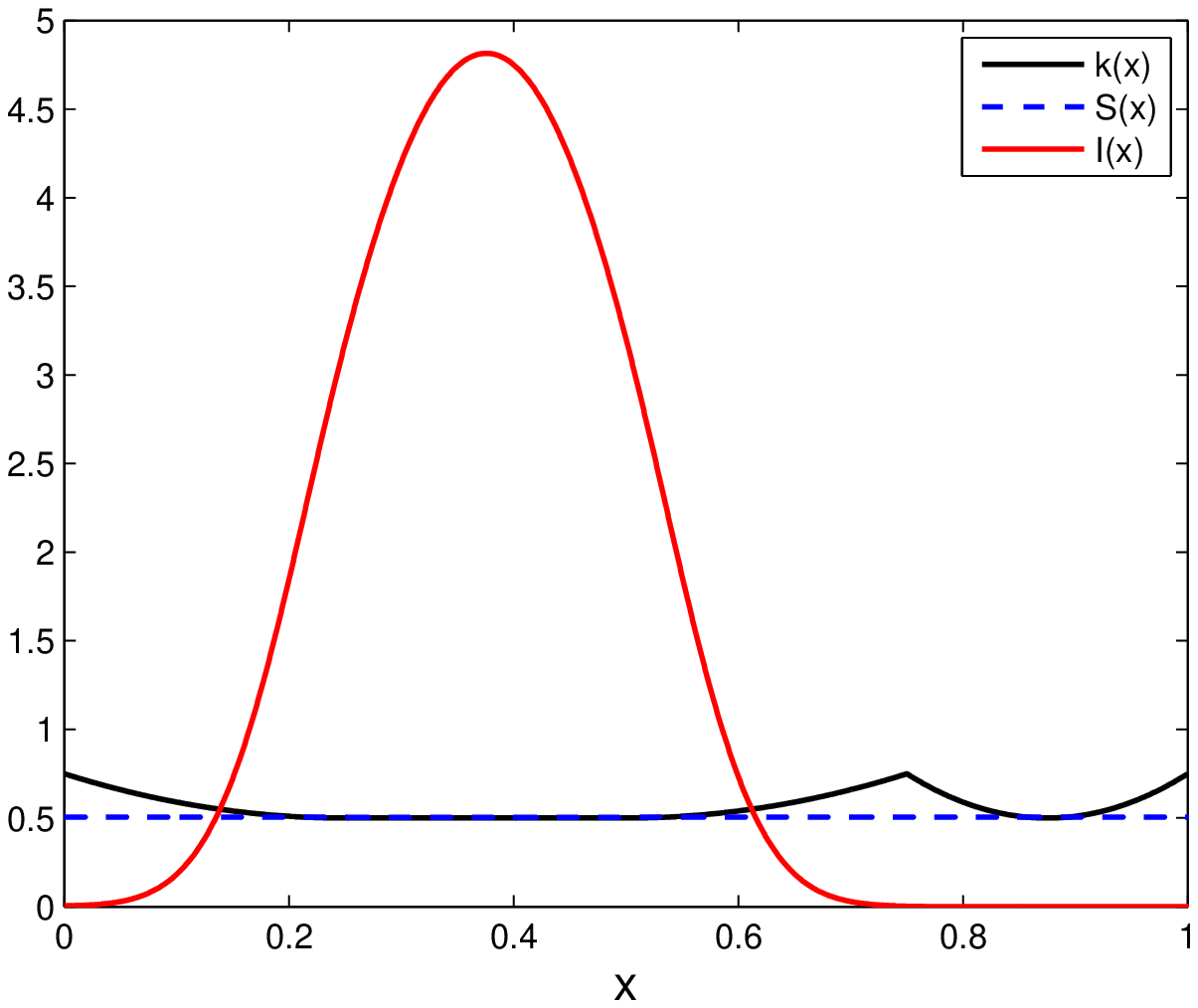}
\includegraphics[width=5.3cm]{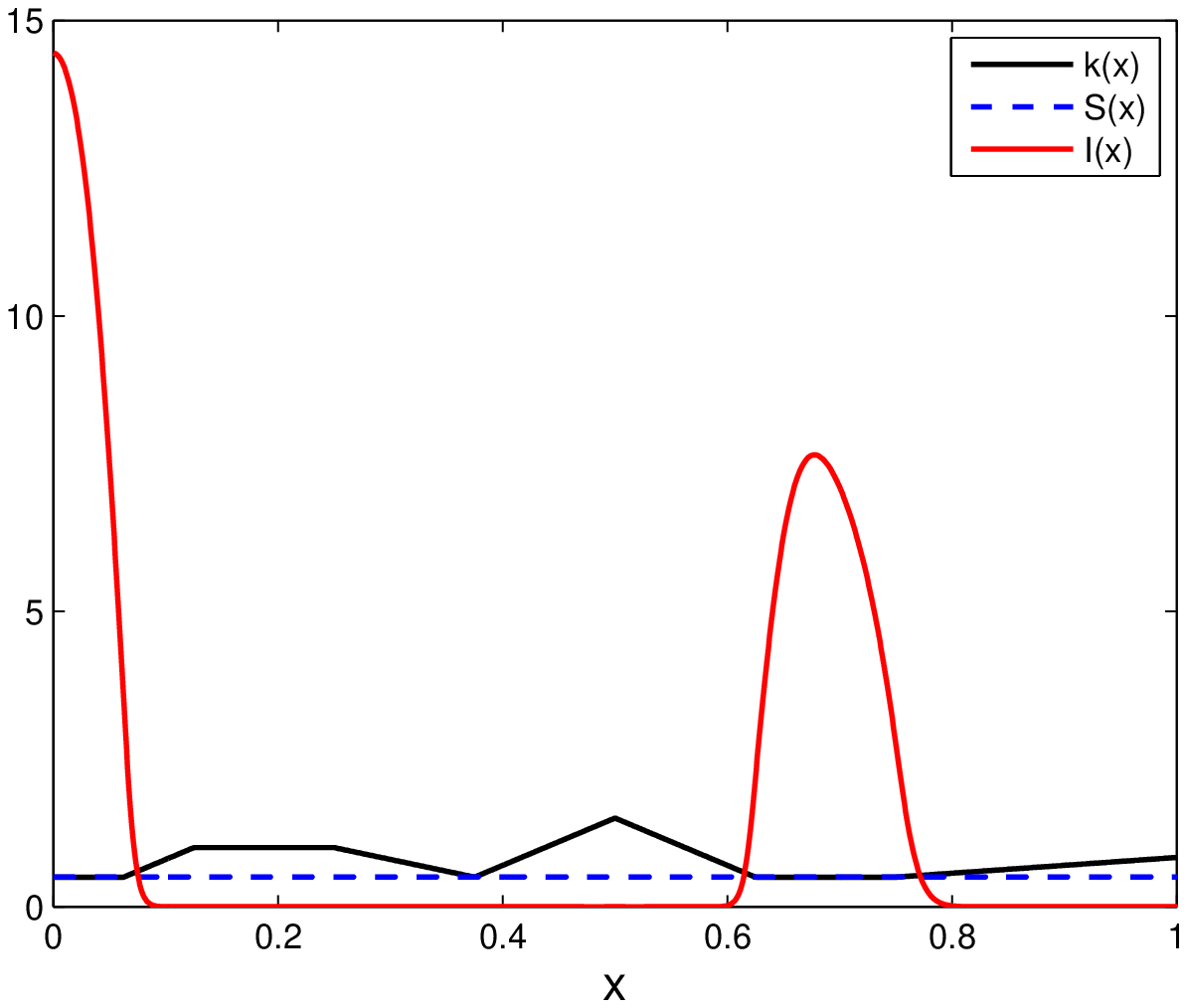}


\scriptsize{\hspace{1cm} (a) $\Theta_k=[\frac{1}{4}, \frac{3}{4}]$ \hspace{2.5cm} (b) $\Theta_k=[\frac{1}{4}, \frac{1}{2}]\cup\{\frac{7}{8}\}$ \hspace{2cm} (c) $\Theta_k=[0, \frac{1}{16}]\cup\{\frac{3}{8}\}\cup[\frac{5}{8}, \frac{3}{4}]$}

\caption{Numerical simulations of the solution profile of  model \eqref{SIS}, where $L=1, N=2, d_S=1, d_I=10^{-5}$, $\beta(x)=1, \gamma(x)=k(x)\beta(x)$, $k_{\min}=\frac{1}{2}$ and $k(x)$ is chosen as follows.  In (a),\ $\Theta_k=[\frac{1}{4}, \frac{3}{4}]$,
 $k(x)=\frac{1}{2}+5(x-\frac{1}{4})^2, 0\leq x<\frac{1}{4};\ k(x)=\frac{1}{2},\ \frac{1}{4}\leq x< \frac{3}{4};\
 k(x)=\frac{1}{2}+5(x-\frac{3}{4})^2, \ \frac{3}{4}\leq x\leq1$.
 \ In (b),\ $\Theta_k=[\frac{1}{4}, \frac{1}{2}]\cup\{\frac{7}{8}\}$, $k(x)=\frac{1}{2}+4(x-\frac{1}{4})^2, \ 0\leq x<\frac{1}{4};\
 k(x)=\frac{1}{2},\ \frac{1}{4}\leq x< \frac{1}{2};\ k(x)=\frac{1}{2}+4(x-\frac{1}{2})^2,\ \frac{1}{2}\leq x <\frac{3}{4};\
 k(x)=\frac{1}{2}+16(x-\frac{7}{8})^2,\ \frac{3}{4}\leq x \leq 1$.
 \ In (c), \ $\Theta_k=[0, \frac{1}{16}]\cup\{\frac{3}{8}\}\cup[\frac{5}{8}, \frac{3}{4}]$, $k(x)=\frac{1}{2},\ 0\leq x<\frac{1}{16};\
 k(x)=8x, \ \frac{1}{16}\leq x< \frac{1}{8};\ k(x)=1, \ \frac{1}{8}\leq x<\frac{1}{4};\ k(x)=2-4x, \ \frac{1}{4}\leq x<\frac{3}{8};\
 k(x)=8x-\frac{5}{2}, \ \frac{3}{8}\leq x< \frac{1}{2};\ k(x)=\frac{11}{2}-8x, \ \frac{1}{2}\leq x<\frac{5}{8};\
 k(x)= \frac{1}{2},\ \frac{5}{8}\leq  x<\frac{3}{4};\ k(x)=\frac{2}{3}x,\ \frac{3}{4}\leq x\leq 1$.}
 \label{fig2}
 \end{figure}

\subsection{Profile of EE of model \eqref{SIS-2} as $d_I\to0$.}
For model \eqref{SIS-2}, for the general H\"{o}lder continuous risk function $h$, under the condition \eqref{cond}, as $d_I\to0$, we know from Theorem \ref{th2.2} that the susceptible population $S$ converges to a positive function $\hat S$, which is non-constant unless $h$ is constant. The infected population $I$ converges to a positive Radon measure $\mu$, whose support is contained in the region in which $\hat S$ touches $h$; in other words, the disease stays only within the place where the susceptible population distributes along the risk function. If the risk function $h$ is of $C^2$, we see from Lemma \ref{l2.3} and Lemma \ref{l2.3a} that the infected population aggregates at least in a neighborhood of the highest-risk locations.
\begin{figure}[h]
\centering
\includegraphics[width=5.3cm]{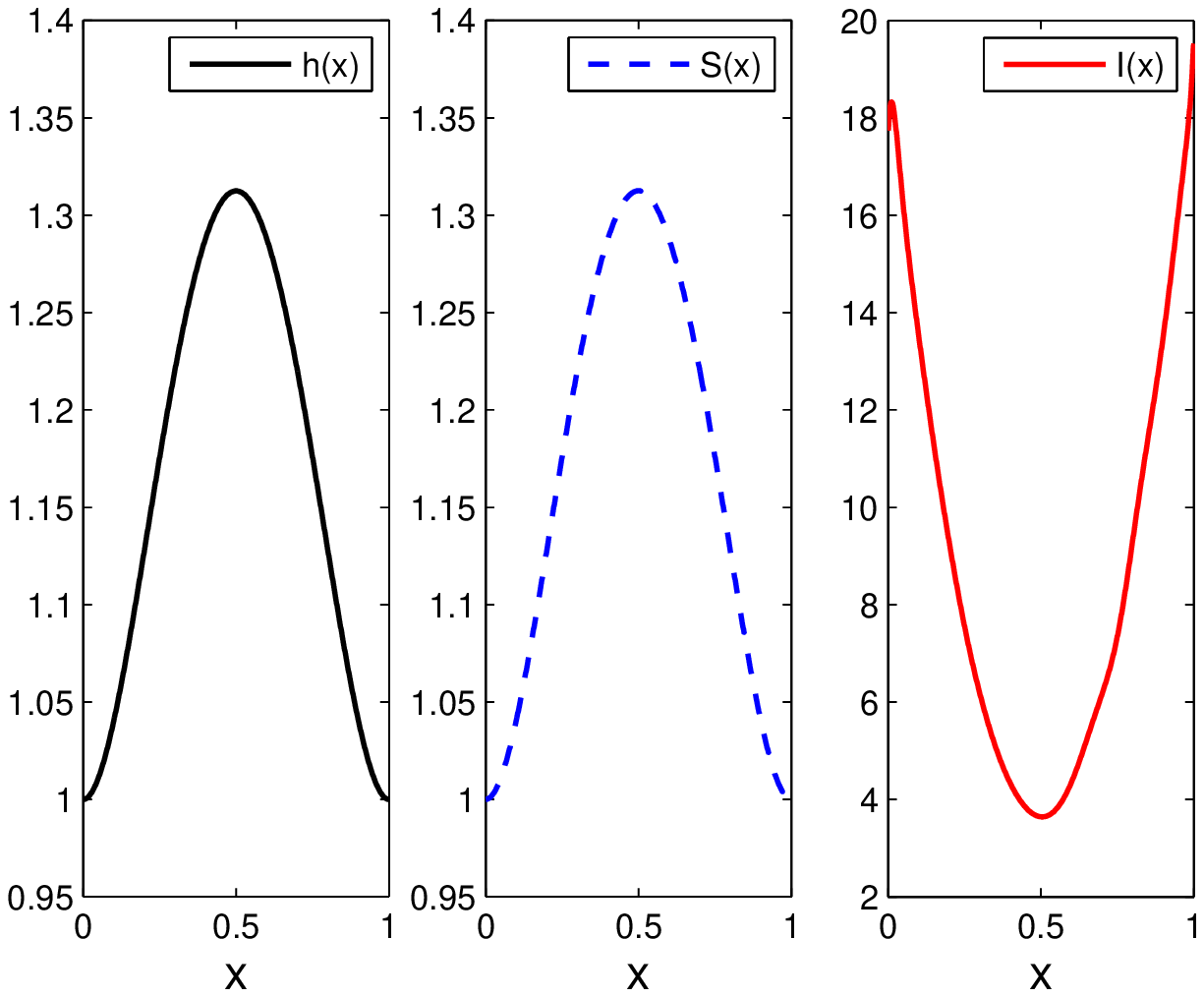}
\includegraphics[width=5.3cm]{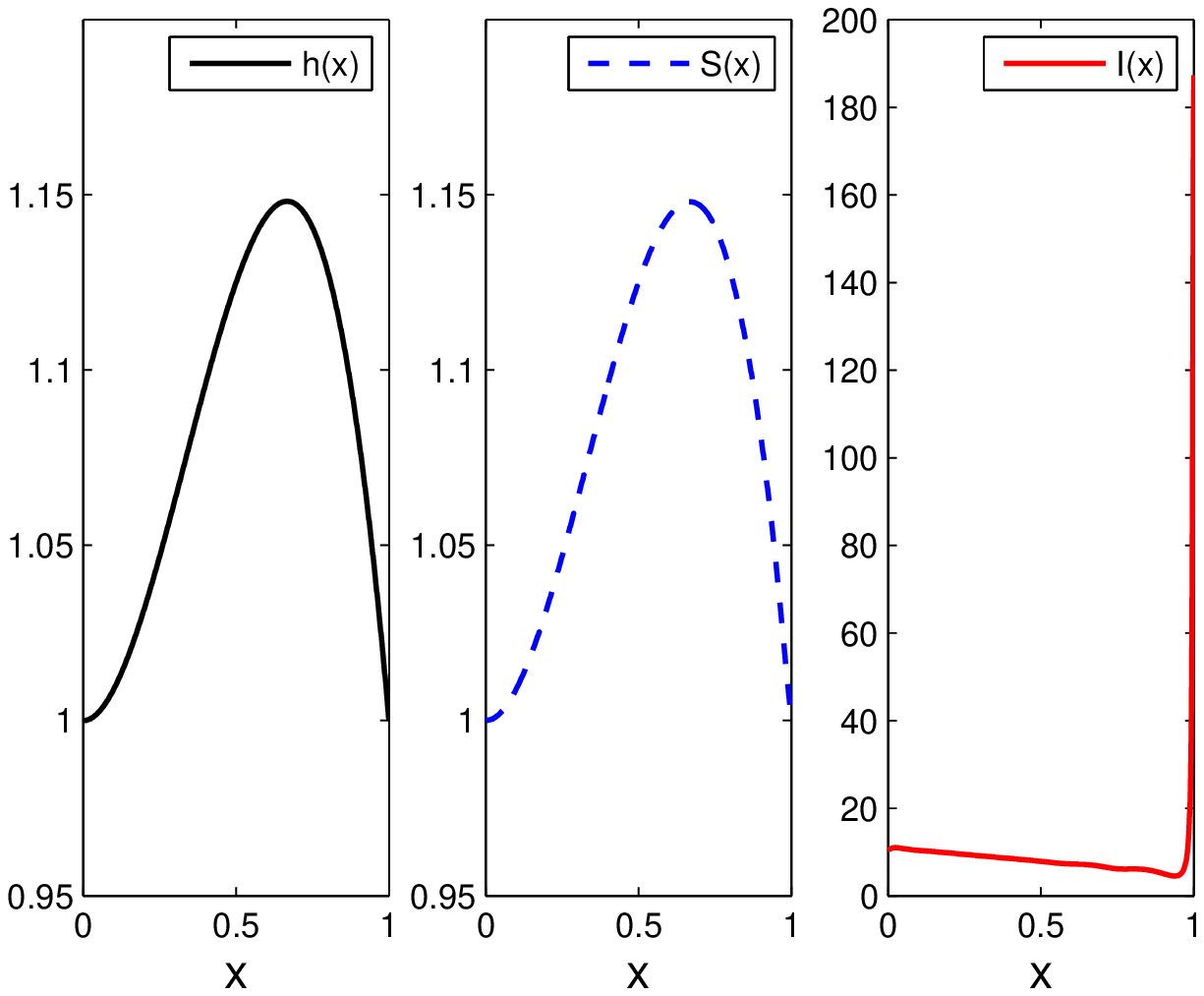}
\includegraphics[width=5.3cm]{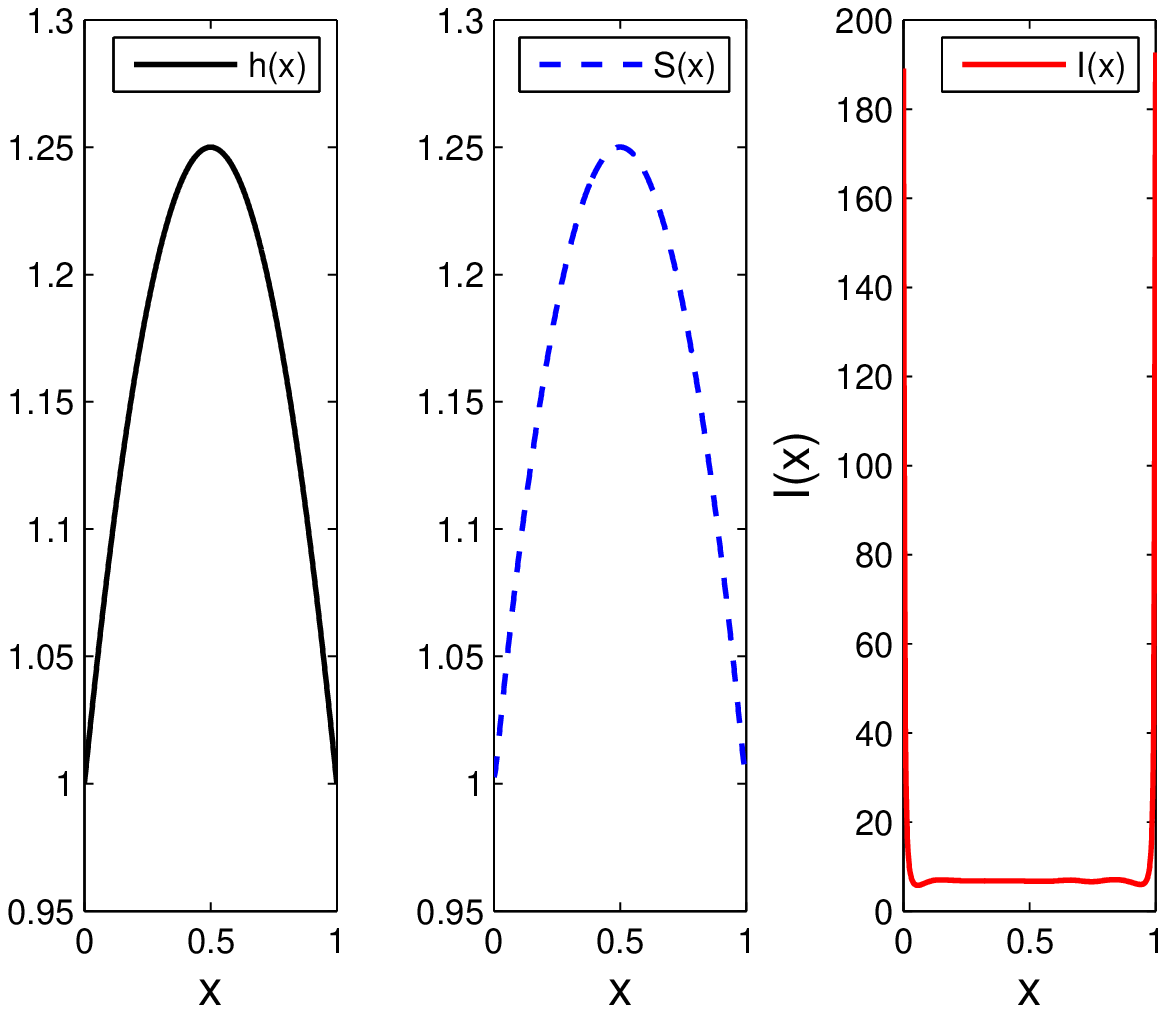}

\small{\ (a) $h(x)=1+5x^2(1-x)^2$ \hspace{1.0cm} (b) $h(x)=1+x^2(1-x)$ \hspace{1.2cm} (c)\ $h(x)=1+x(1-x)$}
\caption{Numerical simulations of the solution profile of  model \eqref{SIS-2}, where $\beta(x)=1+\frac{1}{2}\sin(2\pi x), \eta(x)=1, \gamma(x)=h(x)\beta(x)-\eta(x)$, $d_S=1, d_I=10^{-8},\Lambda=10$. In (a), $h(x)=1+5x^2(1-x)^2$, in (b), $h(x)=1+x^2(1-x)$, and in (c), $h(x)=1+x(1-x)$. }
 \label{fig3}
\end{figure}

Furthermore, when $h\in C^2([0,L])$, in light of Theorem \ref{th2.3}, one can draw the following conclusions concerning the asymptotic profile of $I$.

\vskip6pt
(i)\ For any risk function $h$ satisfying $-d_Sh_{xx}\leq\Lambda-h$ in $(0,L)$, $h_x(0)\geq0,\,h_x(L)\leq0$, and condition \eqref{cond} (for instance, $h<\Lambda$ is a positive constant), the infected population must occupy the entire habitat, and it also forms the concentration phenomenon at the boundary point $x=0$ (or $x=1$) if $h_x(0)>0$ (or $h_x(1)<0$), which is also the highest-risk location; see Theorem \ref{th2.3}(i) and the numerical illustrations in Figure \ref{fig3}(a)-(b)-(c).

\vskip6pt
(ii)\ For any convex risk function $h$ (i.e., $h_{xx}\geq,\not\equiv 0$ on $[0,L]$) fulfilling \eqref{cond}, the infected population usually stays only in part of the habitat. In particular, by  Theorem \ref{th2.3}(ii)(iii),  we can observe the following behaviors.

\vskip6pt
(ii-a)\ If the highest-risk set $\Theta_h$ contains only one point, denoted by $\tau_0$,  then the distribution behavior of the infected population is affected by whether $\tau_0$ is a boundary point or an interior point. More precisely, when $\tau_0$ is an interior point, then the infected population resides in a certain left neighborhood of $\tau_0$, staying away from the boundary points $x=0$ and $x=1$. In fact, such a neighborhood can be calculated through the formula \eqref{th2.3-g}. One may further refer to Figure \ref{fig4}(a).

However, if $\tau_0$ is a boundary point, say $\tau_0=L$, then the infected population stays in a certain neighborhood of $L$ provided $\frac{e^{2Ld_S^{-1/2}}-1}{e^{2Ld_S^{-1/2}}+1}>-\frac{d_S^{1/2}h_x(L)}{\Lambda-h(L)}$, while the infected population concentrates only at $L$ provided $\frac{e^{2Ld_S^{-1/2}}-1}{e^{2Ld_S^{-1/2}}+1}\leq-\frac{d_S^{1/2}h_x(L)}{\Lambda-h(L)}$. Since $h_x(L)\leq0$ in this situation, the infected population stays in a certain neighborhood of $L$ provided for all $d_S>0$ if $h_x(L)=0$. If $h_x(L)<0$, it should be noted that
the function $q(d_S)=d_S^{-1/2}\frac{e^{2Ld_S^{-1/2}}-1}{e^{2Ld_S^{-1/2}}+1}+\frac{h_x(L)}{\Lambda-h(L)}$ deceases in $d_S\in(0,\infty)$, $\lim_{d_S\to0}q(d_S)=\infty$ and $\lim_{d_S\to\infty}q(d_S)=\frac{h_x(L)}{\Lambda-h(L)}<0$. As a result, there is a unique $d_S^*>0$ such that $q(d_S^*)=0$, and in turn the infected population stays in a left neighborhood of $L$ for $0<d_S<d_S^*$ , and the infected population concentrates only at $L$ for all $d_S\geq d_S^*$.

\vskip6pt
(ii-b)\ If the highest-risk set $\Theta_h$ contains only an interval, then the infected population resides in a certain neighborhood of such an interval. Again, such a neighborhood can be calculated through the formula \eqref{th2.3-g}. See the numerical simulation in Figure \ref{fig4}(b).

\vskip6pt
(ii-c) For a general H\"{o}lder continuous risk function $h$, we can conclude that the disease must exist in all isolated highest-risk point(s) and a neighborhood of each highest-risk interval if exists; nevertheless, it is challenging to give a precise characterization for the distribution behavior of the susceptible and infected populations, due to the mathematical difficulties on the analysis of the free boundary problem \eqref{th2.2-1}. We have performed the numerical simulations in Figure \ref{fig5}(a)-(b) as an illustration.

\begin{figure}[h]
\centering
\includegraphics[width=7cm,height=5cm]{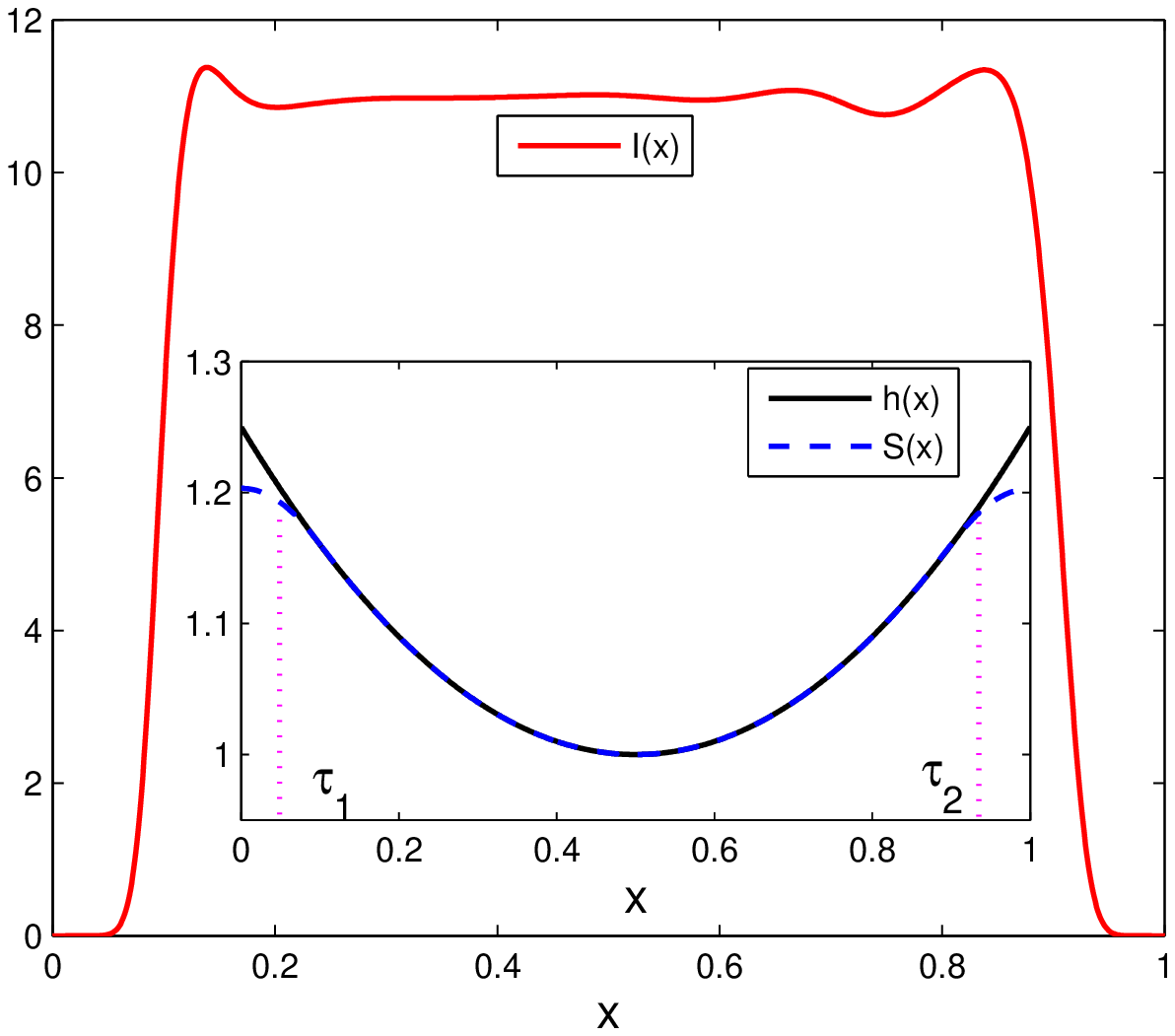}
\includegraphics[width=7cm,height=5cm]{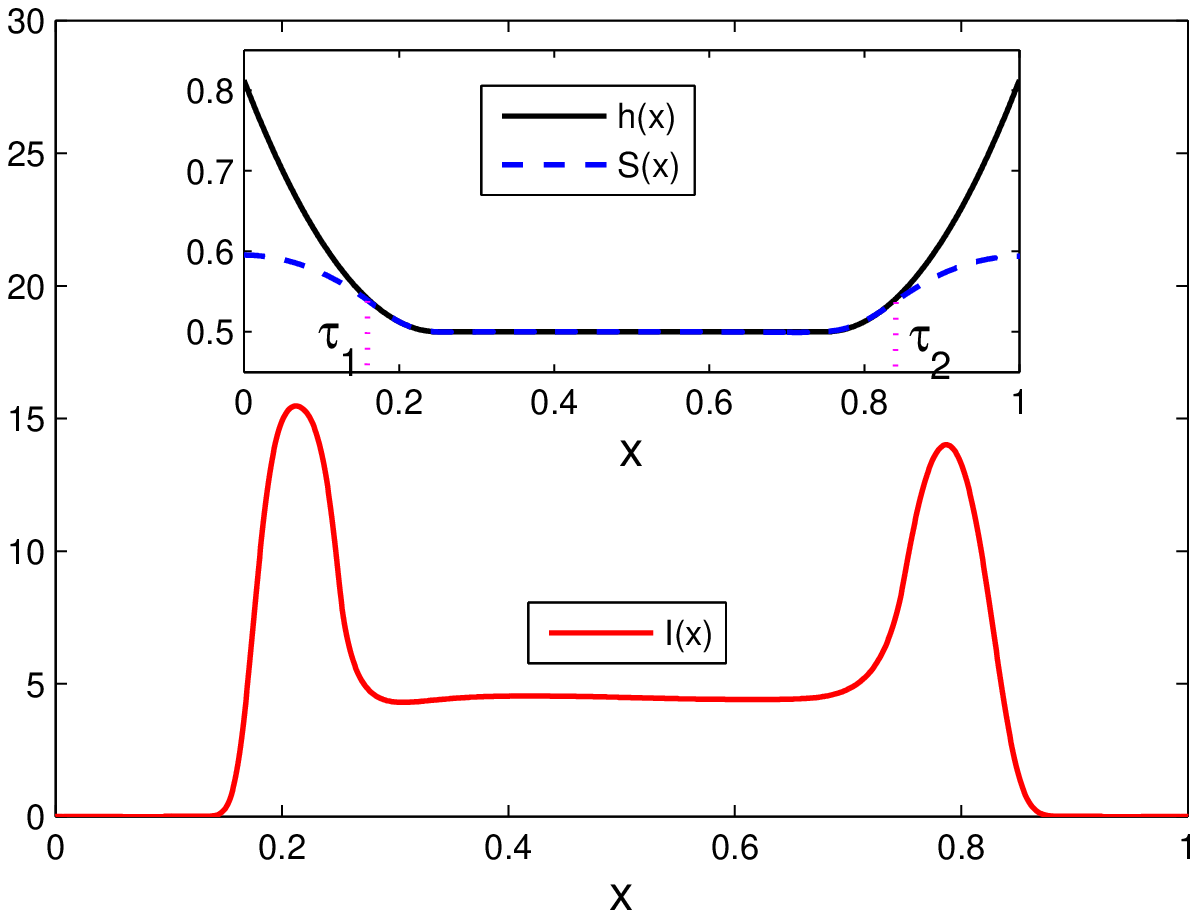}

\small{(a)\ \ $\Theta_h=\{\frac{1}{2}\}$ \hspace{5cm} (b)\ \ $\Theta_h=[\frac{1}{4},\,\frac{3}{4}]$}
\caption{Numerical simulations of the solution profile of  model \eqref{SIS-2}, where $\beta(x)=1+\frac{1}{2}\sin(2\pi x), \eta(x)=1, \gamma(x)=h(x)\beta(x)-\eta(x)$, $d_S=1, d_I=10^{-10},\Lambda=10$, and $h(x)=1+(x-\frac{1}{2})^2$ in (a), while in (b), $h(x)=\frac{1}{2}+5(x-\frac{1}{4})^2,\ 0\leq x<\frac{1}{4};\ h(x)=\frac{1}{2}, \ \frac{1}{4}\leq x< \frac{3}{4};\ h(x)=\frac{1}{2}+5(x-\frac{3}{4})^2, \ \frac{3}{4}\leq x<1$. }
\label{fig4}
\end{figure}

\begin{figure}[h]
\centering
\includegraphics[width=7cm,height=5cm]{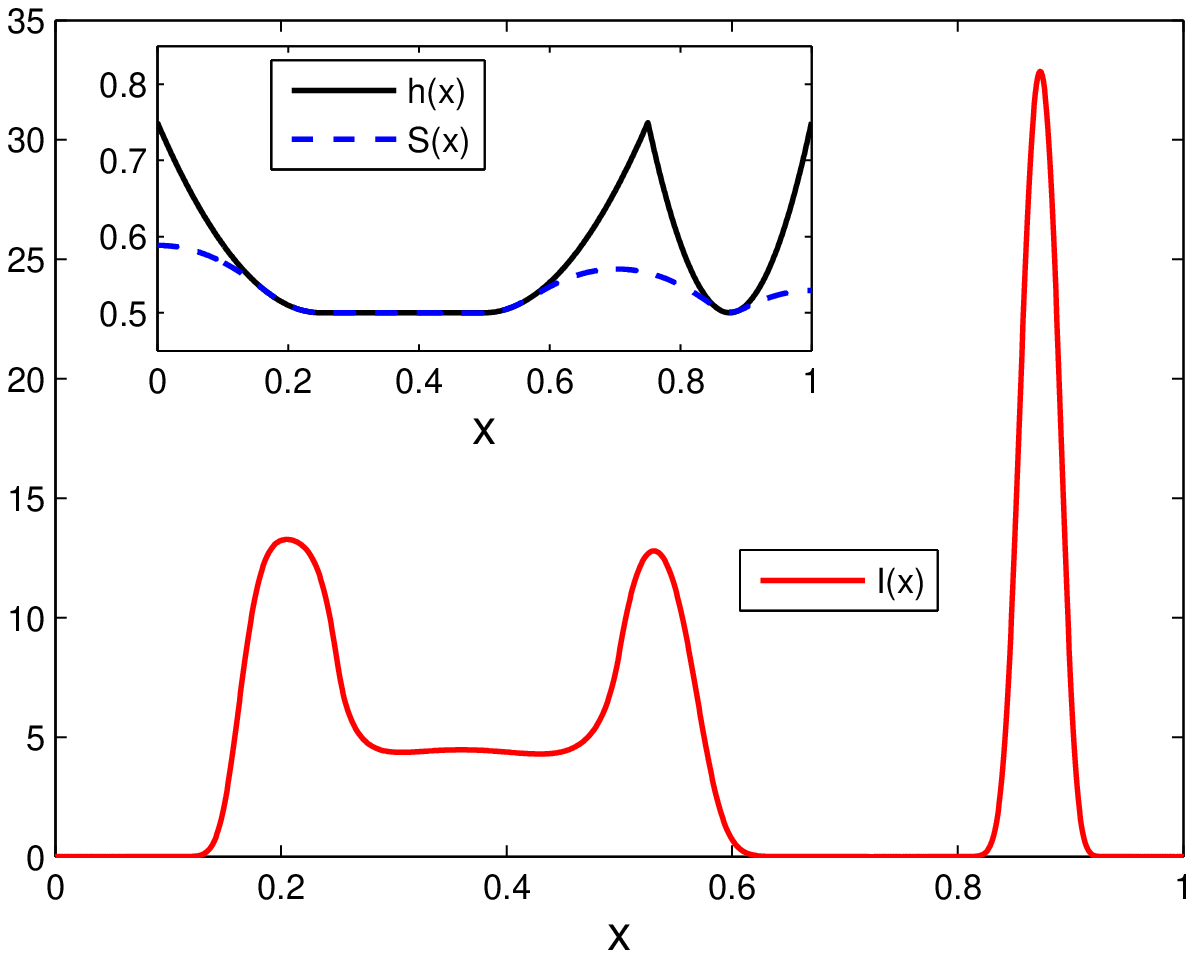}
\includegraphics[width=7cm,height=5cm]{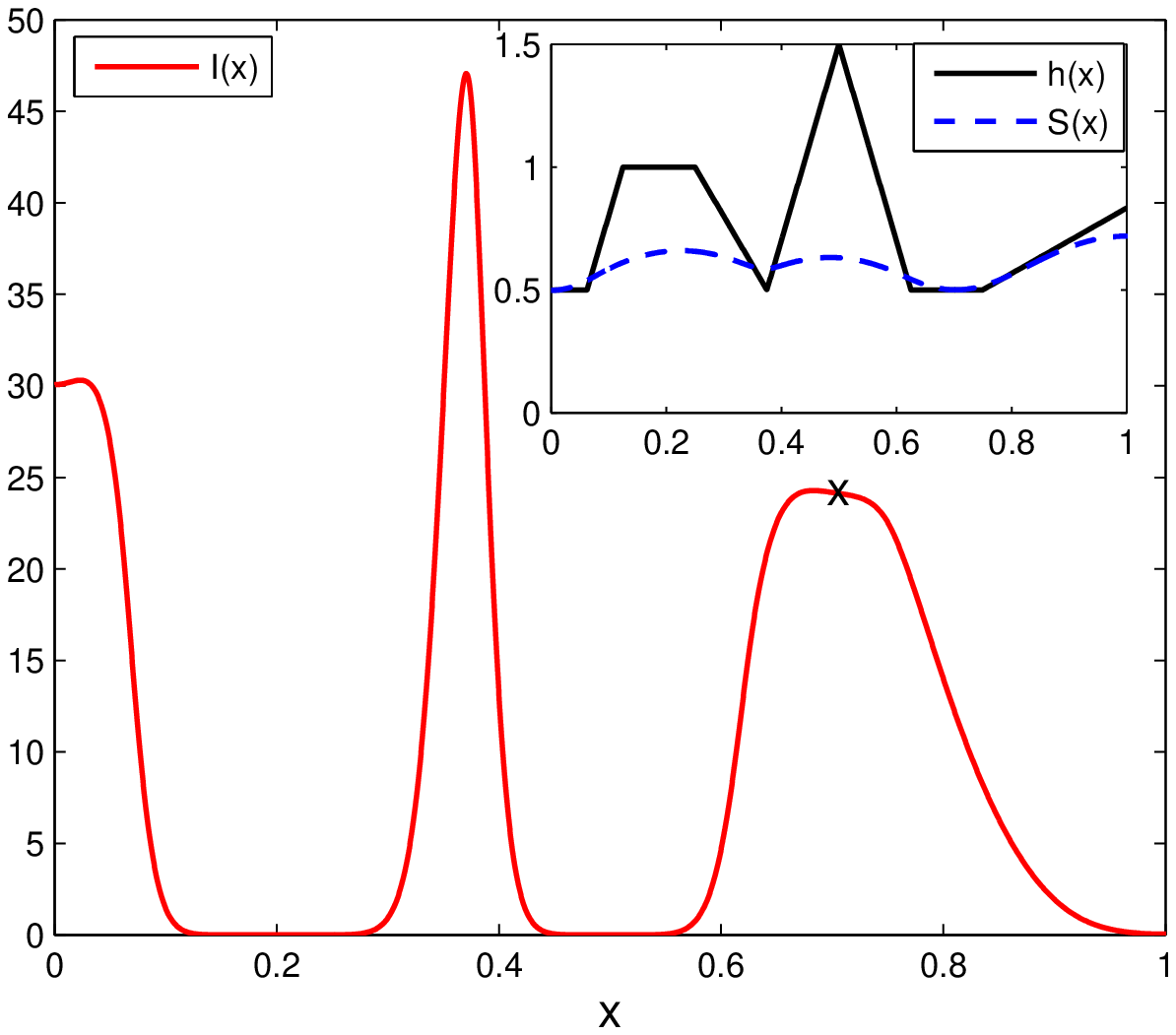}

\small{\hspace{1cm} (a)\ \ $\Theta_h=[\frac{1}{4}, \frac{1}{2}]\cup\{\frac{7}{8}\}$ \hspace{3cm} (b)\ \ $\Theta_h=[0, \frac{1}{16}]\cup\{\frac{3}{8}\}\cup[\frac{5}{8}, \frac{3}{4}]$}
\caption{Numerical simulations of the solution profile of  model \eqref{SIS-2}, where $d_S=1, d_I=10^{-5}, \beta(x)=1+\frac{1}{2}\sin(2\pi x), \eta(x)=1, \gamma(x)=h(x)\beta(x)-\eta(x)$, $\Lambda=10$.  In (a) and (b), $h(x)$ is chosen to be the same as $k(x)$ in Figure \ref{fig2}(b) and Figure \ref{fig2}(c), respectively.}
 \label{fig5}
\end{figure}

\vskip8pt
In what follows, we would like to make some more discussions on (ii-a) above in the case that $\tau_0$ is a boundary point. For example, we take $\tau_0=L$, and also assume that $h_x(L)<0$. On the one hand, by fixing $h_x(L)$, we have known from (ii-b) that large diffusion rate $d_S$ can result in the disease concentration only at the location $L$ and small diffusion rate $d_S$ will cause the disease to distribute in a left neighborhood of $L$. On the other hand, once $d_S$ is fixed,
the concentration phenomenon happens only if $-h_x(L)$ is properly large. This motivates us to see whether a similar concentration phenomenon could occur at an interior isolated highest-risk point if the risk function $h$ is merely H\"{o}lder continuous. To illustrate this phenomenon, let us consider the following risk function whose curve is the connection of two segments:
\begin{equation}\label{f6}
h(x)=\left\{\begin{array}{ll}
a_{1}\left(x-\frac{L}{2}\right)+\frac{\Lambda}{4}, & x \in\left[0, \frac{L}{2}\right], \\[2mm]
a_{2}\left(x-\frac{L}{2}\right)+\frac{\Lambda}{4}, & x \in\left(\frac{L}{2}, L\right],
\end{array}\right.
\end{equation}
with $a_1<0, a_2>0$. Obviously, $h$ is merely Lipschitz continuous at $x=\frac{L}{2}$.
Our numerical simulation results demonstrate that if the slopes $|a_1|,\,a_2$ are properly large, then the infected population will concentrate at $x=\frac{L}{2}$ (Figure \ref{fig6}(a)); if $|a_1|,\,a_2$ are small, then the infected population will aggregate in a neighborhood of $x=\frac{L}{2}$ (Figure \ref{fig6}(b)); and if
$|a_1|$ is small while $a_2$ is large, then the infected population will aggregate in a left-neighborhood of $x=\frac{L}{2}$ (Figure \ref{fig6}(b)). These profiles behave rather differently from that in Theorem \ref{th2.3}(ii) for $h\in C^2([0,L])$, as shown by Figure \ref{fig4}(a). Therefore, the numerical results reveal that the smoothness of $h$ may have a substantial effect on the spatial distribution of the disease.
\begin{figure}[h]
\centering
\includegraphics[width=5.3cm]{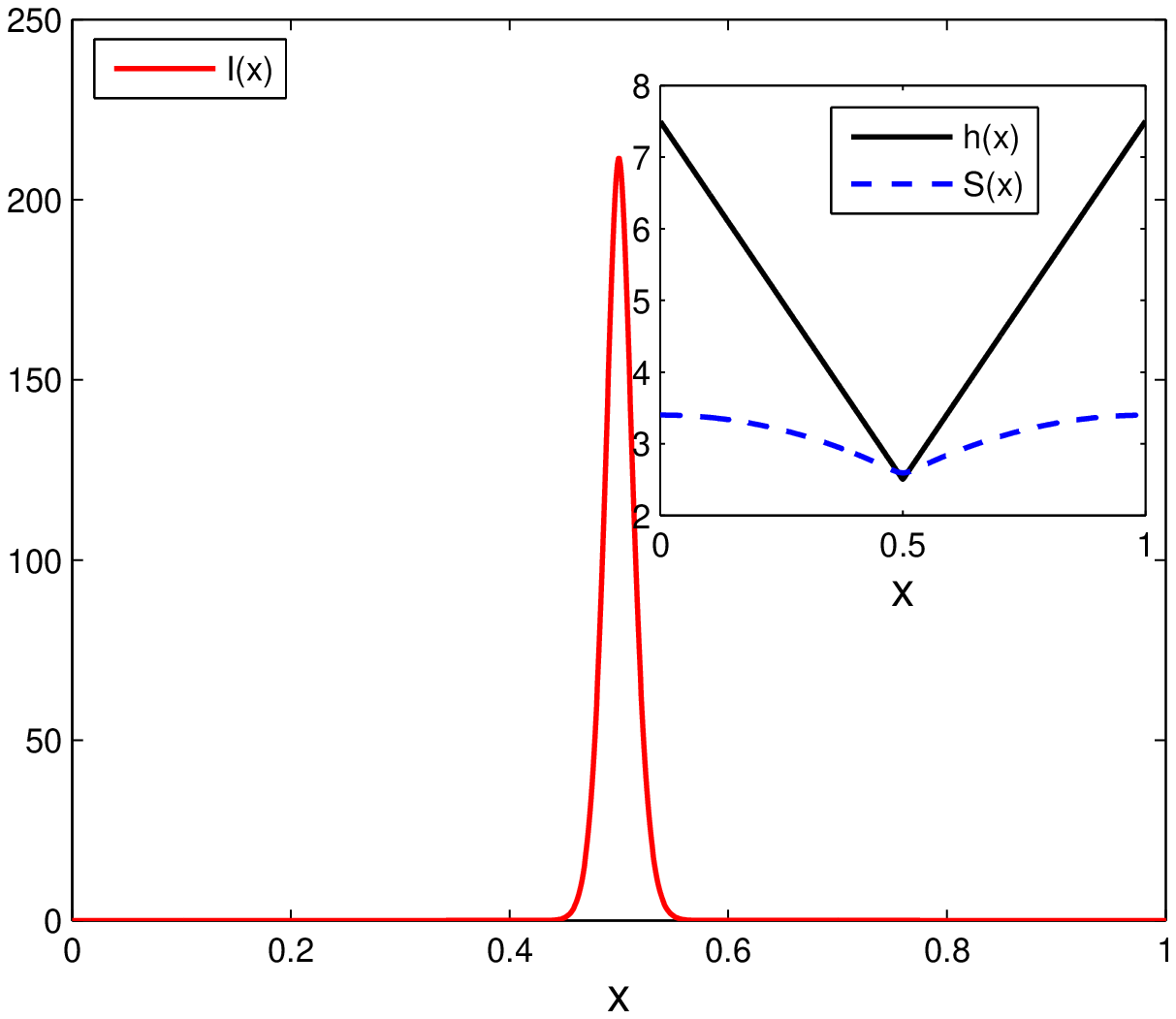}
\includegraphics[width=5.3cm]{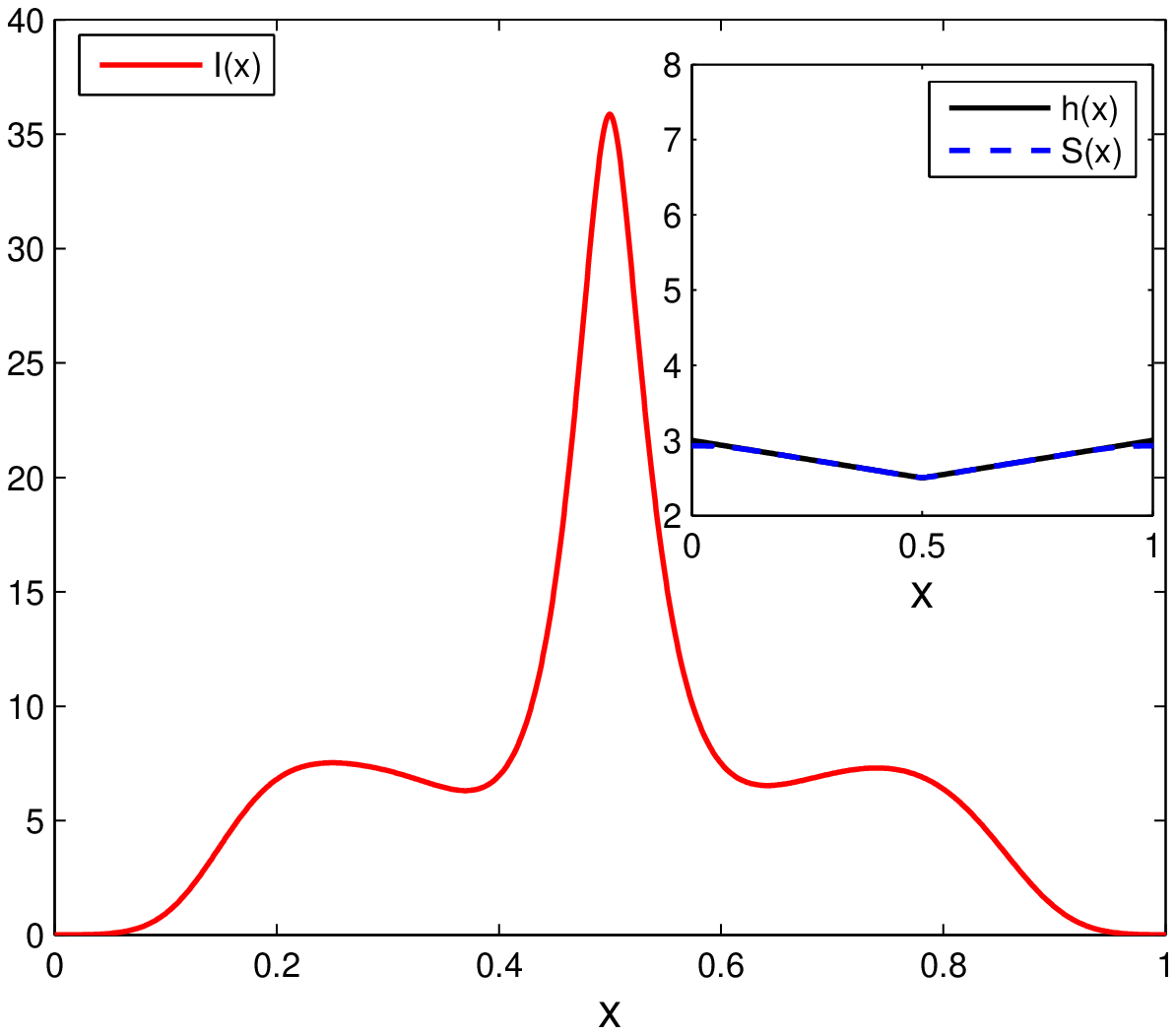}
\includegraphics[width=5.3cm]{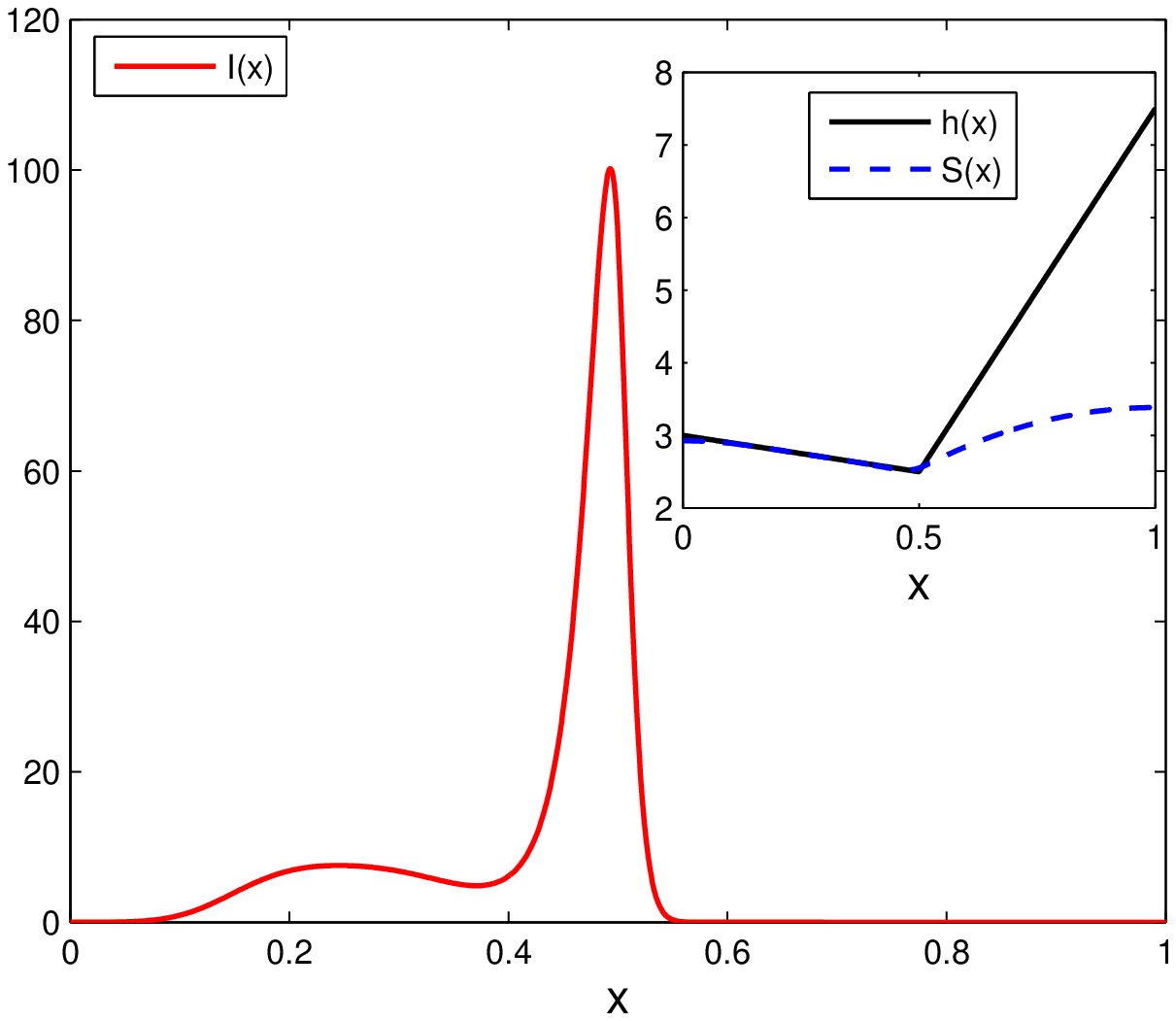}

\small{(a) $a_1=-10,a_2=10$  \hspace{2cm} (b) $a_1=-1, a_2=1$ \hspace{2cm} (c)\ $a_1=-1, a_2=10$}
%
%
\caption{Numerical simulations of the solution profile of  model \eqref{SIS-2}, where $\beta(x)=1+\frac{1}{2}\sin(2\pi x), \eta(x)=1, \gamma(x)=h(x)\beta(x)-\eta(x)$, $L=1, d_S=1, d_I=10^{-5}, \Lambda=10$ and $h(x)$ is given by \eqref{f6}.}
 \label{fig6}
\end{figure}

\subsection{Conclusion.} The discussions in the above two subsections, together with the numerical simulations, show that the spatial profile of the susceptible and infected populations of \eqref{SIS} and \eqref{SIS-2} with respect to small movement rate of the infected individuals are rather different. This is caused by the presence of the recruitment term for the susceptible population and the death rate for the infected population. On the other hand, we would like to mention that the recent works \cite{Cui,CLL,CLPZ,CL,KMP,LZ,ZC} studied various kinds of reaction-diffusion-advection SIS epidemic models, in which the advection term represents some passive movement in a certain direction, e.g., due to external environmental forces such as water flow \cite{Lutscher1, Lutscher2, Lutscher3,PZ2}, wind \cite{Dahmen} and so on. In particular, if an advection is present in \eqref{SIS} and stands for, for instance, the water flow, it was proved in \cite[Theorem 1.4]{CLPZ} that, as $d_I\to0$, the susceptible population converges to a positive function while the infected population concentrates only at the downstream of the water flow; a similar result can be shown to hold for the corresponding system \eqref{SIS-2}. Such a distribution behavior is essentially different from that of \eqref{SIS} and \eqref{SIS-2} with  small $d_I$.

In summary, our results here, combined with those of \cite{CLPZ,KMP,LPW1}, suggest that the recruitment term for the susceptible population, the death rate for the infected population (even the smoothness of the associated risk function) as well as the advection can lead to significant impacts on the disease transmission and thus decision-makers should attach great importance to these factors when taking measures such as the lockdown and quarantine to control the movement or immigration of the infected individuals so as to eliminate the disease infection.

\section{Appendix}

In this appendix, we always let $\Omega$ be a smooth and bounded domain in $\mathbb R^n\,(n\geq1)$.
Given $f\in C(\overline\Omega)$, consider the following eigenvalue problem with Neumann boundary condition:
 \begin{equation}
  \label{eq:1}
\begin{cases}
  -D\Delta \phi+f(x)\phi=\lambda\phi &\hbox{ in }\Omega,\\
  \frac{\partial\phi}{\partial \nu}=0 &\hbox{ on }\partial\Omega,
\end{cases}
\end{equation}
where $\nu(x)$ is the unit exterior normal vector of $\partial\Omega$ at $x$, and the coefficient $D$ is a positive constant.

We start with a well-known fact concerning the asymptotic behavior of the principal eigenvalue of \eqref{eq:1}
with respect to small diffusion; one may refer to, for example, \cite[Lemma 3.1]{LN2006}.

\begin{lem}\lbl{a2.1} Let $\lambda_1(D,f)$ be the principal eigenvalue of \eqref{eq:1}. Then it holds that
 $$
 \lim_{D\to0}\lambda_1(D,f)=\min_{x\in\overline\Omega}f(x).
 $$
\end{lem}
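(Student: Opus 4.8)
The plan is to use the Rayleigh quotient (variational) characterization of the principal eigenvalue. Since the operator in \eqref{eq:1} is self-adjoint under the Neumann boundary condition, its principal eigenvalue admits the representation
$$
\lambda_1(D,f)=\inf_{0\not\equiv\phi\in W^{1,2}(\Omega)}\frac{\int_\Omega\big(D|\nabla\phi|^2+f(x)\phi^2\big)\,{\rm d}x}{\int_\Omega\phi^2\,{\rm d}x},
$$
with the infimum attained by the (positive) principal eigenfunction. I would establish the two inequalities $\liminf_{D\to0}\lambda_1(D,f)\geq\min_{\overline\Omega}f$ and $\limsup_{D\to0}\lambda_1(D,f)\leq\min_{\overline\Omega}f$ separately; together they yield the claim.

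The lower bound is immediate and in fact uniform in $D$: because $D|\nabla\phi|^2\geq0$ and $f(x)\geq\min_{\overline\Omega}f$ pointwise, every admissible $\phi$ satisfies $\int_\Omega(D|\nabla\phi|^2+f\phi^2)\geq(\min_{\overline\Omega}f)\int_\Omega\phi^2$. Dividing by $\int_\Omega\phi^2$ and taking the infimum gives $\lambda_1(D,f)\geq\min_{\overline\Omega}f$ for every $D>0$, hence $\liminf_{D\to0}\lambda_1(D,f)\geq\min_{\overline\Omega}f$.

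For the upper bound, the idea is to test the Rayleigh quotient against a function concentrated near a minimum point. Fix $x_0\in\overline\Omega$ with $f(x_0)=\min_{\overline\Omega}f$, and let $\delta>0$. By continuity of $f$, I choose $\epsilon>0$ so small that $f(x)\leq\min_{\overline\Omega}f+\delta$ on $B_\epsilon(x_0)\cap\overline\Omega$. Taking a fixed nontrivial $\psi\in C_c^\infty(\mathbb{R}^n)$ supported in the unit ball, I set $\phi_\epsilon(x)=\psi\big((x-x_0)/\epsilon\big)$ and restrict it to $\Omega$; since no boundary condition is imposed on test functions (the Neumann condition being natural), $\phi_\epsilon\in W^{1,2}(\Omega)$ is admissible. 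Plugging $\phi_\epsilon$ into the quotient, the potential contribution is bounded by $\min_{\overline\Omega}f+\delta$, while the gradient contribution equals $D\,\int_\Omega|\nabla\phi_\epsilon|^2/\int_\Omega\phi_\epsilon^2$; for this fixed $\epsilon$ the ratio $\int|\nabla\phi_\epsilon|^2/\int\phi_\epsilon^2$ is a finite constant, so the gradient term tends to $0$ as $D\to0$. Consequently $\limsup_{D\to0}\lambda_1(D,f)\leq\min_{\overline\Omega}f+\delta$, and letting $\delta\to0$ completes the argument.

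There is no genuine obstacle here; the only point requiring a little care is the construction of the concentrated test function when $x_0$ lies on $\partial\Omega$. This is precisely where the Neumann setting helps: test functions need not vanish (or satisfy any constraint) on the boundary, so the same bump $\phi_\epsilon$ works after restriction to $\Omega$, and the smoothness of $\partial\Omega$ guarantees $\int_{B_\epsilon(x_0)\cap\Omega}\phi_\epsilon^2\,{\rm d}x>0$, so the quotient is well defined. Alternatively, one may invoke the standard monotonicity and continuity of $\lambda_1(D,f)$ in $D$ together with known asymptotics, but the direct variational computation above is self-contained and elementary.
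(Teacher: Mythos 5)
Your proof is correct. Note, however, that the paper itself does not prove this lemma at all: it is stated as a known fact with a citation to Lou and Nagylaki \cite[Lemma 3.1]{LN2006}, so there is no in-paper argument to compare against. What your write-up supplies is the standard self-contained variational proof: the lower bound $\lambda_1(D,f)\geq\min_{\overline\Omega}f$ is uniform in $D$ and immediate from the Rayleigh quotient, while the upper bound follows by testing with a bump function concentrated near a minimum point of $f$, whose gradient contribution is $O(D)$ for fixed concentration scale. Both steps are sound, and you correctly flag the one genuinely delicate point, namely that when the minimum is attained at a boundary point the bump restricted to $\Omega$ is still an admissible test function (no constraint is imposed on $W^{1,2}(\Omega)$ competitors in the Neumann setting) with nondegenerate $L^2$ norm, since $\Omega$ is open and smooth so $B_\epsilon(x_0)\cap\Omega$ has positive measure. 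The order of limits is also handled properly: $\epsilon$ (hence the constant $\int|\nabla\phi_\epsilon|^2/\int\phi_\epsilon^2$) is fixed before sending $D\to0$, and only then is $\delta\to0$ taken. This argument is essentially the same proof one finds behind the cited result, so your proposal can be viewed as making the paper's appeal to the literature self-contained rather than as a genuinely different route.
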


We next recall the $L^1$-estimate for the weak solution (due to \cite{BS}) of the following linear elliptic problem:
 \bes
 -\Delta w+c(x)w=g \ \ \ \mbox{in}\ \Omega,\ \ \ \
 {{\partial w}\over{\partial\nu}}=0\ \ \mbox{on}\ \partial\Omega.
 \lbl{aa2.1}
 \ees

\begin{lem}\lbl{a2.2} {\rm (a)}\ {\rm (Global estimates)} \ Assume that $c\in L^\infty(\Omega),\ g\in L^1(\Omega)$ and let
$w\in W^{1,1}(\Omega)$ be a weak solution of {\rm(\ref{aa2.1})}. Then, for any $r\in[1, n/{(n-1)})$, we have
$w\in W^{1,r}(\Omega)$  and the following estimate
$$
\|w\|_{W^{1,r}(\Omega)}\le C\|g\|_{L^1(\Omega)},
$$
where the positive constant $C$ is independent of $w$.

{\rm (b)\ (Interior estimates)}\ Assume that $\Omega'\subset\subset\Omega$ is a smooth domain, $c\in L^\infty(\Omega),\ g\in L^1(\Omega)$, and let $w\in W^{1,1}(\Omega)$ be a weak solution to the equation $-\Delta w+c(x)w=g$. Then, for any $r\in[1, n/{(n-1)})$, we have $w\in W^{1,r}(\Omega')$   and the following estimate
 $$\|w\|_{W^{1,r}(\Omega')}\le C\|g\|_{L^1(\Omega)},$$ where the positive constant $C$ is independent of $w$.

 \end{lem}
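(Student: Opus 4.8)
The plan is to prove the estimate by the truncation technique of Boccardo--Gallou\"et, which is the quantitative route underlying the $L^1$-theory of \cite{BS}. Since existence of the weak solution $w\in W^{1,1}(\Omega)$ is assumed, I only need the a priori bound, and the strategy is: first control $w$ in $L^1$, then test the equation against truncations of $w$ to gain control of $\nabla w$ on the sublevel sets $\{|w|\le k\}$, convert this into distribution-function (Marcinkiewicz) estimates for $w$ and $\nabla w$, and finally invoke the embedding of weak Lebesgue spaces into $L^r$ on the bounded domain $\Omega$. I treat the representative case $n\ge3$; for $n=1,2$ the sharper Sobolev embeddings make the conclusion more direct.

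First I would establish $\|w\|_{L^1(\Omega)}\le C\|g\|_{L^1(\Omega)}$. Writing $T_k(s)=\max\{-k,\min\{s,k\}\}$ for the truncation at height $k$, I test the weak form of \eqref{aa2.1} with $T_k(w)/k$; since $\partial w/\partial\nu=0$ there is no boundary contribution, and letting $k\to0^+$ (so that $T_k(w)/k\to\mathrm{sign}(w)$ a.e. on $\{w\ne0\}$) the nonnegative Dirichlet energy is discarded and one is left with $\int_\Omega c(x)|w|\,{\rm d}x\le\|g\|_{L^1(\Omega)}$. Under the coercivity that makes \eqref{aa2.1} solvable, this yields the desired $L^1$ bound.

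Next, testing the weak form with $T_k(w)$ itself gives $\|\nabla T_k(w)\|_{L^2(\Omega)}^2=\int_\Omega g\,T_k(w)-\int_\Omega c\,w\,T_k(w)\le k\big(\|g\|_{L^1}+\|c\|_{L^\infty}\|w\|_{L^1}\big)\le Ck\|g\|_{L^1(\Omega)}$, the last bound using the $L^1$ control just obtained; note that $wT_k(w)\ge0$ and $|T_k(w)|\le k$ are what make the zeroth-order term harmless. Applying the Sobolev--Poincar\'e inequality to $T_k(w)$ (absorbing its mean through the $L^1$ bound, which is the point where the Neumann condition must be handled) I get $\mathrm{meas}\{|w|>k\}\le Ck^{-n/(n-2)}$, i.e. $w\in L^{n/(n-2),\infty}(\Omega)$. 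For the gradient I split $\{|\nabla w|>\lambda\}\subset\{|\nabla w|>\lambda,\,|w|\le k\}\cup\{|w|>k\}$, bound the first set by $\lambda^{-2}\|\nabla T_k(w)\|_{L^2}^2\le C\lambda^{-2}k\|g\|_{L^1}$ via Chebyshev and the second by the previous estimate, and then optimize in $k$ to reach $\mathrm{meas}\{|\nabla w|>\lambda\}\le C\lambda^{-n/(n-1)}$, that is $\nabla w\in L^{n/(n-1),\infty}(\Omega)$. Since $\Omega$ is bounded, weak-$L^{n/(n-1)}$ embeds into $L^r$ for every $r<n/(n-1)$, which delivers $\|w\|_{W^{1,r}(\Omega)}\le C\|g\|_{L^1(\Omega)}$ and proves (a). For the interior estimate (b), I would rerun the computation with the test function $\eta^2T_k(w)$ for a cutoff $\eta\in C_c^\infty(\Omega)$ with $\eta\equiv1$ on $\Omega'$; the cross term $\int_\Omega2\eta\,\nabla\eta\cdot\nabla S_k(w)$, with $S_k(s)=\int_0^sT_k(t)\,{\rm d}t$, is integrated by parts and bounded by $Ck\|w\|_{L^1(\Omega)}$ since $|S_k(w)|\le k|w|$, after which the localized Marcinkiewicz argument on $\Omega'$ goes through as before.

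I expect the main obstacle to be the zeroth-order term $c(x)w$ together with the Neumann boundary condition. Unlike the homogeneous Dirichlet case, coercivity of $-\Delta+c$ is not automatic, so the very first $L^1$ bound is delicate when $c$ is not bounded below by a positive constant, and the Sobolev--Poincar\'e step produces a mean-value term that must be reabsorbed using that same $L^1$ bound; this forces the estimates to be carried out strictly in the order $L^1\Rightarrow$ energy on sublevel sets $\Rightarrow$ Marcinkiewicz, since each step feeds the next. The remaining technical care lies in the optimization in $k$ that produces the sharp exponent $n/(n-1)$, which is precisely the threshold at which $\nabla w$ fails to lie in $L^{n/(n-1)}$ itself, explaining the strict inequality $r<n/(n-1)$ in the statement.
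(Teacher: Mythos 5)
The paper does not actually prove Lemma \ref{a2.2}: it is recalled as a known fact and attributed to \cite{BS}, where the estimate is obtained by a duality argument (testing against solutions of the adjoint problem with bounded data). Your truncation scheme in the style of Boccardo--Gallou\"{e}t is therefore a genuinely different, self-contained route, and its quantitative core is right: the sublevel-set energy bound $\|\nabla T_k(w)\|_{L^2(\Omega)}^2\le Ck\|g\|_{L^1(\Omega)}$, the resulting weak-$L^{n/(n-2)}$ bound for $w$ and weak-$L^{n/(n-1)}$ bound for $\nabla w$, and the optimization in $k$ that produces the exponent $n/(n-1)$ are the correct steps, and they make transparent why the restriction $r<n/(n-1)$ is sharp.

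There are, however, three genuine gaps. First, for a solution that is merely in $W^{1,1}(\Omega)$ you may not test the equation with $T_k(w)$ or $\eta^2T_k(w)$: the weak formulation is available for Lipschitz test functions, and extending it to $T_k(w)$ presupposes $\int_{\{|w|<k\}}|\nabla w|^2<\infty$, which is not known a priori. The standard repair is to prove the uniform estimate for approximating problems with truncated data $T_m(g)\in L^\infty(\Omega)$ (whose solutions lie in $H^1(\Omega)$ by Lax--Milgram), pass to the limit, and then identify the limit with $w$ by a uniqueness argument for $W^{1,1}$ solutions; none of this scaffolding appears in your write-up. Second, your opening $L^1$ bound needs $c\ge c_0>0$ (your ``coercivity that makes \eqref{aa2.1} solvable''), which is not among the stated hypotheses; what your argument actually establishes is a corrected statement, valid either under such coercivity or in the form $\|w\|_{W^{1,r}(\Omega)}\le C\bigl(\|g\|_{L^1(\Omega)}+\|w\|_{L^1(\Omega)}\bigr)$. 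To be fair, the lemma as printed is false without some such correction (take $c\equiv0$, $g\equiv0$ and $w$ a nonzero constant), and the corrected version is what the paper actually uses, e.g.\ in \eqref{3.8}, where $S$ is already uniformly bounded; but you should have said explicitly that you are proving a repaired statement rather than the literal one. Third, the same issue is fatal for part (b) as you present it: with no boundary condition there is no coercivity at all, and nonzero harmonic functions ($c\equiv0$, $g\equiv0$) show that $\|w\|_{W^{1,r}(\Omega')}\le C\|g\|_{L^1(\Omega)}$ cannot hold; your claim that the localized argument ``goes through as before'' silently borrows the $L^1$ bound on $w$ that was derived from the Neumann-problem coercivity in part (a), which is unavailable in the interior setting, so the interior conclusion must carry $\|w\|_{L^1(\Omega)}$ on the right-hand side.
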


At last, we state a Harnack-type inequality for weak solutions (see, e.g.,
\cite{Lie} or \cite{PSW}), whose strong form was obtained in \cite{LNT}.

\begin{lem}\lbl{a2.3} {\rm (a)}\ {\rm (Global Harnack inequality)} \ Let $c\in
L^r(\Omega)$ for some $r>n/2$. If $w\in W^{1,2}(\Omega)$ is a
non-negative weak solution of the boundary value problem
 \bes
 -\Delta w+c(x)w=0\ \ \mbox{in}\ \Omega,\ \ \ \ \
 {{\partial w}\over{\partial\nu}}=0\ \ \mbox{on}\ \partial\Omega,
 \nonumber
 \ees
then there is a constant $C$, determined only by $\|c\|_r,\,r$ and
$\Omega$ such that $$\sup_\Omega\,w\leq C\inf_\Omega\,w.$$

{\rm (b)}\ {\rm (Local Harnack inequality)} \ Let $\Omega'\subset\subset\Omega$ be a smooth domain and $c\in
L^r(\Omega)$ for some $r>n/2$. If $w\in W^{1,2}(\Omega)$ is a
non-negative weak solution of the equation $-\Delta w+c(x)w=0$,
then there is a constant $C$, determined only by $\|c\|_r,\,r,\,\Omega$ and
$\Omega'$, such that $$\sup_{\Omega'}\,w\leq C\inf_{\Omega'}\,w.$$

\end{lem}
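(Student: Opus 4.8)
The plan is to establish the local statement (b) first by the De Giorgi--Nash--Moser iteration scheme, and then to deduce the global statement (a) from (b) by a boundary-reflection argument combined with a finite covering (Harnack chain) over the compact connected set $\overline\Omega$. Throughout, I read the equation $-\Delta w+c(x)w=0$ as $\Delta w=c(x)w$, so that $c\,w$ is treated as a zeroth-order datum that must be absorbed into the Dirichlet energy; the hypothesis $r>n/2$ is precisely what makes this absorption possible, and it is the reason the constants end up depending only on $\|c\|_r,r,\Omega,\Omega'$.

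For (b), fix concentric balls $B_\rho(x_0)\subset B_{2\rho}(x_0)\subset\subset\Omega$ and set $\bar w=w+\epsilon$ with $\epsilon>0$. Testing the weak formulation with $\eta^2\bar w^{2\beta+1}$ (after the usual truncation $w_M=\min\{w,M\}$ to legitimize the test functions, then letting $M\to\infty$), where $\eta$ is a standard cutoff, and iterating over shrinking radii yields the subsolution bound
\[
\sup_{B_\rho} w \le C\Big(\rho^{-n}\int_{B_{2\rho}} w^{p}\,{\rm d}x\Big)^{1/p},\qquad p>0 .
\]
The zeroth-order contribution $\int c\,\eta^2\bar w^{2\beta+2}$ is handled by H\"older's inequality as $\|c\|_{L^r}\,\|\eta\bar w^{\beta+1}\|_{L^{2r'}}^2$; since $r>n/2$ gives $2r'=2r/(r-1)<2^\ast=2n/(n-2)$, the Sobolev embedding $W^{1,2}\hookrightarrow L^{2^\ast}$ together with interpolation lets me absorb this term into the gradient energy at the cost of constants depending only on $\|c\|_r,r,\rho,n$. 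Testing instead with negative exponents $\eta^2\bar w^{-2\beta-1}$ and iterating gives the companion lower bound
\[
\inf_{B_\rho} w \ge c\,\Big(\rho^{-n}\int_{B_{2\rho}} w^{-p}\,{\rm d}x\Big)^{-1/p},\qquad p>0 .
\]

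The decisive step, which I expect to be the main obstacle, is bridging these two one-sided estimates across $p=0$. For this I would test the equation with $\eta^2/\bar w$ to obtain a Caccioppoli inequality for $v=\log\bar w$, namely $\int\eta^2|\nabla v|^2\le C\int(|\nabla\eta|^2+c\,\eta^2)$, which shows $v\in\mathrm{BMO}(B_{2\rho})$ with seminorm controlled by $\|c\|_r,r,n$. The John--Nirenberg inequality then furnishes a fixed exponent $p_0>0$ and a constant with
\[
\Big(\frac{1}{|B_{2\rho}|}\int_{B_{2\rho}} w^{p_0}\,{\rm d}x\Big)\Big(\frac{1}{|B_{2\rho}|}\int_{B_{2\rho}} w^{-p_0}\,{\rm d}x\Big)\le C .
\]
Chaining the three displays with $p=p_0$ and letting $\epsilon\to0$ yields $\sup_{B_\rho}w\le C\inf_{B_\rho}w$. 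A finite covering of $\Omega'\subset\subset\Omega$ by such balls whose concentric doubles remain in $\Omega$, joined by a Harnack chain in the connected set, upgrades the one-ball inequality to $\sup_{\Omega'}w\le C\inf_{\Omega'}w$, proving (b).

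For (a), the only new difficulty is the Neumann boundary. Near a boundary point I would flatten $\partial\Omega$ by a smooth diffeomorphism (available since $\Omega$ is smooth) and even-reflect $w$ across the flattened boundary; because $\partial w/\partial\nu=0$, the reflected function is a weak solution of an equation of the same divergence structure on a full interior ball, whose reflected coefficient is again in $L^r$ with comparable norm. Applying the interior Harnack inequality of (b) to the reflection then yields a boundary Harnack inequality on half-balls centered on $\partial\Omega$. Finally, since $\overline\Omega$ is compact and connected, I would cover it by finitely many interior balls and boundary half-balls and connect any two points by a finite chain of overlapping such sets; the product of the finitely many local constants gives a single global constant $C$ depending only on $\|c\|_r,r$ and $\Omega$, which establishes $\sup_\Omega w\le C\inf_\Omega w$ and completes (a).
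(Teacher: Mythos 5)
Your proposal is correct in substance, but note that the paper does not prove this lemma at all: it is stated in the appendix as a known fact, with the reader referred to the literature (Lieberman; Peng--Shi--Wang; and, for the strong form, Lin--Ni--Takagi). What you have written is essentially a reconstruction of the classical De Giorgi--Nash--Moser proof that those references contain: the positive- and negative-exponent Moser iterations, the $\log w$ Caccioppoli estimate feeding into John--Nirenberg to cross $p=0$, and a Harnack chain --- this is exactly the standard architecture, and your absorption step is where the hypothesis $r>n/2$ enters, via the strict inequality $2r/(r-1)<2n/(n-2)$, precisely as it should. Two minor points deserve care. First, your use of $2^\ast=2n/(n-2)$ presumes $n\geq3$; for $n=1,2$ (and the paper applies the lemma with $n=1$) one instead uses $W^{1,2}\hookrightarrow L^q$ for every finite $q$ (or $L^\infty$ when $n=1$), and $r>n/2$ still leaves the needed room, so the constants retain the stated dependence. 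Second, in part (a) your reflection step has a small mismatch: after flattening $\partial\Omega$ the principal part is no longer the Laplacian but a variable-coefficient divergence-form operator $-\mathrm{div}(A\nabla\cdot)$, so you cannot invoke statement (b) verbatim for the reflected function. The fix is routine --- either observe that your entire Moser scheme goes through unchanged for $-\mathrm{div}(A\nabla w)+cw=0$ with bounded measurable uniformly elliptic $A$ (and the even/odd reflection of $A$ stays in this class, with the conormal condition guaranteeing the reflected function is a weak solution across the interface), or avoid reflection altogether by iterating directly on boundary half-balls, exploiting that Neumann test functions need not vanish on $\partial\Omega$ --- but as written the deduction of (a) from (b) is not literally licensed. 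Neither point is a genuine gap; with these adjustments your argument is a complete and correct proof of a result the paper simply cites.
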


\end {document}